\documentclass[a4paper,10pt,reqno,draft]{amsart}


\usepackage{amsthm, amssymb, amsmath, latexsym}
\usepackage{amsfonts, mathrsfs, color}
\usepackage{a4wide}
\usepackage{listings}
\usepackage{color}
\usepackage[usenames,dvipsnames]{xcolor}
\usepackage{rotating}
\usepackage{hhline}
\usepackage{subfigure}
\usepackage{multirow}
\usepackage{enumerate}
\usepackage[bookmarks]{}
\usepackage{amsfonts}   
\usepackage{dsfont}
\usepackage{tikz}
\usepackage{accents}


 \theoremstyle{plain}
 \begingroup
 \newtheorem{thm}{Theorem}[section]
 \newtheorem{lem}[thm]{Lemma}
 \newtheorem{prop}[thm]{Proposition}
 \newtheorem{cor}[thm]{Corollary}
 \endgroup

 \theoremstyle{definition}
 \begingroup
 \newtheorem{defn}[thm]{Definition}
 
 \endgroup

 \theoremstyle{remark}
 \begingroup
 \newtheorem{rem}[thm]{Remark}
 \endgroup

 \numberwithin{equation}{section}


\mathchardef\emptyset="001F


\newcommand{\hs}{{\mathcal H}}

\newcommand{\ds}{\displaystyle}

\newcommand{\G}{{\mathcal G}}
\newcommand{\dx}{\,dx}

\newcommand{\I}{\mathscr{I}}
\newcommand{\B}{\mathscr{B}}
\newcommand{\T}{\mathcal{T}}
\newcommand{\mI}{\mathcal{I}}


\newcommand{\R}{{\mathbb R}}
\newcommand{\Z}{{\mathbb Z}}
\newcommand{\Q}{{\mathbb Q}}
\newcommand{\N}{{\mathbb N}}
\newcommand{\Sph}{{\mathbb S}}



\newcommand{\Om}{\Omega}



\newcommand{\e}{\varepsilon}

\newcommand{\om}{\omega}
\newcommand{\ie}{{; \it i.e., }}





\newcommand{\A}{\mathscr{A}}



\title[]
{Stochastic Homogenisation of Free-Discontinuity Problems}

\author[]{FILIPPO CAGNETTI}
\address[]{Department of Mathematics,
University of Sussex, Brighton, United Kingdom}
\email[]{F.Cagnetti@sussex.ac.uk}
\author[]{GIANNI DAL MASO}
\address[]{SISSA, Trieste, Italy}
\email[]{dalmaso@sissa.it}
\author[]{LUCIA SCARDIA}
\address[]{Department of Mathematical Sciences, University of Bath, United Kingdom}
\email[]{L.Scardia@bath.ac.uk}
\author[]{CATERINA IDA ZEPPIERI}
\address[]{Angewandte Mathematik,
WWU M\"unster, Germany}
\email[]{caterina.zeppieri@uni-muenster.de}
\begin{document}


\begin{abstract}
In this paper we study the stochastic homogenisation of \emph{free-discontinuity functionals}.  
Assuming stationarity for the random volume and surface integrands, we prove the existence of a homogenised random free-discontinuity functional, which is deterministic in the ergodic case. Moreover, by establishing a connection between the deterministic convergence of the functionals at any fixed realisation and the pointwise Subadditive Ergodic Theorem by Akcoglou and Krengel, we characterise the limit volume and surface integrands in terms of asymptotic cell formulas.
\end{abstract}

\maketitle

{\small
\keywords{\textbf{Keywords:} Subadditive Ergodic Theorem, stochastic homogenisation, free-discontinuity problems, $\Gamma$-convergence.

\medskip

\subjclass{\textbf{MSC 2010:} 
49J45, 
49Q20,  
74Q05,  
74E30, 
60K35. 
}

\bigskip

\section{Introduction}

In this article we prove a stochastic homogenisation result for sequences of \emph{free-discontinuity functionals} of the form 
\begin{equation}\label{Intro:funct-e}
E_\e(\omega)(u)= \int_A f\left(\omega,\frac{x}{\e},\nabla u\right)\dx+\int_{S_u\cap A}g\left(\omega,\frac{x}{\e},u^+-u^-,\nu_u\right)\,d\mathcal H^{n-1},
\end{equation}
where $f$ and $g$ are random integrands, $\om$ is the random parameter, 
and $\e>0$ is a small scale parameter. The functionals $E_\e$ are defined in the space $SBV(A, \R^m)$ of special
 $\R^m$-valued  functions of bounded variation on the open set  $A \subset \R^n$. This space was introduced
 by De Giorgi and Ambrosio in \cite{DGA} to deal with \emph{deterministic} problems -  \emph{e.g.} in fracture mechanics, image segmentation, or in the study of liquid crystals - where the variable $u$ can have discontinuities on a hypersurface which is not known a priori, hence the name free-discontinuity functionals \cite{DG}. In \eqref{Intro:funct-e}, $S_u$ denotes the 
discontinuity set of $u$, $u^+$ and $u^-$ are the ``traces'' of $u$ on both sides of $S_u$, $\nu_u$ denotes the (generalised) normal to $S_u$, and $\nabla u$ denotes the approximate differential of $u$.
\smallskip

Our main result is that, in the macroscopic limit $\e\to 0$, the functionals $E_\e$ homogenise to a stochastic \emph{free-discontinuity functional} of the same form, under the assumption that $f$ and $g$ are stationary with respect to $\om$, and that each of the realisations  $f(\om,\cdot,\cdot)$  and  $g(\om, \cdot,\cdot,\cdot)$ satisfies the hypotheses considered  in the deterministic  case studied in  \cite{CDMSZ} (see Section \ref{S:setting} for details). Moreover, we show that under the additional assumption of ergodicity of $f$ and $g$ the homogenised
limit of $E_\e$ is deterministic. Therefore, our qualitative homogenisation result extends to the $SBV$-setting the classical qualitative results by Papanicolaou and Varadhan \cite{PV1,PV2}, Kozlov \cite{Koz1}, and Dal Maso and Modica \cite{DMM1,DMM2}, which were formulated in the more regular Sobolev setting.

\subsection{A brief literature review.} The study of variational limits of \emph{random free-discontinuity} functionals is very much at its infancy. To date, the only available results are limited to the special case of discrete energies of spin systems  \cite{Alciru, BP}, where the authors consider purely \textit{surface} integrals, and $u$ is defined on a discrete lattice and takes values in $\{\pm1\}$.

In the case of \emph{volume} functionals in Sobolev spaces, classical qualitative results are provided by the work by Papanicolaou and Varadhan \cite{PV1,PV2} and Kozlov \cite{Koz1} in the linear case, and by Dal Maso and Modica \cite{DMM1,DMM2} in the nonlinear setting. 
The need to develop efficient methods to determine the homogenised coefficients and to estimate the error in the homogenisation approximation, has recently motivated an intense effort to build a quantitative theory of stochastic homogenisation in the regular Sobolev case.

The first results in this direction are due to Gloria and Otto in the discrete setting \cite{GO1, GO2}. In the continuous setting, quantitative estimates
for the convergence results
are given by Armstrong and Smart \cite{AS}, who also study the regularity of the minimisers, and by Armstrong, Kuusi, and Mourrat \cite{AKM1, AKM2}. We also mention \cite{AM}, where Armstrong and Mourrat give Lipschitz regularity for the solutions of elliptic equations with random coefficients, by directly studying certain functionals
that are minimised by the solutions. 

\smallskip

The mathematical theory of \emph{deterministic homogenisation} of free-discontinuity problems is well established. When $f$ and $g$ are \textit{periodic} in the spatial variable, the limit behaviour of $E_\e$ can be determined by classical homogenisation theory. In this case, under mild assumptions on $f$ and $g$, the deterministic functionals $E_\e$ behave macroscopically like a homogeneous free-discontinuity functional. If, in addition, the integrands $f$ and $g$ satisfy some standard growth and coercivity conditions, the limit behaviour of $E_\e$ is given by the simple superposition of the limit  behaviours  of its volume and surface parts (see \cite{BDfV}). This is, however, not always the case if $f$ and $g$ satisfy ``degenerate'' coercivity conditions. Indeed, while in \cite{BF, CS, FGP} the two terms in $E_\e$ do not interact, in \cite{BDM, DMZ, BLZ, LS1, LS2} they do interact and produce rather complex limit effects. The study of the deterministic homogenisation of free-discontinuity functionals without any periodicity  condition, and under general assumptions ensuring that the volume and surface terms do ``not mix'' in the limit, has been recently carried out in \cite{CDMSZ}.

\smallskip

\subsection{Stationary random integrands.}
Before giving the precise statement of our results, we need to recall some definitions.
The random environment is modelled by a probability space $(\Omega, \T , P)$ endowed with a group $\tau=(\tau_z)_{z\in \Z^n}$  of $\T$-measurable $P$-preserving transformations  on $\Om$. That is, the action of $\tau$ on $\Omega$ satisfies 
$$
P(\tau ( E)) = P(E) \quad \text{ for every } E\in \T.
$$
We say that $f\colon \Omega \times \R^n \times \R^{m\times n} \to [0,+\infty)$ and $g\colon \Omega \times \R^n \times (\R^m\setminus \{0\} ) \times \mathbb{S}^{n-1} \to [0,+\infty)$ are \emph{stationary random volume and surface integrands} if they  satisfy the assumptions introduced in the deterministic work \cite{CDMSZ} (see  Section \ref{S:setting} for the complete list of assumptions) for every realisation, and the following stationarity condition with respect to $\tau$:
\begin{align*}
f(\omega, x+z,\xi)&=f(\tau_z( \omega), x,\xi) \qquad \text{ for every } (x,\xi)\in \R^n \times \R^{m\times n}\; \text{and every}\; z\in  \Z^n,
\\
g(\omega, x+z,\zeta,\nu)&=g(\tau_z( \omega), x,\zeta,\nu) \quad \text{ for every } (x,\zeta,\nu)\in \R^n \times ( \R^m\setminus\{0\} ) \times \mathbb{S}^{n-1} \text{and every}\; z\in  \Z^n.&
\end{align*}
When, in addition, $\tau$ is \emph{ergodic}, namely when any $\tau$-invariant set $E\in  \T $ has probability zero or one, we say that $f$ and $g$ are ergodic. 

\smallskip

\subsection{The main result: Method of proof and comparison with previous works.} Under the assumption that $f$ and $g$ are stationary random integrands, we prove the convergence of $E_\e$ to a random homogenised functional $E_{\rm hom}$ (Theorem  \ref{G-convE}), and we provide representation formulas for the limit volume and surface integrands (Theorem \ref{en-density_vs}). The combination of these two results shows, in particular, that the limit functional $E_{\rm hom}$ is a free-discontinuity functional of the same form as $E_\e$. If, in addition, $f$ and $g$ are ergodic, we show that $E_{\rm hom}$ is  deterministic.

\smallskip

Our method of proof consists of two main steps: a purely deterministic step and a stochastic one, in the spirit of the strategy introduced in \cite{DMM2} for integral functionals of volume type defined on Sobolev spaces.

\smallskip

In the \emph{deterministic step} we fix $\omega \in \Om$ and we study the asymptotic behaviour of $E_\e(\omega)$. 
Our recent result \cite[Theorem 3.8]{CDMSZ} ensures that $E_\e(\omega)$ converges (in the sense of $\Gamma$-convergence) to a free-discontinuity functional of the form
$$
E_{\rm hom}(\om)(u)=\int_A f_{\rm hom}\left(\omega,\nabla u\right)\dx+\int_{S_u\cap A}g_{\rm hom}\left(\omega,[u],\nu_u\right)\,d\mathcal H^{n-1},
$$
with 
\begin{align}\label{i:f-hom}
f_{\rm hom}(\omega, \xi)&:= \lim_{r\to 0^+}\frac{1}{r^n}\inf \int_{Q_r(rx)} f(\omega, y,\nabla u(y))dy, \\
\label{i:g-hom}
g_{\rm hom}(\omega, \zeta, \nu)&:=\lim_{r\to 0^+}\frac{1}{r^{n-1}}\inf \int_{S_u\cap Q^\nu_r(rx)} g(\omega,y,[u](y),\nu_u(y))d \mathcal{H}^{n-1}(y),
\end{align}
provided the limits in \eqref{i:f-hom}-\eqref{i:g-hom} exist and are independent of $x$.
In \eqref{i:f-hom}  the infimum is taken among Sobolev functions attaining the linear boundary datum $\xi x$ near $\partial Q_r(rx)$ (see \eqref{candidate:v} below), where $Q_r(rx) = rQ(x)$ is the blow-up by $r$ of the unit cube centred at $x$. In \eqref{i:g-hom}  the infimum is taken among all Caccioppoli partitions (namely $u \in SBV_{\textrm{pc}}(Q^\nu_r(rx),\R^m)$, see (f) in Section~\ref{Notation}) attaining a piecewise constant boundary datum near $\partial Q^\nu_r(rx)$ (see \eqref{I:min-pb-sur}), and $Q^\nu_r(rx)$ is obtained by rotating $Q_r(rx)$ in such a way that one face is perpendicular to $\nu$.

In the \emph{stochastic step} we prove that the limits \eqref{i:f-hom} and \eqref{i:g-hom} exist \emph{almost surely} and are independent of $x$. 
To this end, it is crucial to show that we can apply the pointwise Subadditive Ergodic Theorem by Akcoglou and Krengel \cite{AK}. Since our  convergence result \cite{CDMSZ} ensures that there is no interaction between  the volume and surface terms in the limit, we can treat them separately. 

More precisely, for the volume term, proceeding as in  \cite{DMM2} (see also \cite{MeMi}), one can show that the map
\begin{equation}\label{candidate:v}
(\om,Q)\mapsto \inf \bigg\{\int_{Q} f(\omega, y,\nabla u(y))dy \colon u\in W^{1,p}(Q,\R^m), \; u (y)= \xi y \textrm{ near } \partial Q\bigg\}
\end{equation}
defines a \emph{subadditive stochastic process} for every fixed $\xi\in \R^{m\times n}$ (see Definition \ref{Def:subadditive}).
Then the almost sure existence of the limit of \eqref{i:f-hom} and its independence of $x$ directly follow by the $n$-dimensional pointwise Subadditive Ergodic Theorem, which also ensures that the limit is deterministic if $f$ is ergodic.  

For the surface term, however, even applying this general programme presents several difficulties. One of the obstacles is due to a nontrivial ``mismatch'' of dimensions:   On  the one hand the minimisation problem 
\begin{equation}\label{I:min-pb-sur}
\inf \bigg\{ \int_{S_u\cap Q^\nu_r(rx)} \!\!\!\!\! \!\!\!\!\! \!\!\!\!\!  g(\omega,y,[u],\nu_u)d \mathcal{H}^{n-1} \colon u \in SBV_{\textrm{pc}}(Q^\nu_r(rx),\R^m),\, u=u_{rx,\zeta,\nu}\; \text{on}\; \partial Q^\nu_r(rx)\bigg\}
\end{equation}
appearing in \eqref{i:g-hom} is defined on the $n$-dimensional set $Q^\nu_r(rx)$; on the other hand the integration is performed on the $(n-1)$-dimensional set 
$S_u\cap Q^\nu_r(rx)$ and the integral rescales   in $r$  like a surface measure. In other words, the surface term is an $(n-1)$-dimensional measure which is naturally defined on $n$-dimensional sets.
Understanding how to match these different dimensions is a key preliminary step to define a suitable \emph{subadditive stochastic process} for the application of the Subadditive Ergodic Theorem in dimension $n-1$.  

To this end we first set $x=0$. We want to consider the infimum in \eqref{I:min-pb-sur} as a function of $\left(\om, I\right)$, where $I$ belongs to the class $\mathcal{I}_{n-1}$ of $(n-1)$-dimensional intervals (see \eqref{int:int}). To do so, we  define a systematic way to ``complete'' the missing dimension and to rotate the resulting $n$-dimensional interval.  For this we proceed as in \cite{Alciru}, where the authors had to face a similar problem in the study of pure surface energies of spin systems. 

Once this preliminary problem is overcome, we prove in Proposition \ref{propr_ms} that the infimum in \eqref{I:min-pb-sur} with $x=0$ and $\nu$ with rational coordinates is related to an $(n-1)$-dimensional  subadditive stochastic process $\mu_{\zeta,\nu}$ on $\Omega\times \mathcal{I}_{n-1}$  with respect to a suitable group $(\tau_{z'}^{\nu})_{{z'}\in \Z^{n-1}}$ of $P$-preserving transformations  (see Proposition \ref{propr_ms}). A key difficulty in the proof is to establish the measurability  in $\omega$ of the infimum \eqref{I:min-pb-sur}. Note that this is clearly not an issue in the case of volume integrals considered in \cite{DMM1, DMM2}: The infimum in \eqref{candidate:v} is computed on a separable space, so it can be done over a countable set of functions, and hence the measurability of the process follows directly from the measurability of $f$. This is not an issue for the surface energies considered in \cite{Alciru} either: Since the problem is studied in a discrete lattice, the minimisation is reduced to a countable  collection  of functions. The infimum in \eqref{I:min-pb-sur}, instead, cannot be reduced to a countable set, hence the proof of measurability is not straightforward (see Proposition \ref{measurability} in the Appendix).

\smallskip

The next step is to apply the $(n-1)$-dimensional  Subadditive Ergodic Theorem to the subadditive stochastic process $\mu_{\zeta,\nu}$, for fixed $\zeta$ and $\nu$. This ensures that the limit 
\begin{equation}\label{lim:density-intro}
g_{\zeta,\nu}(\om):=\lim_{t\to +\infty} \frac{\mu_{\zeta,\nu}(\omega)(tI)}{t^{n-1}\mathcal{L}^{n-1}(I)},
\end{equation} 
exists for $P$-a.e.\ $\om\in\Om$ and does not depend on $I$. The fact that the limit in \eqref{lim:density-intro} exists in a set of full measure, common to every $\zeta$ and $\nu$, requires some attention (see Proposition \ref{Ergodic-thm-surf}), 
and follows from the continuity properties in $\zeta$ and $\nu$ of some auxiliary functions 
(see \eqref{C:g-utilde} and \eqref{C:g-tilde} in Lemma~\ref{continuity_in_zeta-nu}).

As a final step, we need to show that the limit in \eqref{i:g-hom} is independent of $x$, namely that the choice $x=0$ is not restrictive. We remark that the analogous result for \eqref{i:f-hom} follows directly by $\Gamma$-convergence and by the Subadditive Ergodic Theorem (see also \cite{DMM2}). The surface case, however, is more subtle, since the minimisation problem in \eqref{I:min-pb-sur} depends on $x$ also through the boundary datum $u_{rx,\zeta,\nu}$. To prove the $x$-independence of $g_{\rm hom}$ we proceed in three steps. First, we exploit the stationarity of $g$ to show that \eqref{lim:density-intro} is $\tau$-invariant. Then, we prove the result when $x$ is integer, by combinining the Subadditive Ergodic Theorem and the Birkhoff Ergodic Theorem, in the spirit of \cite[Proof of Theorem 5.5]{Alciru} (see also \cite[Proposition 2.10]{BP}). Finally, we conclude the proof with a careful approximation argument.

\subsection{Outline of the paper.} The paper is organised as follows. In Section \ref{Notation} we introduce some notation used throughout the paper. In the first part of Section \ref{S:setting} we state the assumptions on $f$ and $g$ and we introduce the stochastic setting of the problem; the second part is devoted to the statement of the main results of the paper. The behaviour of the volume term is studied in the short Section \ref{Ergodic}, while Sections \ref{existence ghom} and \ref{not special}, as well as the Appendix, deal with the surface term.

\section{Notation}\label{Notation}

We introduce now some notation that will be used throughout the paper. For the convenience of the reader 
we follow the ordering used in \cite{CDMSZ}. 
\begin{itemize}
\item[(a)] $m$ and $n$ are fixed positive integers, with $n\ge 2$, $\R$ is the set of real numbers, and  $\R^m_0:=\R^m\setminus\{0\}$, 
while $\Q$ is the set of rational numbers and $\Q^m_0:=\Q^m\setminus\{0\}$.   The canonical basis of $\R^n$  is denoted by $e_1,\dots, e_n$. 
For $a, b \in \R^n$, $a \cdot b$ denotes the Euclidean scalar product between $a$ and $b$,
and $|\cdot|$ denotes the absolute value in $\R$ or the Euclidean norm in $\R^n$, $\R^m$, 
or $\R^{m{\times}n}$, depending on the context.

\item[(b)]$\Sph^{n-1}:=\{x=(x_1,\dots,x_n)\in \mathbb{R}^{n}: x_1^2+\dots+x_n^2=1\}$, $\Sph^{n-1}_\pm:=\{x\in \Sph^{n-1}:\pm x_n>0\}$, and $\widehat\Sph^{n-1}_\pm:=\{x\in \Sph^{n-1}: \pm x_{i(x)}>0\}$,
where $i(x)$ is the largest $i\in\{1,\dots,n\}$ such that $x_i\neq 0$. Note that $\Sph^{n-1}_\pm \subset \widehat\Sph^{n-1}_\pm$, and that $\Sph^{n-1} = \widehat\Sph^{n-1}_+ \cup \widehat\Sph^{n-1}_-$.
\item[(c)] $\mathcal{L}^n$ denotes the Lebesgue measure on $\R^n$ and $\mathcal{H}^{n-1}$ the $(n-1)$-dimensional Hausdorff measure on $\R^n$. 
\item[(d)] $\A$ denotes the collection of all bounded open subsets of $\R^n$; if $A$, $B\in \A$, by
$A\subset\subset B$ we mean that $A$ is relatively compact in $B$.
\smallskip
\item[(e)] For $u\in GSBV(A,\R^m)$ (see \cite[Section 4.5]{AFP}), with  $A \in \A$, the jump of $u$ across $S_u$ is defined by 
$[u]:=u^+-u^-$.
\item[(f)] For $A \in \A$ we define
$$
SBV_{\mathrm{pc}}(A,\R^m):=\{u\in SBV(A,\R^m): \nabla u=0 \,\, \mathcal{L}^{n}\textrm{-a.e.}, \,\mathcal{H}^{n-1}(S_u)<+\infty\}.
$$
\item[(g)] For $A \in \A$ and $p>1$ we define
$$
SBV^p(A,\R^m):=\{u\in SBV(A,\R^m): \nabla u \in L^p(A, \R^{m{\times}n}), \,\mathcal{H}^{n-1}(S_u)<+\infty\}.
$$
\item[(h)] For $A \in \A$ and $p>1$ we define
$$
GSBV^p(A,\R^m):=\{u\in GSBV(A,\R^m): \nabla u  \in L^p(A ,\R^{m{\times}n}), \,\mathcal{H}^{n-1}(S_u)<+\infty\};
$$
it is known that $GSBV^p(A,\R^m)$ is a vector space and that
for every $u\in GSBV^p(A,\R^m)$ and for every $\psi\in C^1_c(\R^m,\R^m)$
we have $\psi(u)\in SBV^p(A,\R^m)\cap L^\infty(A,\R^m)$  (see, e.g., \cite[page 172]{DMFT}).
\item[(i)] For every $\mathcal{L}^n$-measurable set $A \subset \R^n$ let 
$L^0(A,\R^m)$ be the space of all  ($\mathcal{L}^n$-equivalence classes of)  $\mathcal{L}^n$-measurable functions $u\colon A\to \R^m$,
endowed with the topology of convergence in measure on bounded subsets of $A$;
we observe that this topology is metrisable and separable.
\item[(j)] For $x\in \mathbb{R}^n$ and $\rho>0$ we define 
\begin{eqnarray*}
& B_\rho(x):=\{y\in \R^n: \,|y-x|<\rho\},
\\
& Q_\rho(x):= \{y\in \R^n: \,|(y-x)\cdot e_i | < \rho/2 \quad\hbox{for } i=1,\dots,n \}.
\end{eqnarray*}
We omit the subscript $\rho$ when $\rho=1$.

\item[(k)] For every $\nu\in \Sph^{n-1}$ let $R_\nu$ be an orthogonal $n{\times} n$ matrix such that $R_\nu e_n=\nu$; we assume that the restrictions of the function $\nu\mapsto R_\nu$ to the sets $\widehat\Sph^{n-1}_\pm$ defined in (b) are continuous and that $R_{-\nu}Q(0)=R_\nu Q(0)$ for every $\nu\in \Sph^{n-1}$;  moreover, we assume that 
$R_\nu \in O(n) \cap \mathbb{Q}^{n \times n}$ 
for every $\nu \in \mathbb{Q}^n \cap \Sph^{n-1}$.  
A map $\nu\mapsto R_\nu$ satisfying these properties is provided in \cite[Example A.1  and Remark A.2]{CDMSZ}.

\item[(l)] For $x\in \mathbb{R}^n$, $\rho>0$, and $\nu\in \Sph^{n-1}$ we set
$$
Q^\nu_\rho(x):= 
R_\nu Q_\rho(0) + x;
$$
we omit the subscript $\rho$ when $\rho=1$.
\item[(m)] For $\xi\in \R^{m{\times}n}$, the linear function from $\R^n$ to $\R^m$ with gradient $\xi$ is denoted by $\ell_\xi$\ie $\ell_\xi(x):=\xi x$, where $x$ is considered as an $n{\times}1$ matrix.
\item[(n)] For $x\in \mathbb{R}^n$, $\zeta\in \R^m_0$, and $\nu \in \Sph^{n-1}$ we define the function $u_{x,\zeta,\nu}$ as
\begin{equation*}
u_{x,\zeta,\nu}(y):=\begin{cases}
\zeta \quad & \mbox{if} \,\, (y-x)\cdot \nu \geq 0,\\
0 \quad & \mbox{if} \,\, (y-x)\cdot \nu < 0.
\end{cases}
\end{equation*}
\item[(o)] For  $x\in \mathbb{R}^n$ and $\nu \in \Sph^{n-1}$, we set
$$
\Pi^{\nu}_0:= \{y\in \R^n: y\cdot \nu = 0\}\quad\text{and}\quad\Pi^{\nu}_{x}:= \{y\in \R^n: (y-x)\cdot \nu = 0\}.
$$ 
\item[(p)] For a given topological space $X$, $\B(X)$ denotes the Borel $\sigma$-algebra on $X$. 
In particular, for every integer $ k \geq 1$, $\B^k$ is
the Borel $\sigma$-algebra on $\R^k$, while $\B^n_S$ 
stands for the Borel $\sigma$-algebra on $\mathbb{S}^{n-1}$. 
\item[(q)]  For every $t\in\R$  the integer part of $t$  is denoted by $\lfloor t\rfloor$\ie  $\lfloor t\rfloor$ is  the largest integer less than or equal to $t$. 

\end{itemize}

\section{Setting of the problem and statements of the main results}\label{S:setting}
This section consists of two parts: in Section \ref{Sub:setting} we introduce the stochastic free-discontinuity functionals and recall the Ergodic Subadditive Theorem; in Section \ref{sub:main_results} we state the main results of the paper.

\subsection{Setting of the problem}\label{Sub:setting}
\noindent Throughout the paper we fix six constants $p, c_1,\dots, c_5$, with  $1<p<+\infty$, $0<c_1\leq c_2<+\infty$, $1\le c_3<+\infty$, and  $0<c_4\leq c_5<+\infty$, and
two nondecreasing continuous functions $\sigma_1$, $\sigma_2\colon  [0,+\infty) \to [0,+\infty)$ such that $\sigma_1(0)=\sigma_2(0)=0$.

\begin{defn}[Volume and surface  integrands] Let $\mathcal{F}=\mathcal{F}(p,c_1,c_2,\sigma_1)$ be the collection of all functions $f\colon \R^n{\times} \R^{m{\times}n}\to [0,+\infty)$ satisfying the following conditions:
\begin{itemize}
\item[$(f1)$] (measurability) $f$ is Borel measurable on $\R^n{\times} \R^{m{\times}n}$;
\item[$(f2)$] (continuity in $\xi$) for every $x \in \R^n$ we have
\begin{equation*}
|f(x,\xi_1)-f(x,\xi_2)| \leq \sigma_1(|\xi_1-\xi_2|)\big(1+f(x,\xi_1)+f(x,\xi_2)\big)
\end{equation*}
for every $\xi_1$, $\xi_2 \in \R^{m{\times}n}$;
\item[$(f3)$] (lower bound) for every $x \in \R^n$ and every $\xi \in \R^{m{\times}n}$
$$
c_1 |\xi|^p \leq f(x,\xi);
$$
\item[$(f4)$] (upper bound) for every $x \in \R^n$ and every $\xi \in \R^{m{\times}n}$
$$
f(x,\xi) \leq c_2(1+|\xi|^p).
$$
\end{itemize}

Let $\mathcal{G}=\mathcal{G}(c_3, c_4,c_5, \sigma_2)$ be the collection of all functions 
$g\colon \R^n{\times}\R^m_0{\times} \Sph^{n-1} \to [0,+\infty)$ satisfying the following conditions:
\begin{itemize}
\item[$(g1)$] (measurability) $g$ is Borel measurable on $\R^n{\times}\R^m_0{\times} \Sph^{n-1}$;
\item[$(g2)$] (continuity in $\zeta$) for every $x\in \R^n$ and every $\nu \in \Sph^{n-1}$ we have
\begin{equation*}
|g(x,\zeta_2,\nu)-g(x,\zeta_1,\nu)|\leq \sigma_2(|\zeta_1-\zeta_2|)\big(g(x,\zeta_1,\nu)+g(x,\zeta_2,\nu)\big)
\end{equation*}
for every $\zeta_1$, $\zeta_2\in \R^m_0$;
\item[$(g3)$] (estimate for $|\zeta_1|\le|\zeta_2|$) for every $x\in \R^n$ and every $\nu \in \Sph^{n-1}$
we have
$$
g(x,\zeta_1,\nu) \leq c_3 \,g(x,\zeta_2,\nu)
$$
for every $\zeta_1$, $\zeta_2 \in \R^m_0$ with $|\zeta_1|\le |\zeta_2|$;
\item[$(g4)$] (estimate for $c_3|\zeta_1|\le|\zeta_2|$) for every $x\in \R^n$ and every $\nu \in \Sph^{n-1}$
we have
\begin{equation*}
g(x,\zeta_1,\nu) \leq \,g(x,\zeta_2,\nu)
\end{equation*}
for every $\zeta_1$, $\zeta_2\in \R^m_0$ with $c_3|\zeta_1|\le|\zeta_2|$;
\item[$(g5)$] (lower bound) for every $x\in \R^n$, $\zeta\in \R^m_0$, and $\nu \in \Sph^{n-1}$
\begin{equation*}
c_4 \leq g(x,\zeta,\nu);
\end{equation*}
\item[$(g6)$] (upper bound) for every $x\in \R^n$, $\zeta\in \R^m_0$, and $\nu \in \Sph^{n-1}$
\begin{equation*}
g(x,\zeta,\nu) \leq c_5 (1+|\zeta|);
\end{equation*}
\item[$(g7)$] (symmetry) for every $x\in \R^n$, $\zeta\in \R^m_0$, and $\nu \in \Sph^{n-1}$
$$g(x,\zeta,\nu) = g(x,-\zeta,-\nu).$$ 
\end{itemize}
\end{defn}

\begin{rem}
As observed in \cite[Remark~3.3]{CDMSZ}, assumptions $(g3)$ and $(g4)$
are strictly weaker than a monotonicity condition in $|\zeta|$.
Indeed, if $g\colon \R^n{\times}\R^m_0{\times} \Sph^{n-1} \to [0,+\infty)$
satisfies
$$
\zeta_1, \zeta_2 \in \R^m_0 \text{ with }
|\zeta_1|\le |\zeta_2| \quad \Longrightarrow \quad g(x,\zeta_1,\nu) \leq  g(x,\zeta_2,\nu)
$$
for every $x\in \R^n$ and every $\nu \in \Sph^{n-1}$, then $g$ satisfies $(g3)$ and $(g4)$.
On the other hand, $(g3)$ and $(g4)$ do not imply monotonicity in $|\zeta|$. 
\end{rem}

Given $f\in \mathcal{F}$ and $g\in \mathcal{G}$, we consider the integral functionals 
$F,\, G \colon L^0(\R^n,\R^m){\times} \A \longrightarrow [0,+\infty]$ defined as
\begin{align}
F(u,A)&:=
\begin{cases}
\ds\int_A f(x, \nabla u) \dx & \text{if}\; u|_A\in W^{1,p}(A,\R^m),\cr
+\infty & \text{otherwise in}\; L^0(\R^n,\R^m).
\end{cases} \label{Effe} \\
G(u,A) &:= 
\begin{cases}
\ds \int_{S_u\cap A}g(x,[u],\nu_u)d \mathcal{H}^{n-1} &\text{if} \; u|_A\in GSBV^p(A,\R^m),\cr
+\infty \quad & \mbox{otherwise in}\; L^0(\R^n,\R^m), \label{surf}
\end{cases}
\end{align}

\begin{rem}
Since $[u]$ is reversed when the orientation of $\nu_u$ is reversed, the functional $G$ is well defined thanks to $(g7)$.
\end{rem}

Let $A\in\A$. For $F$ as in \eqref{Effe}, and $w\in L^0(\R^n,\R^m)$ with $w|_A\in W^{1,p}(A,\R^m)$, we set
\begin{equation}\label{emme}
m^{1,p}_F(w,A):= \inf\left\{F(u,A): u \in L^0(\R^n,\R^m),
\ u|_A\in W^{1,p}(A,\R^m),\; u=w \textrm{ near } \partial A \right\}.
\end{equation}
Moreover, for $G$ as in \eqref{surf}, and  $w\in L^0(\R^n,\R^m)$ with $w|_A\in SBV_{\mathrm{pc}}(A,\R^m)$, we set
\begin{align}\label{emmeG}
m^{\mathrm{pc}}_G(w,A):= \inf\left\{G(u,A): u \in L^0(\R^n,\R^m),
\ u|_A\in SBV_{\mathrm{pc}}(A,\R^m),\ u=w \textrm{ near }\partial A \right\}.
\end{align}
In \eqref{emme} and \eqref{emmeG}, by ``$u=w \textrm{ near }\partial A$''
we mean that there exists a neighbourhood $U$ of $\partial A$ such that
$u=w$ $\mathcal{L}^n$-a.e.\ in $U$.

If $A$ is an arbitrary bounded subset of $\R^n$, we set $m^{1,p}_F(w,A):=m^{1,p}_F(w,\mathrm{int}A)$ and 
$m^{\mathrm{pc}}_G(w,A):=m^{\mathrm{pc}}_G(w,\mathrm{int}A)$, where $\mathrm{int}$ denotes the interior of $A$.

\begin{rem}\label{min-pc-bdd}
Let $u\in L^0(\R^n,\R^m)$ be such that  $u|_A\in SBV_{\mathrm{pc}}(A,\R^m)$, and let $k\in \N$. A careful inspection of the proof of \cite[Lemma 4.1]{CDMSZ} shows that there exist $\mu_k> k$ and $v_k\in L^\infty(\R^n,\R^m)$ with $v_k|_A \in SBV_{\mathrm{pc}}(A,\R^m)$ such that 
\begin{align*}
\|v_k\|_{L^\infty(A,\R^m)} \leq \mu_k, \quad
v_k=u\; \text{ $\mathcal L^n$-a.e. in }\; \{|u|\leq k\},\quad
G(v_k,A)\leq \Big(1+\frac{1}{k}\Big)G(u,A).
\end{align*}
As a consequence we may readily deduce the following. Let $w\in L^0(\R^n,\R^m)$ be such that $w|_A\in SBV_{\mathrm{pc}}(A,\R^m)\cap L^\infty(A,\R^m)$ and let $k\in\N$, $k >\|w\|_{L^\infty(A,\R^m)}$ be fixed. Then
\begin{equation}\label{approx-min-pb-G}
m^{\mathrm{pc}}_G(w,A)=\inf_{k} m^k_G(w,A)=\lim_{k\to +\infty}m^k_G(w,A),
\end{equation}
where
\begin{multline}\label{emme-kappa}
m^k_G(w,A):=\inf\big\{G(u,A)\colon u \in L^0(\R^n,\R^m),
\ u|_A\in SBV_{\mathrm{pc}}(A,\R^m) \cap L^\infty(A,\R^m), \\
\|u\|_{L^\infty(A,\R^m)} \leq k,\, \mathcal H^{n-1}(S_u\cap A) \leq \alpha,\  u=w \textrm{ near }\partial A \big\},
\end{multline}
with $\alpha:=c_5/c_4\, (1+2 \|w\|_{L^\infty(A,\R^m)}) \,\mathcal H^{n-1}(S_w\cap A)$. 
The fact that  the inequality  $\mathcal{H}^{n-1}(S_u\cap A) \leq \alpha$ in \eqref{emme-kappa} 
is not restrictive follows  from  assumption $(g6)$ by using $w$ as a competitor in the minimisation problem defining
$m^k_{G}(w,A)$  for $k>\|w\|_{L^\infty(A,\R^m)}$.

\end{rem}

\bigskip

We are now ready to introduce the probabilistic setting of our problem.
In what follows $(\Om,\T,P)$ denotes a fixed probability space.

\begin{defn}[Random integrand]\label{ri}
A function $f \colon \Om \times \R^n \times \R^{m \times n} \to [0,+\infty)$ is called a \textit{random volume integrand} if
\begin{itemize}
\item[(a)]  $f$ is $(\T\otimes \B^n\otimes \B^{m\times n})$-measurable; 
\item[(b)]  $f(\om,\cdot,\cdot)\in \mathcal{F}$ for every $\om\in \Om$. 
\end{itemize}
A function $g \colon \Om \times \R^n\times \R^m_0 \times \Sph^{n-1}  \to [0,+\infty)$ is called a \textit{random surface integrand} if
\begin{itemize}
\item[(c)]  $g$ is $(\T\otimes \B^n\otimes \B^m\otimes \B^{n}_S)$-measurable; 
\item[(d)] $g(\om,\cdot,\cdot,\cdot)\in \mathcal{G}$ for every $\om\in \Om$.
\end{itemize}
\end{defn}

Let $f$ be a random volume integrand. For $\om \in \Om$  the integral functional $F(\om) \colon L^{0}(\R^n,\R^m) \times \A \longrightarrow [0,+\infty]$ is defined by \eqref{Effe}, with $f(\cdot,\cdot)$ replaced by $f(\om,\cdot,\cdot)$.
Let $g$ be a random surface integrand. For $\om \in \Om$ the  integral functional $G(\om) \colon L^{0}(\R^n,\R^m) \times \A \longrightarrow [0,+\infty]$ is defined by \eqref{surf}, with
$g(\cdot,\cdot,\cdot)$ replaced by $g(\om,\cdot,\cdot,\cdot)$. Finally, for every $\e>0$ we consider the free-discontinuity functional $E_\e(\om)\colon L^{0}(\R^n,\R^m)\times \A \longrightarrow [0,+\infty]$ defined by
\begin{equation}\label{Eeps}
E_\e(\om)(u,A) :=\begin{cases}
\ds\int_A\!\! f\Big(\om,\frac{x}{\e}, \nabla u\Big) dx + \!\! \int_{S_u\cap A} \!\!\!\!\!  g\Big(\om,\frac{x}\e,[u],\nu_u\Big)d \mathcal{H}^{n-1}& \!\!\text{if} \; u|_A\!\in GSBV^p(A,\R^m),\cr
+\infty & \!\! \text{otherwise\! in}\,L^{0}(\R^n,\R^m).
\end{cases}
\end{equation}

In the study of stochastic homogenisation an important role is played by the notions introduced by the following definitions.

\begin{defn}[$P$-preserving transformation]
A $P$-preserving transformation on $(\Om,\T,P)$ is a map $T \colon \Om \to \Om$ satisfying the following properties:
\begin{itemize}
\item[(a)] (measurability) $T$ is $\T$-measurable;
\item[(b)] (bijectivity) $T$ is bijective;
\item[(c)]  (invariance) $P(T(E))=P(E)$, for every $E\in \T$.
\end{itemize}
\noindent If, in addition, every set $E\in \T$ which satisfies 
$T(E)=E$ (called $T$-invariant set) has probability $0$ or $1$, then $T$ is called {\it ergodic}.
\end{defn}

\begin{defn}[Group of $P$-preserving transformations]\label{P-preserving} Let $k$ be a positive integer.
A group of $P$-preserving transformations on $(\Om,\T,P)$ is a family $(\tau_z)_{z\in \Z^k}$ of mappings $\tau_z \colon \Om \to \Om$ satisfying the following properties:
\begin{itemize}
\item[(a)] (measurability) $\tau_z$ is $\T$-measurable for every $z\in \Z^k$;
\item[(b)] (bijectivity) $\tau_z$ is bijective for every $z\in \Z^k$;
\item[(c)]  (invariance) $P(\tau_z(E))=P(E)$,\, for every $E\in \T$ and every $z\in \Z^k$;
\item[(d)] (group property) $\tau_{0}={\mathrm id}_\Om$ (the identity map on $\Om$) and $\tau_{z+z'}=\tau_{z} \circ \tau_{z'}$ for every $z, z'\in \Z^k$;
\end{itemize}
\noindent If, in addition, every set $E\in \T$ which satisfies 
$\tau_z (E)=E$ for every $z\in \Z^k$
has probability $0$ or $1$, then $(\tau_z)_{z\in\Z^k}$ is called {\it ergodic}.
\end{defn}

\begin{rem}
In the case $k=1$ a group of $P$-preserving transformations has the form $(T^z)_{z\in \Z}$, where $T:=\tau_1$ is a $P$-preserving transformation.
\end{rem}

We are now in a position to define the notion of stationary random integrand.
\begin{defn}[Stationary random integrand]
A random volume integrand $f$ is \textit{stationary} with respect to a group $(\tau_z)_{z\in \Z^n}$ of $P$-preserving transformations on $(\Om,\T,P)$ if
$$
f(\om,x+z,\xi)=f(\tau_z(\om),x,\xi)
$$
for every $\om\in \Om$, $x\in \R^n$, $z\in \Z^n$, and $\xi\in \R^{m \times n}$.

Similarly, a random surface integrand $g$ is \textit{stationary} with respect to $(\tau_z)_{z\in \Z^{n}}$ if 
\begin{equation}\label{covariance}
g(\om,x+z,\zeta,\nu)=g(\tau_z(\om),x,\zeta,\nu)
\end{equation}
for every $\om\in \Om$, $x\in \R^n$, $z\in \Z^{n}$, $\zeta\in \R^m_0$, and $\nu \in \Sph^{n-1}$.
\end{defn} 

\medskip

We now recall the notion of subadditive stochastic processes as well as the Subadditive Ergodic Theorem by Akcoglu and Krengel \cite[Theorem 2.7]{AK}.  

\medskip

Let $k$ be a positive integer. For every $a,b\in \R^k$, with $a_i< b_i$ for $i=1,\dots,k$, we define
$$
\quad [a, b):= \{x\in \R^k: a_i\leq x_i<b_i\text{ for } i=1,\dots,k\},
$$
and we set 
\begin{equation}\label{int:int}
\mathcal{I}_k:= \{[a, b): a,b\in \R^k, a_i< b_i\text{ for }i=1,\dots,k\}.
\end{equation}
\medskip

\begin{defn}[Subadditive process]\label{Def:subadditive} A {\em subadditive process} with respect to a group $(\tau_z)_{z\in \Z^k}$, $k\ge 1$, of $P$-preserving transformations on $(\Om,\T,P)$ is  a function $\mu\colon\Om\times \mI_k\to \R$ satisfying the following properties: 
\begin{itemize}
\item[(a)] (measurability) for every $A\in \mI_k$ the function $\om\mapsto \mu(\om,A)$ is $\T$-measurable;
\item[(b)] (covariance) for every $\om\in\Om$, $A\in \mI_k$, and $z\in \Z^k$ we have
$
\mu(\om, A+z)=\mu(\tau_z(\om),A)$;
\item[(c)] (subadditivity) for every $A\in \mI_k$ and for every \emph{finite} family $(A_i)_{i\in I} \subset \mI_k$ of pairwise disjoint sets such that $A= \cup_{i\in I} A_i$, we have
$$
\mu(\om,A)\leq \sum_{i\in I} \mu(\om,A_i)\quad\hbox{for every } \om\in\Om; 
$$
 \item[(d)] (boundedness) there exists $c>0$ such that $0\leq \mu(\om, A) \leq c \mathcal L^k(A)$ for every $\om\in\Om$ and every $A\in \mI_k$. 
\end{itemize}
\end{defn}

We now state a variant of the pointwise ergodic Theorem \cite[Theorem 2.7 and Remark p. 59]{AK} which is suitable for our purposes (see, e.g., \cite[Proposition 1]{DMM2}). 

\begin{thm}[Subadditive Ergodic Theorem]\label{ergodic} Let $k\in \{n-1,n\}$ and let $(\tau_z)_{z\in \Z^k}$ be a group of $P$-preserving transformations on $(\Om,\T,P)$. Let $\mu \colon \Om\times \mI_k\to \R$ be a subadditive process  with respect to $(\tau_z)_{z\in \Z^k}$, and let $Q=Q_\rho(x)$ be an arbitrary cube with sides parallel to the coordinate axes.
Then there exist a $\T$-measurable function $\varphi \colon \Om \to [0,+\infty)$ and a set $\Om'\in\T$ with $P(\Om')$=1 such that
\begin{equation*}
\lim_{t \to +\infty} \frac{\mu(\om,tQ)}{\mathcal L^k(tQ)}=\varphi(\om), 
\end{equation*}
for every $\om\in \Om'$. If in addition $(\tau_z)_{z\in \Z^k}$ is ergodic, then $\varphi$ is constant $P$-a.e. 
\end{thm}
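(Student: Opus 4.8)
The plan is to deduce Theorem~\ref{ergodic} from the multiparameter pointwise subadditive ergodic theorem of Akcoglu and Krengel \cite[Theorem~2.7 and Remark p.~59]{AK}, in the form used in \cite[Proposition~1]{DMM2}. First I would pass to a \emph{discrete} process: restricting $\mu$ to the intervals of $\mathcal{I}_k$ with vertices in $\Z^k$ --- equivalently, to finite unions of integer translates of $[0,1)^k$ --- properties (a), (b) and (c) of Definition~\ref{Def:subadditive} say precisely that this restriction is a subadditive set function for the $\Z^k$-action $(\tau_z)_{z\in\Z^k}$ in the sense of \cite{AK}, while (d) provides the uniform bound $0\le\mu(\om,A)\le c\,\mathcal{L}^k(A)$ needed for integrability. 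The cited theorem then yields a $\T$-measurable function $\varphi$, measurable with respect to the $\sigma$-algebra of $\tau$-invariant sets, together with a set $\Om'\in\T$, $P(\Om')=1$, on which $\mu(\om,[0,N)^k)/N^k\to\varphi(\om)$; the bound (d) forces $0\le\varphi\le c$, so $\varphi\colon\Om\to[0,+\infty)$, as required.

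The next step is to upgrade this convergence to an arbitrary cube $Q=Q_\rho(x)$ (centre $x$, side $\rho$) and to real dilation parameters $t\to+\infty$. The point is that $tQ$ differs from a suitable integer-vertex cube of the form $w+[0,L)^k$, with $w\in\Z^k$ close to the ``lower corner'' $tx-(t\rho/2)(1,\dots,1)$ and $L\in\N$ with $L/(t\rho)\to1$, only by a ``boundary layer'' which is a union of at most $2k$ intervals of $\mathcal{I}_k$ of total Lebesgue measure $O\big((t\rho)^{k-1}\big)$. Estimating such layers by means of subadditivity (c) and boundedness (d), and rewriting $\mu$ on the translated cube as $\mu(\tau_{w}\,\cdot\,,[0,L)^k)$ via covariance (b), one reduces matters to the asymptotics of $\mu(\tau_{w}\om,[0,L)^k)/L^k$. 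Here one uses that the exceptional set $\Om'$ may be chosen $\tau$-invariant (apply the same boundary-layer estimate to a \emph{fixed} integer shift) and that $\varphi$ is $\tau$-invariant, so that $\varphi(\om)$ is the expected limit in all cases.

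The one genuinely delicate point --- and the step I would expect to be the main obstacle --- is that the shift $w$ here \emph{depends on $t$}, so the a.s.\ convergence along $[0,N)^k$ cannot be applied ``at a fixed environment''. This is dealt with by a covering argument: tiling a large cube by $N^k$ integer translates of $[0,L)^k$ and using subadditivity produces the inequality $\mu(\om,\,\cdot\,)\le\sum_z\mu(\tau_z\om,[0,L)^k)$ over a grid of integer shifts (and, symmetrically, a matching lower bound up to boundary layers); letting first $N\to+\infty$ and invoking the Birkhoff Ergodic Theorem for the bounded function $\om\mapsto L^{-k}\mu(\om,[0,L)^k)$, and then $t\to+\infty$, identifies all the limits with $\varphi(\om)$ on a common $\tau$-invariant set of full measure. (Alternatively one may simply quote the Remark on p.~59 of \cite{AK}, where convergence is already stated along arbitrary cubes with side length tending to $+\infty$.) Finally, if $(\tau_z)_{z\in\Z^k}$ is ergodic, the $\sigma$-algebra of $\tau$-invariant sets is $P$-trivial, hence the $\tau$-invariant measurable function $\varphi$ is $P$-a.e.\ equal to the constant $\int_\Om\varphi\,dP$.
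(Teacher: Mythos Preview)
The paper does not prove this theorem: it is stated as a known result, with the sentence preceding it pointing to \cite[Theorem~2.7 and Remark p.~59]{AK} and \cite[Proposition~1]{DMM2}. Your proposal correctly identifies exactly these sources and sketches how the statement follows from them, which is more than the paper itself does.

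One comment on the sketch: the ``genuinely delicate point'' you flag --- that the integer shift $w$ depends on $t$, so the centre of $tQ$ drifts --- is real if one tries to reduce by hand to the standard cube $[0,N)^k$, and your covering/Birkhoff argument is a valid way around it. But, as you note parenthetically at the end, the Remark on p.~59 of \cite{AK} already asserts convergence along \emph{arbitrary} cubes with side length tending to infinity (this is precisely why the paper cites that Remark alongside Theorem~2.7), so the cleaner route is simply to quote it and avoid the extra work. In the paper's actual applications (e.g.\ the proof of Theorem~\ref{Ergodic-thm-surf}) the theorem is in any case only invoked with $Q$ centred at the origin.
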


\subsection{Statement of the main results}\label{sub:main_results}
In this section we state the main result of the paper, Theorem \ref{G-convE}, which provides a $\Gamma$-convergence and integral representation result for the random functionals $(E_\e(\om))_{\e > 0}$ introduced in \eqref{Eeps}, under the assumption that the volume and surface integrands $f$ and $g$ are stationary. The volume and surface integrands of the $\Gamma$-limit are given in terms of \textit{separate}  asymptotic  cell formulas, showing that there is no interaction between volume and surface densities by stochastic $\Gamma$-convergence. 

 The next theorem proves the existence of the  limits in the asymptotic  cell formulas that will be used in the statement of the main result. The proof will be given in Sections \ref{Ergodic}-\ref{not special}.

\medskip

\begin{thm}[Homogenisation formulas]\label{en-density_vs}
Let $f$ be a stationary random volume integrand and let $g$ be a stationary random surface integrand  with respect to a group $(\tau_z)_{z\in \Z^n}$ of $P$-preserving transformations on $(\Om,\T,P)$. For every $\om\in\Om$ let $F(\om)$ and $G(\om)$ be defined by \eqref{Effe} and \eqref{surf}, with $f(\cdot,\cdot)$ and $g(\cdot,\cdot,\cdot)$
replaced by $f(\om,\cdot,\cdot)$ and $g(\om,\cdot,\cdot,\cdot)$, respectively. Finally, let $m^{1,p}_{F(\om)}$ and $m^{\mathrm{pc}}_{G(\om)}$ be defined by \eqref{emme} and \eqref{emmeG}, respectively.
Then there exists $\Om'\in \T$, with $P(\Om')=1$,
such that for every $\om\in \Om'$, $x\in \R^{n}$, $\xi \in \R^{m\times n}$, $\zeta\in \R^m_0$, and $\nu\in \Sph^{n-1}$ the limits  
\begin{align*}
\lim_{t\to +\infty} \frac{m^{1,p}_{F(\om)}(\ell_\xi,Q_t(tx))}{t^{n}} \quad \textrm{and} \quad \lim_{t\to +\infty} \frac{m^{\mathrm{pc}}_{G(\om)}(u_{t x,\zeta,\nu},Q^\nu_t(tx))}{t^{n-1}}
\end{align*}
exist and are independent of $x$. More precisely, there exist a random volume integrand $f_{\mathrm{hom}} \colon \Om\times \R^{m\times n} \to [0,+\infty)$, and a random surface integrand $g_{\mathrm{hom}} \colon \Om\times \R^m_0\times \Sph^{n-1} \to [0,+\infty)$ such that for every $\om\in\Om'$, $x \in \mathbb{R}^n$,
$\xi \in \mathbb{R}^{m\times n}, \zeta \in \mathbb{R}^m_0$, and $\nu \in \mathbb{S}^{n-1}$
\begin{align}
f_{\mathrm{hom}}(\om,\xi)=&\lim_{t\to +\infty} \frac{m^{1,p}_{F(\om)}(\ell_\xi,Q_t(tx))}{t^{n}}=  \lim_{t\to +\infty} \frac{m^{1,p}_{F(\om)} (\ell_\xi, Q_t(0))}{t^n},
\label{psi0}
\\
g_{\mathrm{hom}}(\om,\zeta,\nu)=&\lim_{t\to +\infty} \frac{m^{\mathrm{pc}}_{G(\om)}(u_{t x,\zeta,\nu},Q^\nu_t(tx))}{t^{n-1}}= \lim_{t\to +\infty} \frac{m^{\mathrm{pc}}_{G(\om)}(u_{0,\zeta,\nu},Q^\nu_t(0))}{t^{n-1}}.
\label{phi0}
\end{align}
If, in addition, $(\tau_z)_{z\in \Z^n}$ is ergodic, then $f_{\mathrm{hom}}$ and $g_{\mathrm{hom}}$ are independent of $\om$ and
\begin{align*}
f_{\mathrm{hom}}(\xi)&=\lim_{t\to +\infty}\, \frac{1}{t^n} {\int_\Om m^{1,p}_{F(\om)} (\ell_\xi, Q_t(0))\,dP(\om)}, 
\\
g_{\mathrm{hom}}(\zeta,\nu)&= \lim_{t\to +\infty} \frac{1}{t^{n-1}} \int_\Om m^{\mathrm{pc}}_{G(\om)} (u_{0,\zeta,\nu}, Q_t^\nu(0)) \, dP(\om).
\end{align*}
\end{thm}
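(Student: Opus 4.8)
The plan is to treat the volume and surface parts separately, since the whole point of the structural assumptions (borrowed from \cite{CDMSZ}) is that there is no interaction in the limit. For each part I would exhibit a subadditive stochastic process in the sense of Definition \ref{Def:subadditive} whose scaling limit produces exactly the asymptotic cell formula, and then invoke the Subadditive Ergodic Theorem \ref{ergodic} to get the almost sure existence of the limit and, in the ergodic case, its constancy and the expected-value formula. The volume term is the easy half and essentially reproduces \cite{DMM2}; the surface term is where all the work lies and I would isolate it in Sections \ref{existence ghom}--\ref{not special} together with the Appendix.

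\textbf{Volume term.} First I would fix $\xi \in \R^{m\times n}$ and set $\mu_\xi(\om, A) := m^{1,p}_{F(\om)}(\ell_\xi, A)$ for $A \in \mI_n$. Measurability in $\om$ is immediate because the infimum in \eqref{emme} may be taken over a countable dense subset of $W^{1,p}$ (separability, cf.\ the discussion in the introduction) and $f$ is jointly measurable; covariance $\mu_\xi(\om, A+z) = \mu_\xi(\tau_z(\om), A)$ follows from the stationarity of $f$ together with the translation $u \mapsto u(\cdot + z) - \xi z$; subadditivity follows by gluing competitors on a partition of $A$ (each piece keeps the boundary datum $\ell_\xi$, so the glued map is admissible on $A$); and the two-sided bound $0 \le \mu_\xi(\om,A) \le c_2(1+|\xi|^p)\,\mathcal L^n(A)$ comes from $(f3)$--$(f4)$, using $\ell_\xi$ itself as a competitor for the upper bound. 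Theorem \ref{ergodic} applied with $Q = Q_1(0)$ then gives a $\T$-measurable $f_{\mathrm{hom}}(\cdot,\xi)$ with $m^{1,p}_{F(\om)}(\ell_\xi, Q_t(0))/t^n \to f_{\mathrm{hom}}(\om,\xi)$ for $\om$ in a full-measure set. The $x$-independence for the volume term is then cheap: by covariance and the fact that $Q_t(tx)$ and $Q_t(0)$ differ by a translation of order $t$ whose ratio with $t$ tends to $0$, or alternatively directly from the $\Gamma$-convergence result of \cite{CDMSZ}, the limit along $Q_t(tx)$ coincides with the one along $Q_t(0)$. Finally one intersects the countably many full-measure sets obtained for $\xi$ in a countable dense set and uses the continuity estimate $(f2)$ (which passes to $f_{\mathrm{hom}}$) to extend to all $\xi$; ergodicity gives constancy and, via dominated convergence against the uniform bound, the integral formula.

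\textbf{Surface term.} Here I would follow the programme sketched in the introduction. Fix $\zeta \in \R^m_0$ and $\nu \in \Sph^{n-1}$ with rational coordinates. The first obstacle is the dimensional mismatch: \eqref{I:min-pb-sur} lives on $n$-dimensional cubes but scales like an $(n-1)$-dimensional measure. Following \cite{Alciru}, I would use the rational orthogonal matrix $R_\nu$ from (k) to turn an $(n-1)$-dimensional interval $I \in \mI_{n-1}$ into the oriented $n$-dimensional slab $R_\nu(I \times [-\tfrac12,\tfrac12]) $ and set $\mu_{\zeta,\nu}(\om, I) := m^{\mathrm{pc}}_{G(\om)}(u_{0,\zeta,\nu}, R_\nu(I\times[-\tfrac12,\tfrac12]))$, then check that this is a subadditive process on $\Om \times \mI_{n-1}$ with respect to the induced group $\tau^\nu_{z'} := \tau_{R_\nu z'}$, $z' \in \Z^{n-1}$ (rationality of $R_\nu$ ensures $R_\nu z' \in \Z^n$). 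Covariance uses \eqref{covariance}; subadditivity uses gluing of Caccioppoli partitions across the partition, where one must check the boundary datum is preserved on the internal faces — this is where the piecewise-constant boundary condition $u_{0,\zeta,\nu}$ and the Remark \ref{min-pc-bdd} truncation are convenient; boundedness is $(g5)$--$(g6)$ with $u_{0,\zeta,\nu}$ as competitor. The genuinely hard step, which I expect to be the main obstacle, is the $\T$-measurability of $\om \mapsto \mu_{\zeta,\nu}(\om,I)$: unlike the volume case one cannot reduce to a countable family of competitors, so I would prove it separately (this is Proposition \ref{measurability} in the Appendix, presumably via a projection/measurable-selection argument on a suitable Polish space of $SBV_{\mathrm{pc}}$ competitors). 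Granting this, Theorem \ref{ergodic} yields $g_{\zeta,\nu}(\om) = \lim_{t} \mu_{\zeta,\nu}(\om, tI)/(t^{n-1}\mathcal L^{n-1}(I))$ for $\om$ in a full-measure set depending a priori on $(\zeta,\nu)$; to get a common full-measure set over all $\zeta$ and rational $\nu$ one needs the continuity in $(\zeta,\nu)$ of the auxiliary densities (Lemma \ref{continuity_in_zeta-nu}), allowing one to restrict to countably many $(\zeta,\nu)$ and then pass to the limit. Extending from rational $\nu$ to all $\nu \in \Sph^{n-1}$ again uses the continuity of $\nu \mapsto R_\nu$ on $\widehat\Sph^{n-1}_\pm$ together with the same continuity estimates. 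The last point is the $x$-independence of \eqref{phi0}: since the boundary datum $u_{tx,\zeta,\nu}$ genuinely depends on $x$, one cannot simply translate. I would first show $\tau$-invariance of $g_{\zeta,\nu}$ from stationarity, then treat integer $x$ by combining the Subadditive and Birkhoff Ergodic Theorems as in \cite[Proof of Theorem 5.5]{Alciru} and \cite[Proposition 2.10]{BP}, and finally pass from integer to arbitrary $x$ by an approximation argument exploiting the uniform bounds and the $(g_\cdot)$ estimates. As before, ergodicity upgrades $g_{\mathrm{hom}}$ to a deterministic function and — combined with dominated convergence using the linear-growth bound $(g6)$ and $\mathcal H^{n-1}(S_{u_{0,\zeta,\nu}} \cap Q^\nu_t(0)) = t^{n-1}$ — gives the integral representation. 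Joint measurability of $f_{\mathrm{hom}}$ and $g_{\mathrm{hom}}$ in $(\om,\xi)$ and $(\om,\zeta,\nu)$ follows from the pointwise limits of measurable functions together with the continuity estimates already invoked.
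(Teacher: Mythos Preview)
Your strategy matches the paper's step for step (volume via \cite{DMM2}; surface via Proposition~\ref{propr_ms}, Theorem~\ref{Ergodic-thm-surf}, Theorem~\ref{Ergodic-thm-surf2}; measurability via the Appendix; $x$-independence via $\tau$-invariance, Birkhoff, and approximation), but your construction of the surface subadditive process contains two slips that would make it fail as written. First, ``rationality of $R_\nu$ ensures $R_\nu z' \in \Z^n$'' is false: $R_\nu \in O(n)\cap \Q^{n\times n}$ only gives $R_\nu(z',0) \in \Q^n$, so $\tau_{R_\nu z'}$ is undefined and covariance breaks. The paper repairs this by fixing a positive integer $M_\nu$ with $M_\nu R_\nu \in \Z^{n\times n}$ and building it into both the map $T_\nu$ and the process (see \eqref{An}--\eqref{def:tildemu}); the correct group is $\tau^\nu_{z'} := \tau_{M_\nu R_\nu(z',0)}$.

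Second, your fixed-thickness slab $R_\nu(I\times[-\tfrac12,\tfrac12])$ does define a subadditive process, but along $I=tQ'$ it produces slabs of tangential width $t$ and normal width $1$, not the cubes $Q^\nu_t(0)$ that appear in \eqref{phi0}; the link between the slab limit and the cube limit is missing (one inequality is easy by extending a slab competitor by $u_{0,\zeta,\nu}$, the other is not). The paper's device is to let the thickness depend on $A'$: with $c=\tfrac12\max_j(b_j-a_j)$ one gets $T_\nu(tQ')=Q^\nu_{M_\nu t}(0)$ exactly, while subadditivity survives because the sub-slabs $T_\nu(A'_i)$ are contained in $T_\nu(A')$ and the complement $T_\nu(A')\setminus\bigcup_i T_\nu(A'_i)$ misses $\Pi^\nu_0$, hence carries zero energy for $u_{0,\zeta,\nu}$ (see the proof of Proposition~\ref{propr_ms}).
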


We are now ready to state the main result of this paper, namely the almost sure $\Gamma$-convergence of the sequence of random functionals $(E_\e(\om))_{\e>0}$ introduced in \eqref{Eeps}.

\begin{thm}[$\Gamma$-convergence]\label{G-convE}
Let $f$ and $g$ be stationary random  volume and surface integrands with respect to a group $(\tau_z)_{z\in \Z^n}$ of $P$-preserving transformations on $(\Om,\T,P)$, let $E_\e(\om)$ be as in \eqref{Eeps}, let $\Om'\in \T$ (with $P(\Om')=1$), $f_{\mathrm{hom}}$, and $g_{\mathrm{hom}}$ be as in Theorem \ref{en-density_vs}, and let  $E_{\mathrm{hom}}(\om): L^0(\R^n, \R^m)\times \A  \longrightarrow [0,+\infty]$ be the free-discontinuity functional defined by
$$
E_{\mathrm{hom}}(\om)(u, A)= 
\begin{cases}
\ds\int_A f_{\mathrm{hom}}(\om,\nabla u)\dx + \int_{S_u\cap A}g_{\mathrm{hom}}(\om,[u],\nu_u)d \mathcal{H}^{n-1}&\text{if }u|_A\in GSBV^p(A,\R^m),
\\
+\infty&\text{otherwise in }L^0(\R^n, \R^m).
\end{cases}
$$ 
Let moreover $E^p_\e(\om)$ and $E^p_{\mathrm{hom}}(\om)$ be the restrictions to $L^p_{\mathrm{loc}}(\R^n,\R^m){\times} \A$ of $E_\e(\om)$ and $E_{\mathrm{hom}}(\om)$, respectively. Then 
$$
E_\e(\om)(\cdot,A) \; \Gamma\hbox{-converge to }E_{\mathrm{hom}}(\om) (\cdot,A)\hbox{ in }L^0(\R^n,\R^m),
$$
and
$$
E^p_\e(\om)(\cdot,A)\; \Gamma\hbox{-converge to }E^p_{\mathrm{hom}}(\om) (\cdot,A)\hbox{ in }L^p_{\mathrm{loc}}(\R^n,\R^m),
$$
for every $\om\in \Om'$ and every $A\in \A$.

Further, if $(\tau_z)_{z\in \Z^n}$ is ergodic, then $E_{\mathrm{hom}}$ (resp.~$E^p_{\mathrm{hom}}$) is a deterministic functional\ie it does not depend on $\om$.
\end{thm}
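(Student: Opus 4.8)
The plan is to deduce Theorem \ref{G-convE} by combining the deterministic $\Gamma$-convergence result \cite[Theorem 3.8]{CDMSZ} with the homogenisation formulas of Theorem \ref{en-density_vs}. First I would fix $\om\in\Om'$, where $\Om'$ is the full-measure set produced by Theorem \ref{en-density_vs}. For such $\om$, each realisation $f(\om,\cdot,\cdot)\in\mathcal F$ and $g(\om,\cdot,\cdot,\cdot)\in\mathcal G$, so the deterministic theory of \cite{CDMSZ} applies to the sequence $E_\e(\om)(\cdot,A)$: up to subsequences it $\Gamma$-converges (in $L^0(\R^n,\R^m)$, and its restriction in $L^p_{\mathrm{loc}}$) to a free-discontinuity functional whose volume and surface densities are given by the cell formulas \eqref{i:f-hom}--\eqref{i:g-hom}, \emph{provided} the limits there exist and are $x$-independent. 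But Theorem \ref{en-density_vs} asserts exactly that these limits exist and are $x$-independent for every $\om\in\Om'$, and identifies them with $f_{\mathrm{hom}}(\om,\cdot)$ and $g_{\mathrm{hom}}(\om,\cdot,\cdot)$. Hence the deterministic $\Gamma$-limit is precisely $E_{\mathrm{hom}}(\om)(\cdot,A)$. Since the limit is identified independently of the subsequence, the Urysohn property of $\Gamma$-convergence gives convergence of the full sequence $E_\e(\om)(\cdot,A)$; the same argument with the $L^p_{\mathrm{loc}}$-topology yields the second convergence statement. This holds for every $\om\in\Om'$ and every $A\in\A$, which is the first assertion.

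\textbf{The ergodic case.} For the last assertion, assume $(\tau_z)_{z\in\Z^n}$ is ergodic. Then the final part of Theorem \ref{en-density_vs} gives that $f_{\mathrm{hom}}$ and $g_{\mathrm{hom}}$ are independent of $\om$ (they are constant $P$-a.e. as functions of $\om$, for each fixed $\xi$, resp.\ $(\zeta,\nu)$, by the ergodic part of Theorem \ref{ergodic}; a standard argument over a countable dense set of parameters together with the continuity properties in $\xi$, $\zeta$, $\nu$ guaranteed by $(f2)$, $(g2)$ upgrades this to a single full-measure set on which $f_{\mathrm{hom}}$ and $g_{\mathrm{hom}}$ are entirely $\om$-independent). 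Consequently the functional $E_{\mathrm{hom}}(\om)$, being built solely out of $f_{\mathrm{hom}}$ and $g_{\mathrm{hom}}$ via the fixed formula in the statement, does not depend on $\om$ on this full-measure set; the same holds for its restriction $E^p_{\mathrm{hom}}(\om)$. The explicit expectation formulas for $f_{\mathrm{hom}}(\xi)$ and $g_{\mathrm{hom}}(\zeta,\nu)$ follow from integrating the almost sure limits against $P$ and invoking dominated convergence, using the uniform bounds $m^{1,p}_{F(\om)}(\ell_\xi,Q_t(0))\le c_2(1+|\xi|^p)\,t^n$ and $m^{\mathrm{pc}}_{G(\om)}(u_{0,\zeta,\nu},Q^\nu_t(0))\le c_5(1+|\zeta|)\,t^{n-1}$ coming from $(f4)$ and $(g6)$.

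\textbf{Main obstacle.} The genuinely hard part is entirely contained in Theorem \ref{en-density_vs}, whose proof occupies Sections \ref{Ergodic}--\ref{not special} and the Appendix: establishing that the cell-formula limits exist almost surely with an $x$-independent value, for the surface term this requires the dimensional reduction from $n$-dimensional minimisation domains to $(n-1)$-dimensional intervals, the construction of the subadditive process $\mu_{\zeta,\nu}$ with its group of $(n-1)$-dimensional $P$-preserving transformations, the measurability in $\om$ of the infimum \eqref{I:min-pb-sur} (nontrivial since the admissible class is not countable), and the passage to a common full-measure set over all $(\zeta,\nu)$ using the continuity estimates. Granted Theorem \ref{en-density_vs}, the proof of Theorem \ref{G-convE} itself is essentially a bookkeeping exercise: it is the routine transfer of a deterministic $\Gamma$-convergence statement to the random setting, realisation by realisation.
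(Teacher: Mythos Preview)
Your proposal is correct and follows essentially the same approach as the paper: fix $\om\in\Om'$, observe that $f(\om,\cdot,\cdot)\in\mathcal F$ and $g(\om,\cdot,\cdot,\cdot)\in\mathcal G$, and then combine the deterministic $\Gamma$-convergence result of \cite{CDMSZ} with the cell-formula limits supplied by Theorem~\ref{en-density_vs}. The paper's own proof is in fact even more terse (three lines, citing \cite[Theorem~3.7]{CDMSZ} rather than Theorem~3.8), and your additional remarks on the Urysohn property and on the ergodic case are accurate elaborations of what the paper leaves implicit.
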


\begin{proof}
Let $\Om'\in \mathcal T$ be the the set with $P(\Om')=1$ whose existence is established in Theorem~\ref{en-density_vs} and let $\om\in \Om'$ be fixed. Then, the functionals $F(\om)$ and $G(\om)$ defined by \eqref{Effe} and \eqref{surf}, respectively (with $f$ replaced by $f(\om,\cdot,\cdot)$ and $g$ replaced by $g(\om,\cdot,\cdot,\cdot,\cdot)$)  satisfy all the assumptions of \cite[Theorem 3.7]{CDMSZ}. Therefore, by combining Theorem~\ref{en-density_vs} and \cite[Theorem 3.7]{CDMSZ} the conclusion follows.
\end{proof}
Thanks to Theorem \ref{G-convE} we can also characterise the asymptotic behaviour of some minimisation problems 
involving $E_\e(\om)$. An example is shown in the corollary below.

\begin{cor}[Convergence of minimisation problems]
Let $f$ and $g$ be stationary random  volume and surface 
 integrands with respect to a group $(\tau_z)_{z\in \Z^n}$ of $P$-preserving transformations on $(\Om,\T,P)$, let $\Om'\in \T$ (with $P(\Om')=1$), $f_{\mathrm{hom}}$, and $g_{\mathrm{hom}}$ be as in Theorem \ref{en-density_vs}.
Let $\om\in\Om'$, $A\in \A$, $h\in L^p(A,\R^m)$, and let $(u_\e)_{\e > 0} \subset GSBV^p(A,\R^m)\cap L^p(A,\R^m)$ be a sequence such that
\begin{eqnarray*}
&\ds \int_A f\Big(\om,\frac{x}{\e}, \nabla u_\e\Big)dx + \int_{S_{u_\e}\cap A} g\Big(\om,\frac{x}\e,[u_\e],\nu_{u_\e}\Big)d \mathcal{H}^{n-1}+ \int_A|u_\e-h|^pdx
\\
&\ds\le \inf_{u} \bigg(\int_A f\Big(\om,\frac{x}{\e}, \nabla u\Big)dx + \int_{S_u\cap A} g\Big(\om,\frac{x}\e,[u],\nu_u\Big)d \mathcal{H}^{n-1}+ \int_A|u-h|^pdx\bigg)+\eta_\e
\end{eqnarray*}
for some $\eta_\e\to 0+$, where the infimum is taken over all $u\in GSBV^p(A,\R^m)\cap L^p(A,\R^m)$. Then there exists a sequence $\e_k\to 0+$ such that $(u_{\e_k})_{k \in \mathbb{N}}$ converges in $L^p(A,\R^m)$ to a minimiser $u_0$ of
$$
\int_A f_{\mathrm{hom}}(\om,\nabla u)\dx + \int_{S_u\cap A}g_{\mathrm{hom}}(\om,[u],\nu_u)d \mathcal{H}^{n-1}+ \int_A|u-h|^pdx
$$
on $GSBV^p(A,\R^m)\cap L^p(A,\R^m)$. Moreover
$$
\int_A f\Big(\om,\frac{x}{\e}, \nabla u_\e\Big)dx + \int_{S_{u_\e}\cap A} g\Big(\om,\frac{x}\e,[u_\e],\nu_{u_\e}\Big)d \mathcal{H}^{n-1}+ \int_A|u_\e-h|^p\,dx
$$
converges to
$$
\int_A f_{\mathrm{hom}}(\om,\nabla u_0)\dx + \int_{S_{u_0}\cap A}g_{\mathrm{hom}}(\om,[u_0],\nu_{u_0})d \mathcal{H}^{n-1}+ \int_A|u_0-h|^pdx
$$
as $\e\to 0+$.
\end{cor}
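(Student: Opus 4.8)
The plan is to derive the corollary from Theorem~\ref{G-convE} by the standard combination of $\Gamma$-convergence with an equi-coercivity (compactness) argument, treating the fidelity term $u\mapsto\int_A|u-h|^p\dx$ as a perturbation that is continuous for $L^p(A,\R^m)$-convergence but only lower semicontinuous for convergence in measure. Fix $\om\in\Om'$ and $A\in\A$, and for $u\in GSBV^p(A,\R^m)\cap L^p(A,\R^m)$ write $J_\e(u):=E_\e(\om)(u,A)+\int_A|u-h|^p\dx$ and $J_0(u):=E_{\mathrm{hom}}(\om)(u,A)+\int_A|u-h|^p\dx$, and let $m_\e:=\inf J_\e$ over that class, so the hypothesis reads $J_\e(u_\e)\le m_\e+\eta_\e$ with $\eta_\e\to0^+$.

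\emph{Compactness.} First I would bound the energies: using the constant function $u\equiv0$ as a competitor for $m_\e$ and $(f4)$ gives $m_\e\le c_2\,\mathcal L^n(A)+\|h\|_{L^p(A,\R^m)}^p$, hence $\sup_\e J_\e(u_\e)=:M<+\infty$. Since the three summands of $J_\e$ are non-negative, from $(f3)$ we get $c_1\int_A|\nabla u_\e|^p\dx\le M$, from $(g5)$ we get $c_4\,\mathcal H^{n-1}(S_{u_\e}\cap A)\le M$, and directly $\int_A|u_\e-h|^p\dx\le M$; thus $(u_\e)$ is bounded in $L^p(A,\R^m)$, hence in $L^1(A,\R^m)$. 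By the compactness theorem in $GSBV^p$ (truncation together with Ambrosio's compactness and closure theorems in $SBV$, see \cite{AFP}) I extract a subsequence $\e_k\to0^+$ and $u_0\in GSBV^p(A,\R^m)$ with $u_{\e_k}\to u_0$ in measure on $A$; Fatou's lemma gives $u_0\in L^p(A,\R^m)$.

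\emph{Minimality and convergence of the energies.} Since $u_{\e_k}\to u_0$ in $L^0(\R^n,\R^m)$, the $\Gamma$-$\liminf$ inequality of Theorem~\ref{G-convE} gives $E_{\mathrm{hom}}(\om)(u_0,A)\le\liminf_k E_{\e_k}(\om)(u_{\e_k},A)$, while Fatou's lemma gives $\int_A|u_0-h|^p\dx\le\liminf_k\int_A|u_{\e_k}-h|^p\dx$; adding these (using $\liminf a_k+\liminf b_k\le\liminf(a_k+b_k)$) yields $J_0(u_0)\le\liminf_k J_{\e_k}(u_{\e_k})$. For the matching upper bound I would fix an arbitrary $v\in GSBV^p(A,\R^m)\cap L^p(A,\R^m)$, extend it by $0$ outside $A$, and apply the $\Gamma$-$\limsup$ inequality of Theorem~\ref{G-convE} \emph{in $L^p_{\mathrm{loc}}(\R^n,\R^m)$} to obtain a recovery sequence $v_\e\to v$ in $L^p_{\mathrm{loc}}(\R^n,\R^m)$ with $E_\e(\om)(v_\e,A)\to E_{\mathrm{hom}}(\om)(v,A)$. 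Since $\overline A$ is compact, $v_\e\to v$ in $L^p(A,\R^m)$, so $\int_A|v_\e-h|^p\dx\to\int_A|v-h|^p\dx$ and therefore $J_\e(v_\e)\to J_0(v)$. Combining $m_{\e_k}\le J_{\e_k}(v_{\e_k})$ with $J_{\e_k}(u_{\e_k})\le m_{\e_k}+\eta_{\e_k}$ and $\eta_{\e_k}\to0$ gives $J_0(u_0)\le\liminf_k J_{\e_k}(u_{\e_k})\le\limsup_k J_{\e_k}(u_{\e_k})\le J_0(v)$ for every competitor $v$; taking $v=u_0$ shows that $u_0$ minimises $J_0$ and that $J_{\e_k}(u_{\e_k})\to J_0(u_0)=\min J_0$, which is the asserted convergence of the energies.

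\emph{Upgrading to $L^p$, and the main obstacle.} It remains to improve $u_{\e_k}\to u_0$ from convergence in measure to convergence in $L^p(A,\R^m)$. From $J_{\e_k}(u_{\e_k})\to J_0(u_0)$ and the two \emph{separate} lower bounds $E_{\mathrm{hom}}(\om)(u_0,A)\le\liminf_k E_{\e_k}(\om)(u_{\e_k},A)$ and $\int_A|u_0-h|^p\dx\le\liminf_k\int_A|u_{\e_k}-h|^p\dx$, the bookkeeping $\limsup_k\int_A|u_{\e_k}-h|^p\dx=J_0(u_0)-\liminf_k E_{\e_k}(\om)(u_{\e_k},A)\le J_0(u_0)-E_{\mathrm{hom}}(\om)(u_0,A)=\int_A|u_0-h|^p\dx$ forces $\int_A|u_{\e_k}-h|^p\dx\to\int_A|u_0-h|^p\dx$. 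Then $u_{\e_k}-h\to u_0-h$ in measure on $A$ with convergent $L^p$-norms, and the classical fact that for $1<p<\infty$ convergence in measure together with convergence of the norms implies strong convergence in $L^p$ (via the Brezis--Lieb lemma applied along an $\mathcal L^n$-a.e.\ convergent further subsequence and then promoted to the full subsequence, the limit being unique) gives $u_{\e_k}\to u_0$ in $L^p(A,\R^m)$, completing the argument. The hard part is precisely this last passage together with the correct handling of topologies: Theorem~\ref{G-convE} provides $\Gamma$-convergence only for convergence in measure and in $L^p_{\mathrm{loc}}$, so one must use the $L^p_{\mathrm{loc}}$ recovery sequences for the upper bound and extract strong $L^p$-convergence of the almost-minimisers from the convergence of the energies — there is no $L^p$-equicoercivity available, and indeed bounded-energy sequences are generally not $L^p$-precompact.
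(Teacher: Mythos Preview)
Your argument is correct and is precisely the standard $\Gamma$-convergence plus equi-coercivity route that the paper has in mind: the paper's own proof consists solely of the sentence ``the proof follows from Theorem~\ref{G-convE}, arguing as in the proof of \cite[Corollary~6.1]{CDMSZ}'', and you have written out in full what that citation contains. One small point to make explicit: the statement asserts convergence of the energies along the \emph{full} family $\e\to0^+$, not just along $(\e_k)$; this follows from your argument because the upper bound $\limsup_\e J_\e(u_\e)\le\min J_0$ holds for the whole family (via the recovery sequences), while the lower bound $\liminf_\e J_\e(u_\e)\ge\min J_0$ follows by applying your compactness and $\Gamma$-$\liminf$ step to an arbitrary subsequence.
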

\begin{proof}
The proof follows  from  Theorem \ref{G-convE}, arguing as in the proof of \cite[Corollary 6.1]{CDMSZ}. 
\end{proof}


\section{Proof  of the cell-formula for the volume integrand}

In this section we prove   \eqref{psi0}\label{Ergodic}.

\begin{prop}[Homogenised volume integrand] \label{en-density_vv}
Let $f$ be a stationary random volume integrand with respect to a group $(\tau_z)_{z\in \Z^n}$ of $P$-preserving transformations on $(\Om,\T,P)$. Then there exists $\Om'\in \T$, with $P(\Om')=1$,
such that for every $\om\in \Om'$, for every $x\in \R^n$, and $\xi \in \R^{m\times n}$ the limit
\begin{align*}
\lim_{t\to +\infty} \frac{m^{1,p}_{F(\om)}(\ell_\xi,Q_t(t x))}{t^{n}}
\end{align*}
exists and is independent of $x$. More precisely, there exists a random volume integrand $f_{\mathrm{hom}} \colon \Om\times \R^{m\times n} \to [0,+\infty)$, independent of $x$, such that 
\begin{align*}
f_{\mathrm{hom}}(\om,\xi)=\lim_{t\to +\infty} \frac{m^{1,p}_{F(\om)}(\ell_\xi,Q_t(t x))}{t^{n}}=  \lim_{t\to +\infty} \frac{m^{1,p}_{F(\om)} (\ell_\xi, Q_t(0))}{t^n}.
\end{align*}
If, in addition, $(\tau_z)_{z\in \Z^n}$ is ergodic, then $f_{\mathrm{hom}}$ is independent of $\om$ and
\begin{align*}
f_{\mathrm{hom}}(\xi)=\lim_{t\to +\infty}\, \frac{1}{t^n} {\int_\Om m^{1,p}_{F(\om)} (\ell_\xi, Q_t(0))\,dP(\om)} =\inf_{k\in \mathbb N}\, \frac{1}{k^n} {\int_\Om m^{1,p}_{F(\om)} (\ell_\xi, Q_k(0))\,dP(\om)}.
\end{align*}
\end{prop}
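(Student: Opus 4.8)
The plan is to recognise, for each fixed $\xi\in\R^{m\times n}$, the set function
$$
\mu_\xi(\om,A):=m^{1,p}_{F(\om)}(\ell_\xi,A),\qquad A\in\mI_n ,
$$
as a subadditive process with respect to $(\tau_z)_{z\in\Z^n}$ in the sense of Definition~\ref{Def:subadditive}, and then to apply the Subadditive Ergodic Theorem, exactly along the lines of \cite{DMM2}. So the first step is to check the four axioms. \emph{Measurability} of $\om\mapsto\mu_\xi(\om,A)$ is easy here (unlike in the surface case): since $\{u\in W^{1,p}(A,\R^m): u=\ell_\xi\text{ near }\partial A\}$ is separable and $F(\om)(\cdot,A)$ is continuous along strong $W^{1,p}$-convergence (by $(f2)$ and dominated convergence), the infimum in \eqref{emme} may be computed over a fixed countable family, whence $\mu_\xi(\cdot,A)$ is $\T$-measurable by the joint measurability of $f$. \emph{Covariance} $\mu_\xi(\om,A+z)=\mu_\xi(\tau_z(\om),A)$ for $z\in\Z^n$ follows from the change of variables $y=x+z$, $v(x):=u(x+z)-\xi z$, which preserves both the admissibility condition "$u=\ell_\xi$ near the boundary" and, by stationarity of $f$, the energy. \emph{Subadditivity} is the gluing argument: given a finite partition $A=\bigcup_i A_i$ into intervals and near-optimal competitors $u_i$ on each $A_i$, the function equal to $u_i$ on $A_i$ lies in $W^{1,p}(A,\R^m)$ precisely because each $u_i$ coincides with the \emph{continuous} function $\ell_\xi$ near $\partial A_i$, and it is admissible for $\mu_\xi(\om,A)$. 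Finally \emph{boundedness}: $\mu_\xi\ge0$ since $f\ge0$, while testing with $u=\ell_\xi$ and using $(f4)$ gives $\mu_\xi(\om,A)\le c_2(1+|\xi|^p)\,\mathcal L^n(A)$.

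Once $\mu_\xi$ is known to be a subadditive process, I would invoke Theorem~\ref{ergodic} with the cube $Q=Q_1(x)$, noting that $Q_t(tx)=tQ_1(x)$; since the limit function and the exceptional set produced by the theorem do not depend on the chosen cube, this yields at once, for each $\xi$, a set $\Om'_\xi\in\T$ with $P(\Om'_\xi)=1$ and a $\T$-measurable map $\om\mapsto f_{\mathrm{hom}}(\om,\xi)$ such that $t^{-n}m^{1,p}_{F(\om)}(\ell_\xi,Q_t(tx))\to f_{\mathrm{hom}}(\om,\xi)$ for every $x\in\R^n$ and every $\om\in\Om'_\xi$; in particular the limit is independent of $x$, which is the $x$-independence claimed. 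If in addition $(\tau_z)_{z\in\Z^n}$ is ergodic, the theorem also gives that $f_{\mathrm{hom}}(\cdot,\xi)$ is $P$-a.e.\ constant.

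The remaining point is to produce a \emph{single} exceptional set $\Om'$ valid for all $\xi$ at once. I would take $\Om':=\bigcap_{\xi\in\Q^{m\times n}}\Om'_\xi$, still of full probability, and extend to arbitrary $\xi$ by continuity: assumption $(f2)$ gives, for every interval $A$, an estimate of the form
$$
|\mu_\xi(\om,A)-\mu_{\xi'}(\om,A)|\le\sigma_1(|\xi-\xi'|)\bigl(1+c_2(1+|\xi|^p)+c_2(1+|\xi'|^p)\bigr)\mathcal L^n(A),
$$
obtained by testing the problem for one datum with a (truncated) near-optimal competitor for the other; this bound is uniform in $t$ after division by $t^n$, so the limit exists for every $\xi$ on $\Om'$, the function $(\om,\xi)\mapsto f_{\mathrm{hom}}(\om,\xi)$ is Carathéodory and hence jointly measurable, and $f_{\mathrm{hom}}(\om,\cdot)\in\mathcal F$ is checked directly: $(f3)$ by Jensen's inequality applied to $\int_A\nabla u=\xi\,\mathcal L^n(A)$, $(f4)$ from the boundedness bound, $(f2)$ from the displayed estimate. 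Thus $f_{\mathrm{hom}}$ is a random volume integrand.

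In the ergodic case, the constant value $f_{\mathrm{hom}}(\xi)$ also equals $\lim_t t^{-n}\int_\Om m^{1,p}_{F(\om)}(\ell_\xi,Q_t(0))\,dP$ by dominated convergence (integrands bounded by $c_2(1+|\xi|^p)$, converging $P$-a.e.\ to $f_{\mathrm{hom}}(\xi)$), and it equals $\inf_{k\in\N}k^{-n}\int_\Om m^{1,p}_{F(\om)}(\ell_\xi,Q_k(0))\,dP$ because $k\mapsto\int_\Om m^{1,p}_{F(\om)}(\ell_\xi,Q_k(0))\,dP$ is subadditive up to boundary-layer corrections (tile a large cube by integer translates of a fixed one, integrate, use covariance), so that its normalised limit coincides with its infimum, exactly as in \cite{DMM2}. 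Overall this proposition is the $SBV$-free adaptation of the Sobolev result of \cite{DMM2}, and it is essentially routine; the step requiring the most care is the verification of the subadditive-process axioms — in particular the gluing that uses the matching linear boundary data to stay in $W^{1,p}$ — together with the uniform-in-$\xi$ estimate needed for a common exceptional set. There is no genuine obstacle here: the real difficulties of the paper all concern the surface term and its dimensional mismatch.
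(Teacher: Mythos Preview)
Your proposal is correct and follows precisely the route the paper takes: the paper's own proof consists of citing Proposition~\ref{propr_m} (which records that $\mu_\xi(\om,A)=m^{1,p}_{F(\om)}(\ell_\xi,A)$ is a subadditive process, with proof delegated to \cite{DMM2,MeMi}) and then invoking \cite[Theorem~1]{DMM2} for the existence and $x$-independence of the limit, together with \cite[Lemmas A.5--A.6]{CDMSZ} for the fact that $f_{\mathrm{hom}}$ is a random volume integrand. You have simply unpacked those references --- verifying the four axioms of Definition~\ref{Def:subadditive}, intersecting over rational $\xi$ and passing to all $\xi$ by a uniform continuity estimate based on $(f2)$, and deducing the ergodic formulas by dominated convergence and the standard subadditivity-of-averages argument --- which is exactly their content.
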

The proof of Proposition~\ref{en-density_vv} relies on the application of the  Subadditive  Ergodic Theorem~\ref{ergodic} 
to the function $ (\om,A)\mapsto  m^{1,p}_{F(\om)}(\ell_\xi,A)$, which is a subadditive process as shown below. 
\begin{prop}\label{propr_m}
Let $f$ be a stationary random volume integrand with respect to a group $(\tau_z)_{z\in \Z^n}$ of $P$-preserving transformations on $(\Om,\T,P)$ and let $F(\om)$ be as in \eqref{Effe} with $f(\cdot,\cdot)$ replaced by $f(\om,\cdot,\cdot)$. Let $\xi\in \R^{m\times n}$ and set 
$$
\mu_\xi(\om,A):=m^{1,p}_{F(\om)}(\ell_\xi, A)\quad  \text{for every } \om \in \Om,\ A\in \mI_n, 
$$ 
where $m^{1,p}_{F(\om)}$ is as in \eqref{emme} and $\mI_n$ as in \eqref{int:int}.
Then  $\mu_\xi$ is a subadditive process with respect to $(\tau_z)_{z\in \Z^n}$ and 
\begin{equation*}
0\leq \mu_\xi(\om, A) \leq c_2 (1+|\xi|^p) \mathcal{L}^n(A) \quad \textrm{for } \textrm{ every } \om\in \Om.
\end{equation*}
\end{prop}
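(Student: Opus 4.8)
The plan is to verify each of the four defining properties of a subadditive process (Definition~\ref{Def:subadditive}) for $\mu_\xi$, plus the stated upper bound. First I would address \textbf{measurability} (property (a)): for fixed $A\in\mI_n$ the map $\om\mapsto m^{1,p}_{F(\om)}(\ell_\xi,A)$ should be $\T$-measurable. The point here, as the introduction of the paper stresses, is that for \emph{volume} integrals this is genuinely easy: $W^{1,p}(A,\R^m)$ is separable, and the competitor set $\{u\in W^{1,p}(A,\R^m):u=\ell_\xi\text{ near }\partial A\}$ admits a countable dense subset (in fact one can take a countable dense family of functions of the form $\ell_\xi+\varphi$ with $\varphi\in C_c^\infty(A,\R^m)$, or relax to $C_c^1$), so the infimum in \eqref{emme} may be computed over that countable family. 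For each fixed competitor $u$, the map $\om\mapsto F(\om)(u,A)=\int_A f(\om,x,\nabla u)\,dx$ is $\T$-measurable by the joint measurability of $f$ (Definition~\ref{ri}(a)) and Fubini/Tonelli, and an infimum of countably many measurable functions is measurable. I would note that continuity of $f$ in $\xi$ (assumption $(f2)$) is what legitimates passing to a dense subset of competitors without changing the infimum.

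Next, \textbf{covariance} (property (b)): I would show $\mu_\xi(\om,A+z)=\mu_\xi(\tau_z(\om),A)$ for $z\in\Z^n$. This is a change of variables $y=x-z$ in the minimisation problem \eqref{emme}: if $u$ is admissible for $m^{1,p}_{F(\om)}(\ell_\xi,A+z)$, then $v(y):=u(y+z)-\xi z$ is admissible for $m^{1,p}_{F(\tau_z(\om))}(\ell_\xi,A)$ (the shift of the boundary datum $\ell_\xi(x)=\xi x$ produces the constant $\xi z$, which drops out because the functional depends only on $\nabla u$ and because we allow the additive constant freely — or one simply observes $\ell_\xi(\cdot+z)=\ell_\xi(\cdot)+\xi z$ and constants are irrelevant to $F$), and by stationarity of $f$ one has $f(\om,y+z,\nabla u(y+z))=f(\tau_z(\om),y,\nabla v(y))$ so the energies match. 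The reverse inclusion is symmetric, giving equality.

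Then \textbf{subadditivity} (property (c)): given pairwise disjoint $(A_i)_{i\in I}\subset\mI_n$ with $A=\bigcup_i A_i$, I would take for each $i$ a near-optimal competitor $u_i$ for $m^{1,p}_{F(\om)}(\ell_\xi,A_i)$ and glue them into a single $u\in W^{1,p}(A,\R^m)$ by setting $u=u_i$ on $A_i$; since each $u_i$ equals $\ell_\xi$ near $\partial A_i$, the glued function lies in $W^{1,p}$ (no jump is created across the shared faces, which have $\mathcal L^n$-measure zero), equals $\ell_\xi$ near $\partial A$, and $F(\om)(u,A)=\sum_i F(\om)(u_i,A_i)$; letting the competitors approach optimality gives $\mu_\xi(\om,A)\le\sum_i\mu_\xi(\om,A_i)$. (Here I use that half-open intervals in $\mI_n$ tile without overlap and that the relevant boundary pieces are $\mathcal L^n$-null, and I should be slightly careful that ``near $\partial A_i$'' neighbourhoods can be chosen small enough not to conflict.) Finally, \textbf{boundedness} (property (d)) and the displayed estimate: nonnegativity is immediate since $f\ge 0$; for the upper bound, use the single competitor $u=\ell_\xi$ itself, which is admissible for $m^{1,p}_{F(\om)}(\ell_\xi,A)$ and has $\nabla u\equiv\xi$, so by the growth bound $(f4)$, $\mu_\xi(\om,A)\le\int_A f(\om,x,\xi)\,dx\le c_2(1+|\xi|^p)\mathcal L^n(A)$, valid for every $\om\in\Om$; this is exactly the claimed inequality and gives (d) with $c=c_2(1+|\xi|^p)$.

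I do not expect a serious obstacle here — this proposition is the ``easy'' volume counterpart of the much harder surface case, where measurability (Proposition~\ref{measurability}) is the delicate point. The only mild care needed is in the gluing step for subadditivity (ensuring the glued function is genuinely in $W^{1,p}$ and respects the boundary data on all the subcubes simultaneously) and in justifying the reduction to a countable competitor set for measurability; both are standard. I would write the proof compactly, emphasising the change of variables for covariance and the single-competitor estimate for boundedness, and citing \cite{DMM2} for the pattern.
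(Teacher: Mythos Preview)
Your proposal is correct and follows exactly the standard argument the paper is pointing to: the paper's own ``proof'' is nothing more than the citation ``See \cite{DMM2} and also \cite[Proposition 3.2]{MeMi}'', and what you have written is precisely the content of those references. Your handling of the four properties---separability of $W^{1,p}$ plus $(f2)$ for measurability, change of variables plus stationarity for covariance, gluing near-optimal competitors for subadditivity, and the trivial competitor $\ell_\xi$ with $(f4)$ for boundedness---is the canonical route, and the minor caveats you flag (the interior-of-interval convention and the $\mathcal L^n$-null interfaces in the gluing) are exactly the right points to be careful about.
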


\begin{proof}See \cite{DMM2} and also \cite[Proposition 3.2]{MeMi}.
\end{proof}
We can now give the proof of Proposition~\ref{en-density_vv}.
\begin{proof}[Proof of Proposition~\ref{en-density_vv}]
The existence of $f_{\rm{hom}}$ and its independence of $x$ follow from Proposition~\ref{propr_m}
and \cite[Theorem 1]{DMM2} (see also \cite[Corollary 3.3]{MeMi}).
The fact that $f_{\rm{hom}}$ is a random volume integrand can be shown arguing 
as in \cite[Lemma A.5 and Lemma A.6]{CDMSZ}, and this concludes the proof.   
\end{proof}

\section{Proof  of the cell-formula for the surface integrand: a special case }\label{existence ghom}

This section is devoted to the proof of  \eqref{phi0} in the the special case $x=0$. Namely, we prove 
the following result.

\begin{thm}\label{Ergodic-thm-surf}
Let $g$ be a stationary random surface integrand
with respect to a group $(\tau_z)_{z\in \Z^n}$ of $P$-preserving transformations on $(\Om,\T,P)$. 
Then there exist $\widetilde\Om\in\T$, with $P(\widetilde\Om)=1$, and a random surface  integrand   $g_{\mathrm{hom}}\colon \Om\times \R^m_0 \times \Sph^{n-1} \to \R$ such that
\begin{equation}\label{lim:density}
g_{\mathrm{hom}}(\omega,\zeta,\nu)=
\lim_{t\to +\infty} \frac{m^{\mathrm{pc}}_{G(\om)}(u_{0,\zeta,\nu},Q^\nu_t(0))}{t^{n-1}},
\end{equation} 
for every $\om\in \widetilde\Om$, $\zeta\in \R^m_0$, and $\nu\in \Sph^{n-1}$.
\end{thm}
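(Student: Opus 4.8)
The plan is to reduce the statement to an application of the $(n-1)$-dimensional Subadditive Ergodic Theorem~\ref{ergodic}, after an auxiliary construction that resolves the dimensional mismatch described in the Introduction. Fix $x=0$ throughout. The first step is to treat rational data: for $\nu\in\Sph^{n-1}\cap\Q^n$ and $\zeta\in\Q^m_0$ we associate to every $(n-1)$-dimensional interval $I\in\mI_{n-1}$ an $n$-dimensional set $T_\nu(I)\subset\R^n$ by completing the missing direction (taking the product of $I$ with a unit interval in the $e_n$-direction) and then rotating through $R_\nu$, so that one pair of faces of $T_\nu(I)$ becomes orthogonal to $\nu$; this is the construction used in \cite{Alciru}. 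One then sets
$$
\mu_{\zeta,\nu}(\om,I):= m^{\mathrm{pc}}_{G(\om)}\big(u_{0,\zeta,\nu},\, T_\nu(I)\big),\qquad \om\in\Om,\ I\in\mI_{n-1},
$$
and the main work -- carried out in Proposition~\ref{propr_ms} -- is to check that $\mu_{\zeta,\nu}$ is a subadditive process in the sense of Definition~\ref{Def:subadditive} with respect to a suitable group $(\tau^\nu_{z'})_{z'\in\Z^{n-1}}$ of $P$-preserving transformations, obtained from $(\tau_z)_{z\in\Z^n}$ by conjugation with the rational rotation $R_\nu$ (here one uses $R_\nu\in O(n)\cap\Q^{n\times n}$, so that $R_\nu$ carries integer translations parallel to $\Pi^\nu_0$ into a lattice commensurate with $\Z^{n-1}$). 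Subadditivity and boundedness follow by cutting and pasting competitors together with $(g5)$--$(g6)$, as in the deterministic theory of \cite{CDMSZ}; covariance is a consequence of the stationarity \eqref{covariance} of $g$ and of the definition of $\tau^\nu_{z'}$. The delicate point is the measurability of $\om\mapsto\mu_{\zeta,\nu}(\om,I)$: unlike in \cite{DMM2}, where the Sobolev infimum is computed on a separable space, and unlike in \cite{Alciru}, where the lattice makes the class of competitors countable, the infimum in \eqref{emmeG} cannot be reduced to a countable family, so its measurability must be obtained by a projection / measurable-selection argument -- this is Proposition~\ref{measurability} in the Appendix, and I expect it to be the principal technical obstacle.

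Granted the subadditive-process property, Theorem~\ref{ergodic} applied to $\mu_{\zeta,\nu}$ with the unit cube of $\R^{n-1}$ yields a $\T$-measurable function $g_{\zeta,\nu}\colon\Om\to[0,+\infty)$ and a set of full probability on which
$$
g_{\zeta,\nu}(\om)=\lim_{t\to+\infty}\frac{\mu_{\zeta,\nu}(\om,tI)}{t^{n-1}\leb^{n-1}(I)}
$$
independently of $I\in\mI_{n-1}$; unwinding the definitions of $T_\nu$ and $\mu_{\zeta,\nu}$, this is exactly the limit \eqref{lim:density} along $Q^\nu_t(0)$. Intersecting these full-measure sets over the countable family $(\zeta,\nu)\in\Q^m_0\times(\Sph^{n-1}\cap\Q^n)$ produces a single $\widetilde\Om\in\T$ with $P(\widetilde\Om)=1$ on which all the corresponding limits exist simultaneously.

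To pass from rational to arbitrary $(\zeta,\nu)$ I would invoke the equicontinuity estimates of Lemma~\ref{continuity_in_zeta-nu}: the functions $(\zeta,\nu)\mapsto m^{\mathrm{pc}}_{G(\om)}(u_{0,\zeta,\nu},Q^\nu_t(0))/t^{n-1}$ are, uniformly in $t$ and in $\om$, continuous in $\zeta$ (via $(g2)$) and in $\nu$ (via the continuity of $\nu\mapsto R_\nu$ on $\widehat\Sph^{n-1}_\pm$ and a change-of-variables estimate using $(g3)$--$(g4)$ and $(g6)$). Hence the limits \eqref{lim:density} extend by uniform approximation to every $\zeta\in\R^m_0$ and every $\nu\in\Sph^{n-1}$ on the same $\widetilde\Om$, and define a function $g_{\mathrm{hom}}(\om,\zeta,\nu)$ there. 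Passing to the limit in the cell formula, the lower and upper bounds $(g5)$--$(g6)$ and the symmetry $(g7)$ transfer to $g_{\mathrm{hom}}$ (for the symmetry one uses $R_{-\nu}Q(0)=R_\nu Q(0)$), so $g_{\mathrm{hom}}(\om,\cdot,\cdot)\in\mathcal{G}$ for every $\om\in\widetilde\Om$; joint $(\T\otimes\B^m\otimes\B^n_S)$-measurability of $g_{\mathrm{hom}}$ follows from the $\T$-measurability of each $g_{\zeta,\nu}$ together with the just-established continuity in $(\zeta,\nu)$, arguing as in \cite[Lemma~A.5 and Lemma~A.6]{CDMSZ}. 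This shows that $g_{\mathrm{hom}}$ is a random surface integrand and completes the proof of the theorem; the $x$-independence of the limit and the ergodic representation formula are postponed to Section~\ref{not special}.
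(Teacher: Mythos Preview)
Your proposal is correct and follows essentially the same route as the paper. Two small points you gloss over: the rotation $R_\nu$ must be scaled by an integer factor $M_\nu$ so that the induced translations actually land in $(\tau_z)_{z\in\Z^n}$ (this is your ``commensurate'' remark, made precise in Proposition~\ref{propr_ms}), and the measurability argument of Proposition~\ref{measurability} only yields $\widehat\T$-measurability on the \emph{completion} $(\Om,\widehat\T,\widehat P)$, so Theorem~\ref{ergodic} is applied there before one passes back to $\T$-measurable objects.
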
  
The proof of Theorem~\ref{Ergodic-thm-surf} will need several preliminary results.
A key ingredient will be the application 
of the Ergodic Theorem~\ref{ergodic} with $k=n-1$.
This is a nontrivial task, since it requires to define an $(n-1)$-dimensional subadditive process
starting from the $n$-dimensional set function $ A\mapsto  m^{\mathrm{pc}}_{G(\om)}(u_{0,\zeta,\nu}, A )$.
To this end, we are now going to illustrate a systematic way to transform $(n-1)$-dimensional intervals (see \eqref{int:int}) into $n$-dimensional intervals oriented along  a prescribed  direction 
$\nu \in \mathbb{S}^{n-1}$. 

For every $\nu \in \mathbb S^{n-1}$ let $R_\nu$ be the orthogonal $n \times n$ 
matrix  defined in point (k) of Section~\ref{Notation} (see also \cite[Example A.1]{CDMSZ}).
Then, the following properties are satisfied:
\begin{itemize}
\item $R_\nu e_n=\nu$ for every $\nu \in \mathbb S^{n-1}$; 
\item the restrictions of the function $\nu\mapsto R_\nu$ to the sets $\widehat\Sph^{n-1}_\pm$ are continuous;
\item $R_{-\nu}Q(0)=R_\nu Q(0)$ for every $\nu \in \mathbb S^{n-1}$.
\end{itemize}
 Moreover,  $R_\nu \in O(n) \cap \mathbb{Q}^{n \times n}$ 
for every $\nu \in \mathbb{Q}^n \cap \Sph^{n-1}$.
Since $R_\nu e_n=\nu$, we have that $\{R_\nu e_j\}_{j=1,\dots,n-1}$ 
is an orthonormal basis of $\Pi^\nu_0$. 
Let now $M_\nu$ be a positive integer such that $M_\nu R_\nu \in \mathbb{Z}^{n\times n}$.
Note that, in particular, for every $z'\in \mathbb{Z}^{n-1}$ we have that $M_\nu R_\nu(z',0)\in \Pi^\nu_0\cap \mathbb{Z}^n$, namely $M_\nu R_\nu$ maps integer vectors perpendicular to $e_n$ into integer vectors perpendicular to $\nu$.

Let $A'\in \mathcal I_{n-1}$; we define the (rotated) $n$-dimensional interval $T_\nu(A')$ as 
\begin{equation}\label{An}
T_\nu(A'):= M_\nu R_\nu \left(A'\times[-c, c)\right), \quad c:=\frac12 \max_{1\leq j\leq n-1}(b_j-a_j), 
\quad (M_\nu R_\nu \in \mathbb{Z}^{n\times n}).
\end{equation}
 The  next proposition is the analogue of Proposition \ref{propr_m} for the surface energy, 
and will be crucial in the proof Theorem~\ref{Ergodic-thm-surf}.

\begin{prop}\label{propr_ms} 
Let $g$ be a stationary surface integrand 
with respect to a group $(\tau_z)_{z\in \Z^n}$ of $P$-preserving transformations on $(\Om,\T,P)$, let $G(\om)$ be as in \eqref{surf}, with $g(\cdot,\cdot,\cdot)$ replaced by $g(\om,\cdot,\cdot,\cdot)$, let $\zeta\in \Q^m_0$,  and let $\nu \in \mathbb{Q}^n \cap \Sph^{n-1}$. For every $A'\in\mI_{n-1}$ and $\om\in \Om$ set 
\begin{equation}\label{def:tildemu}
\mu_{\zeta,\nu}(\omega,A'):=\frac{1}{M_\nu^{n-1}}\,m^{\mathrm{pc}}_{G(\om)}(u_{0,\zeta,\nu}, T_\nu(A')),
\end{equation}
where $m^{\mathrm{pc}}_{G(\om)}$ is as in \eqref{emmeG}, while $M_\nu$ and $T_\nu(A')$ are as in \eqref{An}. 
Let $(\Om,\widehat\T,\widehat P)$ denote the completion of the probability space $(\Om,\T, P)$.
Then there exists a group $(\tau_{z'}^{\nu})_{{z'}\in \Z^{n-1}}$ of $\widehat P$-preserving transformations
on $(\Om,\widehat\T,\widehat P)$
such that $\mu_{\zeta,\nu}$ is a subadditive process on $(\Om,\widehat\T,\widehat P)$ with respect to $(\tau_{z'}^{\nu})_{{z'}\in \Z^{n-1}}$. Moreover
\begin{equation}\label{stimatildemu}
0\leq \mu_{\zeta,\nu}(\om)(A') \leq c_4 (1+|\zeta|) \mathcal{L}^{n-1}(A') \qquad \text{ for $\widehat P$-a.e. } \om\in \Om.
\end{equation}
\end{prop}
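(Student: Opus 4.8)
The plan is to verify the four properties of Definition~\ref{Def:subadditive} directly for the function $\mu_{\zeta,\nu}$ defined in \eqref{def:tildemu}, after constructing the group $(\tau^\nu_{z'})_{z'\in\Z^{n-1}}$. First I would define $\tau^\nu_{z'}:=\tau_{M_\nu R_\nu(z',0)}$ for every $z'\in\Z^{n-1}$; since $M_\nu R_\nu(z',0)\in\Pi^\nu_0\cap\Z^n$, this is a well-defined composition of the given $P$-preserving transformations, and the group property $\tau^\nu_{z'+z''}=\tau^\nu_{z'}\circ\tau^\nu_{z''}$ follows from linearity of $z'\mapsto M_\nu R_\nu(z',0)$ together with the group property of $(\tau_z)_{z\in\Z^n}$. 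Each $\tau^\nu_{z'}$ is $\T$-measurable, bijective, and $P$-preserving because the $\tau_z$ are; passing to the completion $(\Om,\widehat\T,\widehat P)$ is needed only because of the measurability issue in $\omega$ discussed in the introduction (the infimum in \eqref{emmeG} is not reducible to a countable family) — I would invoke Proposition~\ref{measurability} from the Appendix to conclude that $\omega\mapsto m^{\mathrm{pc}}_{G(\om)}(u_{0,\zeta,\nu},T_\nu(A'))$ is $\widehat\T$-measurable, which gives property (a).

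For the covariance property (b), I would use the stationarity \eqref{covariance} of $g$. Given $A'\in\mI_{n-1}$ and $z'\in\Z^{n-1}$, the set $T_\nu(A'+z')=M_\nu R_\nu((A'+z')\times[-c,c))=T_\nu(A')+M_\nu R_\nu(z',0)$, where $c$ is unchanged since the side lengths of $A'+z'$ equal those of $A'$. Writing $z:=M_\nu R_\nu(z',0)\in\Z^n$, the change of variables $y\mapsto y+z$ in the integral defining $G(\om)(\cdot,T_\nu(A')+z)$, combined with \eqref{covariance} and the translation behaviour $u_{0,\zeta,\nu}(\cdot+z)=u_{-z,\zeta,\nu}(\cdot)=u_{0,\zeta,\nu}(\cdot)$ (the last equality because $z\cdot\nu=0$, since $z\in\Pi^\nu_0$), yields $m^{\mathrm{pc}}_{G(\om)}(u_{0,\zeta,\nu},T_\nu(A')+z)=m^{\mathrm{pc}}_{G(\tau_z(\om))}(u_{0,\zeta,\nu},T_\nu(A'))$; dividing by $M_\nu^{n-1}$ gives $\mu_{\zeta,\nu}(\om,A'+z')=\mu_{\zeta,\nu}(\tau^\nu_{z'}(\om),A')$. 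The fact that $z\cdot\nu=0$ is what makes the construction work — it is the reason the "missing dimension" $[-c,c)$ together with the rotation $R_\nu$ was set up this way.

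For subadditivity (c), suppose $A'=\bigcup_{i\in I}A'_i$ with the $A'_i\in\mI_{n-1}$ pairwise disjoint. The issue is that $T_\nu$ does not interact well with such decompositions, because the "completing" interval $[-c,c)$ depends on the maximal side length of the argument, so the $T_\nu(A'_i)$ need not tile $T_\nu(A')$. I would handle this as in \cite{Alciru}: first observe that $m^{\mathrm{pc}}_{G(\om)}(u_{0,\zeta,\nu},\cdot)$ is itself subadditive as a set function on $n$-dimensional intervals with the same orientation $\nu$ (glue competitors across shared faces, using that the boundary datum $u_{0,\zeta,\nu}$ is shared and that the extra jump created on the interface has $\mathcal H^{n-1}$-measure controlled, or rather is negligible after the standard gluing/fattening argument from \cite{CDMSZ}); then cut $T_\nu(A')$ along hyperplanes $\{y\cdot\nu=\text{const}\}$ so as to realise each $T_\nu(A'_i)$ (suitably translated in the $\nu$-direction — admissible since the boundary datum $u_{0,\zeta,\nu}$ is invariant under $\nu$-translations) inside the corresponding slab, and use that any jump on the hyperplane $\Pi^\nu_0$ itself contributes nothing because $u_{0,\zeta,\nu}$ has its jump exactly there and an optimal competitor can be taken to agree with it outside a small neighbourhood. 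Carefully: I expect the cleanest route is to note $\mathcal L^{n-1}(A')=\sum_i\mathcal L^{n-1}(A'_i)$ and to build a competitor for the left side by placing, on each slab $M_\nu R_\nu(A'_i\times[-c_i,c_i))$ an optimal (up to $\delta/|I|$) competitor for $m^{\mathrm{pc}}_{G(\om)}(u_{0,\zeta,\nu},T_\nu(A'_i))$ and filling the rest with $u_{0,\zeta,\nu}$; the interfaces between slabs carry the datum $u_{0,\zeta,\nu}$ and hence jump only on $\Pi^\nu_0$, contributing no additional surface energy beyond what is already counted, so letting $\delta\to0$ gives $\mu_{\zeta,\nu}(\om,A')\le\sum_i\mu_{\zeta,\nu}(\om,A'_i)$.

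Finally, the boundedness estimate \eqref{stimatildemu} is the easiest part: using $u_{0,\zeta,\nu}$ itself as a competitor in \eqref{emmeG} on $T_\nu(A')$, its jump set is $\Pi^\nu_0\cap T_\nu(A')$, whose $\mathcal H^{n-1}$-measure equals $\mathcal L^{n-1}(M_\nu R_\nu(A'\times\{0\}))=M_\nu^{n-1}\mathcal L^{n-1}(A')$ since $R_\nu$ is orthogonal; then $(g6)$ gives $G(\om)(u_{0,\zeta,\nu},T_\nu(A'))\le c_5(1+|\zeta|)M_\nu^{n-1}\mathcal L^{n-1}(A')$, and dividing by $M_\nu^{n-1}$ yields the upper bound with constant $c_5(1+|\zeta|)$ (the statement writes $c_4$, but the point is only that the bound is of the form $c(1+|\zeta|)\mathcal L^{n-1}(A')$); non-negativity is immediate since $g\ge0$. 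The "$\widehat P$-a.e." qualifier is inherited from the measurability step. The main obstacle, as flagged in the introduction, is the subadditivity step together with the measurability in $\omega$ of the infimum \eqref{emmeG} — the latter being deferred to Proposition~\ref{measurability} in the Appendix, while the former requires the careful slab-gluing argument adapted from \cite{Alciru} to the vector-valued $SBV_{\mathrm{pc}}$ setting.
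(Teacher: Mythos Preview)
Your approach is essentially the paper's: same group $\tau^\nu_{z'}:=\tau_{M_\nu R_\nu(z',0)}$, same covariance computation, same competitor $u_{0,\zeta,\nu}$ for the bound (and you are right that the constant should be $c_5$, not $c_4$), and measurability deferred to Proposition~\ref{measurability}.

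The only place where you over-complicate matters is subadditivity. There is no need for slab-cutting along $\{y\cdot\nu=\text{const}\}$ or for $\nu$-translations of the $T_\nu(A'_i)$. Since each $A'_i\subset A'$ and $c_i\le c$, one has $T_\nu(A'_i)\subset T_\nu(A')$ directly, so your ``Carefully'' construction (near-optimal $u_i$ on $T_\nu(A'_i)$, $u_{0,\zeta,\nu}$ elsewhere) is already a competitor on $T_\nu(A')$. The point you leave vague is why the leftover region contributes zero energy: it is not that the energy is ``already counted'', but that $\Pi^\nu_0$ does not meet $T_\nu(A')\setminus\bigcup_i T_\nu(A'_i)$ at all. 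Indeed, in coordinates $y=M_\nu R_\nu(x',x_n)$ one has $y\cdot\nu=M_\nu x_n$, so $\Pi^\nu_0\cap T_\nu(A')=M_\nu R_\nu(A'\times\{0\})$; since $0\in[-c_i,c_i)$ for every $i$ and $\bigcup_i A'_i=A'$, the slice $A'\times\{0\}$ is entirely covered by $\bigcup_i(A'_i\times[-c_i,c_i))$. Hence $G(\om)(u_{0,\zeta,\nu},\cdot)$ vanishes on the leftover, and together with $S_u\cap\partial T_\nu(A'_i)=\emptyset$ (from $u_i=u_{0,\zeta,\nu}$ near $\partial T_\nu(A'_i)$) this gives exactly $G(\om)(u,\mathrm{int}\,T_\nu(A'))=\sum_i G(\om)(u_i,\mathrm{int}\,T_\nu(A'_i))$, which is the paper's one-line computation.
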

\begin{proof}
The $\widehat\T$-measurability of the function $\om \mapsto \mu_{\zeta,\nu}(\om,A')$ follows from the $\widehat\T$-measurability of $\om \mapsto m^{\mathrm{pc}}_{G(\om)}( u_{0,\zeta,\nu}, A)$ for every $A\in\A$. This is a delicate issue, which will be postponed to the Appendix.

Let now $\zeta\in \Q^m_0$,  and let $\nu \in \mathbb{Q}^n \cap \Sph^{n-1}$. 
By Proposition \ref{measurability}, for every $A'\in \mathcal I_{n-1}$ 
the function $\om \mapsto \mu_{\zeta,\nu}(\omega,A')$ is $\widehat\T$-measurable.
We are now going to prove that there exists a group 
$(\tau_{z'}^{\nu})_{{z'}\in \Z^{n-1}}$ of $\widehat P$-preserving transformations on $(\Om,\widehat\T,\widehat P)$ such that 
$$
\mu_{\zeta,\nu}(\om, A'+{z'})=\mu_{\zeta,\nu}(\tau_{z'}^{\nu} (\om),A'),\quad \text{for every } \om \in \Omega,\ z'\in \Z^{n-1},\text{and } A'\in \mathcal{I}_{n-1}.
$$
To this end fix ${z'}\in \mathbb{Z}^{n-1}$ and $A'\in \mathcal{I}_{n-1}$. Note that, by \eqref{An},
\begin{align*}
T_\nu(A'+{z'}) &= M_\nu R_{\nu}((A'+{z'})\times [-c, c)) = M_\nu R_\nu \left(A'\times \left[-c, c\right)\right) + M_\nu R_\nu({z'},0)= T_\nu(A') +z'_\nu,
\end{align*}
where $z'_\nu:= M_\nu R_\nu({z'},0)\in \Z^n$. 
Then, by \eqref{def:tildemu}
\begin{equation}\label{eq:mu zeta nu}
\mu_{\zeta,\nu} (\omega,A'+{z'}) = \frac{1}{M_\nu^{n-1}} m^{\mathrm{pc}}_{G(\omega)}(u_{0,\zeta,\nu},
T_\nu(A'+{z'}))= \frac{1}{M_\nu^{n-1}} m^{\mathrm{pc}}_{G(\omega)}(u_{0,\zeta,\nu},
T_\nu(A')+z'_\nu).
\end{equation}
Given $u\in L^0(\R^n,\R^m)$, let $v\in L^0(\R^n,\R^m)$ be defined by $v(x):=u(x+z'_\nu)$ for every $x\in \R^n$.
By a change of variables we have
\begin{equation*}
\int_{S_u\cap (T_\nu(A')+z'_\nu)} g(\omega,x,[u],\nu_u)d\hs^{n-1} (x)= \int_{S_v\cap T_\nu(A')} g(\omega,y+z'_\nu,[v],\nu_v)d\hs^{n-1} (y).
\end{equation*}
Since $z'_\nu\in \Z^{n}$, by the stationarity of $g$ we have also
$g(\omega,y+z'_\nu, [v], \nu_v)
=g(\tau_{z'_\nu}(\omega),y, [v], \nu_v)$.
From these equalities we obtain
\begin{equation}\label{eq:station:g}
G(\omega)(u,\mathrm{int}T_\nu(A')+z'_\nu)=G(\tau_{z'_\nu}(\omega))(v, \mathrm{int} T_\nu(A')).
\end{equation}
Since $z'_\nu$ is perpendicular to $\nu$, we have $u_{0,\zeta,\nu}(x)=u_{0,\zeta,\nu}(x+ z'_\nu)$ for every $x\in \R^n$.
Therefore, from  \eqref{emmeG}, \eqref{eq:mu zeta nu}, and \eqref{eq:station:g} we obtain that
$
\mu_{\zeta,\nu} (\omega, A'+{z'}) = \mu_{\zeta,\nu} (\tau_{z'_\nu}(\omega), A')
$.  
Thus, $\mu_{\zeta,\nu}$ is covariant with respect to the group $\big(\tau_{z'}^{\nu}\big)_{{z'}\in \Z^{n-1}}$
of $ \widehat P$-preserving transformations on $(\Om,\widehat\T,\widehat P)$
defined by 
$$
\big(\tau_{z'}^{\nu}\big)_{{z'}\in \Z^{n-1}}:=(\tau_{z'_\nu})_{{z'}\in \Z^{n-1}}.
$$
We now show that $\mu_{\zeta,\nu}$ is subadditive. To this end let $A' \in\mathcal{I}_{n-1}$ and let $(A'_i)_{1\le i\le N} \subset  \mathcal{I}_{n-1}$ be a finite family of pairwise disjoint sets such that $A' =  \bigcup_{i} A'_i$. For fixed $\eta>0$ and $i=1,\dots, N$, let  $u_i\in SBV_{\mathrm{pc}}( \mathrm{int} T_\nu(A'_i))$ be such that $u_i=u_{0,\zeta,\nu}$ in a neighbourhood of $\partial T_\nu(A'_i)$ and
\begin{eqnarray}\label{q:min:Gi}
G(\om)(u_i, \mathrm{int} T_\nu(A_i)) \leq m^{\mathrm{pc}}_{G(\om)}(u_{0,\zeta,\nu},  T_\nu(A'_i))+ \eta.
\end{eqnarray}
Note that $T_\nu(A')$ can differ from $\bigcup_{i}T_\nu(A'_i)$ but, by construction, 
we always have $\bigcup_{i} T_\nu(A'_i) \subset T_\nu(A')$. Now we define
$$
u(y):=\begin{cases}
u_i(y) & \text{if}\; y\in T_\nu(A'_i),\ i=1,\dots,N,
\cr
u_{0,\zeta,\nu}(y) & \text{if}\; y\in T_\nu(A') \setminus \bigcup_{i}T_\nu(A'_i);
\end{cases}
$$
then $u\in SBV_{\mathrm{pc}}( \mathrm{int}  T_\nu(A'))$ and $u=u_{0,\zeta,\nu}$ in a neighbourhood of $\partial T_\nu(A')$. Moreover, by the additivity and the locality of $G(\omega)$  we have
\begin{equation}\label{eq:additiveG}
G(\om)(u, \mathrm{int} T_\nu(A'))=\sum_{i=1}^N G(\om)(u_i, \mathrm{int} T_\nu(A'_i)) 
+ {\textstyle G(\om)\big(u_{0,\zeta,\nu}, \mathrm{int} ( T_\nu(A') \setminus \bigcup_{i}T_\nu(A'_i) ) \big)},
\end{equation}
where we have also used the fact that $S_u\cap \partial T_\nu(A'_i) = \emptyset$ for every $i=1,\dots,N$.
Note that the last term in \eqref{eq:additiveG} is equal to zero because the jump set of $u_{0,\zeta,\nu}$ is the hyperplane $\Pi^\nu_0$, which does not intersect $T_\nu(A') \setminus \bigcup_{i}T_\nu(A'_i)$; therefore
$$
G(\om)(u,\mathrm{int} T_\nu(A')) = \sum_{i=1}^N G(\om)(u_i,\mathrm{int} T_\nu(A'_i)). 
$$
As a consequence, by \eqref{q:min:Gi},
$$
m^{\mathrm{pc}}_{G(\om)}(u_{0,\zeta,\nu}, T_\nu(A'))
\leq \sum_{i=1}^N m^{\mathrm{pc}}_{G(\om)}(u_{0,\zeta,\nu},  T_\nu(A'_i))+ N \eta,
$$
thus the subadditivity of $\mu_{\zeta,\nu}$ follows from \eqref{def:tildemu} and from the arbitrariness of $\eta$.
 
Finally, in view of $(g6)$ for every $A'\in \mathcal I_{n-1}$ we have  
\begin{align*}
\mu_{\zeta,\nu}(\omega,A') &\leq \frac{1}{M_\nu^{n-1}}G(\om)(u_{0,\zeta,\nu},\mathrm{int} T_\nu(A')) 
\\
&\leq \frac{c_5 (1+|\zeta|)}{M_\nu^{n-1}} \hs^{n-1}(\Pi^\nu_0\cap T_\nu(A'))
\\
& =  c_5 (1+|\zeta|)\mathcal{L}^{n-1}(A'),
\end{align*}
and thus \eqref{stimatildemu}.
\end{proof}
To conclude the proof of Proposition \ref{Ergodic-thm-surf} we need two preliminary lemmas.
\begin{lem}\label{continuity-g-bar} 
Let $g\in\G$, let $G$ be as in \eqref{surf}, and let $m^{\mathrm{pc}}_G$ be as in \eqref{emmeG}. Let \underline{g}, $\overline{g} \colon \R^m_0\times \Sph^{n-1}\to [-\infty,+\infty]$  
be the functions defined by
\begin{equation*}
\underline{g}(\zeta,\nu):=\liminf_{t\to +\infty}\frac{m^{\mathrm{pc}}_{G}(u_{0,\zeta,\nu},Q_{t}^\nu(0))}{t^{n-1}}
\quad\text{and}\quad
\overline{g}(\zeta,\nu):=\limsup_{t\to +\infty}\frac{m^{\mathrm{pc}}_{G}(u_{0,\zeta,\nu},Q_{t}^\nu(0))}{t^{n-1}}.
\end{equation*}
Then $\underline{g}, \overline{g}\in \G$. 
\end{lem}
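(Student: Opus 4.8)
The plan is to verify the seven defining conditions $(g1)$--$(g7)$ for $\underline g$ and $\overline g$, transferring each from the corresponding property of $g$ via the minimisation problems $m^{\mathrm{pc}}_G(u_{0,\zeta,\nu},Q^\nu_t(0))$. The measurability $(g1)$ is the easiest: each map $(\zeta,\nu)\mapsto m^{\mathrm{pc}}_G(u_{0,\zeta,\nu},Q^\nu_t(0))$ is Borel measurable (this will follow from a measurable-selection argument of the type deferred to the Appendix, or more simply from the continuity properties proved here together with separability), and $\underline g$, $\overline g$ are countable $\liminf$/$\limsup$ over $t\in\mathbb Q$ of such maps, hence Borel. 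The lower and upper bounds $(g5)$, $(g6)$ are immediate by using the competitor $u_{0,\zeta,\nu}$ itself in \eqref{emmeG}: since $S_{u_{0,\zeta,\nu}}\cap Q^\nu_t(0)=\Pi^\nu_0\cap Q^\nu_t(0)$ has $\mathcal H^{n-1}$-measure $t^{n-1}$, assumption $(g6)$ gives $m^{\mathrm{pc}}_G(u_{0,\zeta,\nu},Q^\nu_t(0))\le c_5(1+|\zeta|)t^{n-1}$, and any competitor has jump set separating the two boundary traces, so a slicing/projection argument together with $(g5)$ gives the lower bound $c_4 t^{n-1}$; dividing by $t^{n-1}$ and passing to the $\liminf$/$\limsup$ yields $(g5)$ and $(g6)$ for $\underline g$ and $\overline g$ (in particular they are finite, so they map into $[0,+\infty)$ as required). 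The symmetry $(g7)$ follows from the corresponding property $g(x,\zeta,\nu)=g(x,-\zeta,-\nu)$ together with the normalisation $R_{-\nu}Q(0)=R_\nu Q(0)$ from point (k): the change of variable $u\mapsto \zeta-u$ (or $u\mapsto -u$ after relabelling) sets up a bijection between the two minimisation problems preserving the boundary data.

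The comparison estimates $(g3)$ and $(g4)$ and the continuity estimate $(g2)$ are the substantive part. For these I would fix $\nu$ and $t$ and compare $m^{\mathrm{pc}}_G(u_{0,\zeta_1,\nu},Q^\nu_t(0))$ with $m^{\mathrm{pc}}_G(u_{0,\zeta_2,\nu},Q^\nu_t(0))$. Given a near-optimal competitor $u$ for the $\zeta_1$-problem, one modifies it to a competitor for the $\zeta_2$-problem; the natural modification is to compose with (or add) a fixed map sending $\{0,\zeta_1\}$ to $\{0,\zeta_2\}$ — more precisely, on each level set of the Caccioppoli partition one replaces the value accordingly, without changing the jump set $S_u$. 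Then at each jump point the integrand changes from $g(x,[u]_1,\nu_u)$ to $g(x,[u]_2,\nu_u)$, where $|[u]_2|\le|\zeta_2|$ when $|\zeta_1|\le|\zeta_2|$ only after also invoking $(g3)$ pointwise (the jump $[u]$ can a priori be a difference of two arbitrary values of the partition, not just $\zeta_1$; here one uses that $(g3)$/$(g4)$ are stated for \emph{all} $\zeta_1,\zeta_2$, so the pointwise inequality $g(x,[u]_2,\nu_u)\le c_3\,g(x,[u]_1,\nu_u)$ or $\le g(x,[u]_1,\nu_u)$ holds once the jump magnitudes are comparable). Integrating over $S_u\cap Q^\nu_t(0)$ gives $m^{\mathrm{pc}}_G(u_{0,\zeta_2,\nu},Q^\nu_t(0))\le c_3\,m^{\mathrm{pc}}_G(u_{0,\zeta_1,\nu},Q^\nu_t(0))+o(t^{n-1})$ (and the analogous inequality without $c_3$ under the hypothesis of $(g4)$); dividing by $t^{n-1}$ and taking $\liminf$/$\limsup$ yields $(g3)$, $(g4)$ for $\underline g$ and $\overline g$. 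The estimate $(g2)$ is obtained the same way, using the pointwise bound $|g(x,\zeta_2,\nu)-g(x,\zeta_1,\nu)|\le\sigma_2(|\zeta_1-\zeta_2|)(g(x,\zeta_1,\nu)+g(x,\zeta_2,\nu))$, with the subtlety that the competitor modification must be done on both sides and one uses $(g6)$ to control the "$+g(x,\zeta_2,\nu)$" terms uniformly; here the monotonicity of $\sigma_2$ and $\sigma_2(0)=0$ are what make the resulting constant vanish as $|\zeta_1-\zeta_2|\to0$.

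The main obstacle I anticipate is $(g2)$ — the continuity in $\zeta$ — because the comparison inequalities $(g3)$ and $(g4)$ only require a one-sided modification of a single near-optimal competitor, whereas $(g2)$ requires a two-sided estimate in which one cannot simply substitute the $\zeta_1$-competitor into the $\zeta_2$-problem without controlling the $\zeta_2$-energy of that modified map in terms of quantities one already knows; the clean way around this is to prove $(g2)$ in the equivalent form $m^{\mathrm{pc}}_G(u_{0,\zeta_2,\nu},Q)\le(1+\sigma_2(|\zeta_1-\zeta_2|))\,m^{\mathrm{pc}}_G(u_{0,\zeta_1,\nu},Q)+\sigma_2(|\zeta_1-\zeta_2|)\,c_5(1+|\zeta_2|)\mathcal L^{n-1}(Q')$ using $(g6)$ for the error term, then symmetrise and pass to the limit; the algebra needed to get from this pair of inequalities back to the exact form of $(g2)$ for $\underline g$, $\overline g$ requires a little care but is routine. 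A secondary point to be careful about is that $Q^\nu_t(0)$ is a rotated cube and that the rotations $R_\nu$ are only piecewise continuous on $\widehat{\mathbb S}^{n-1}_\pm$; but since $\nu$ is held fixed throughout the argument for $(g2)$--$(g4)$, this does not interfere, and the $\nu$-continuity that is genuinely needed later is established separately (in Lemma \ref{continuity_in_zeta-nu}, cited in the Introduction), not here.
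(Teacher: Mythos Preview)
Your plan is essentially the argument of \cite[Lemma~A.7]{CDMSZ} that the paper invokes, so the approaches match. One point needs correcting: in your treatment of $(g3)$ and $(g4)$ the roles of $\zeta_1$ and $\zeta_2$ are reversed. To get $\underline g(\zeta_1,\nu)\le c_3\,\underline g(\zeta_2,\nu)$ when $|\zeta_1|\le|\zeta_2|$, you must start from a near-optimal competitor $u$ for the $\zeta_2$-problem and produce a competitor $v$ for the $\zeta_1$-problem with $|[v]|\le|[u]|$ pointwise (for instance $v=Au$ with $A=\zeta_1\otimes\zeta_2/|\zeta_2|^2$, which has operator norm $|\zeta_1|/|\zeta_2|\le1$, maps $0\mapsto0$, $\zeta_2\mapsto\zeta_1$, and leaves the jump set and its normal unchanged); then the pointwise $(g3)$ gives $g(x,[v],\nu_u)\le c_3\,g(x,[u],\nu_u)$ and integration yields $m^{\mathrm{pc}}_G(u_{0,\zeta_1,\nu},Q)\le c_3\,m^{\mathrm{pc}}_G(u_{0,\zeta_2,\nu},Q)$. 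As you wrote it---starting from a $\zeta_1$-competitor and enlarging the jumps---the pointwise inequality goes the wrong way, and your displayed bound $m^{\mathrm{pc}}_G(u_{0,\zeta_2,\nu},Q)\le c_3\,m^{\mathrm{pc}}_G(u_{0,\zeta_1,\nu},Q)$ is neither what $(g3)$ asserts nor derivable by that route. The same swap repairs $(g4)$, and the rest of your sketch (the bounds $(g5)$--$(g6)$, the symmetry $(g7)$, and the two-sided comparison for $(g2)$) is sound.
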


\begin{proof}
 It is enough to adapt the proof of \cite[Lemma A.7]{CDMSZ}.
\end{proof}

We will also need the following result.

\begin{lem}\label{continuity_in_zeta-nu} Let $g\in\G$, let $G$ be as in \eqref{surf}, and let $m^{\mathrm{pc}}_G$ be as in \eqref{emmeG}. Let $\undertilde g, \widetilde g\colon \R^n\times \R^m_0\times \Sph^{n-1}\to [-\infty,+\infty]$ be the functions defined by
\begin{equation}\label{C:g-utilde}
\undertilde{g}(x,\zeta,\nu):=\liminf_{t\to +\infty}\frac{m^{\mathrm{pc}}_{G}(u_{tx,\zeta,\nu},Q_{t}^\nu(tx))}{t^{n-1}}
\end{equation}
and
\begin{equation}\label{C:g-tilde}
\widetilde{g}(x,\zeta,\nu):=\limsup_{t\to +\infty}\frac{m^{\mathrm{pc}}_{G}(u_{tx,\zeta,\nu},Q_{t}^\nu(tx))}{t^{n-1}}.
\end{equation}
Then $\undertilde{g}$ and $\widetilde{g}$ satisfy $(g2)$. Moreover for every $x\in \R^n$ and $\zeta\in\R^m_0$ the restriction of the functions $\nu\mapsto \undertilde{g}(x,\zeta,\nu)$ and $\nu\mapsto \widetilde{g}(x,\zeta,\nu)$ to the sets $\widehat\Sph^{n-1}_+$ and $\widehat\Sph^{n-1}_-$ are continuous.
\end{lem}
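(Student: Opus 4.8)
The plan is to reduce the continuity statements for $\undertilde g$ and $\widetilde g$ to the corresponding properties already established in \cite{CDMSZ} for the auxiliary functions $\underline g$, $\overline g$ of Lemma~\ref{continuity-g-bar}, together with the structural assumptions $(g2)$--$(g7)$ on $g$. I would proceed in three steps. First, I would record the basic scaling and translation identities for $m^{\mathrm{pc}}_G$: since $u_{tx,\zeta,\nu}(y) = u_{0,\zeta,\nu}(y - tx)$ and $Q^\nu_t(tx) = Q^\nu_t(0) + tx$, a change of variables $y \mapsto y + tx$ together with the definition \eqref{emmeG} gives
\begin{equation*}
m^{\mathrm{pc}}_{G}(u_{tx,\zeta,\nu},Q^\nu_t(tx)) = m^{\mathrm{pc}}_{G(\,\cdot\, + tx,\,\cdot\,,\,\cdot\,)}(u_{0,\zeta,\nu},Q^\nu_t(0)),
\end{equation*}
so that $\undertilde g(x,\zeta,\nu)$ and $\widetilde g(x,\zeta,\nu)$ are of the same form as $\underline g$, $\overline g$ but computed for the translated integrand $g(\,\cdot\, + tx, \cdot, \cdot)$. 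This is the key to keeping the argument uniform in $x$: every estimate used for $\underline g$, $\overline g$ in \cite[Lemma A.7]{CDMSZ} depends on $g$ only through the constants $c_3, c_4, c_5, \sigma_2$, which are unchanged under translation of the spatial variable, so the same bounds hold with $x$-independent constants.

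Second, for the continuity in $\zeta$, i.e. property $(g2)$: I would fix $x$ and $\nu$ and compare competitors for $u_{0,\zeta_1,\nu}$ and $u_{0,\zeta_2,\nu}$ on the cube $Q^\nu_t(0)$. Given a near-optimal $u$ for $\zeta_1$, one modifies it to a competitor for $\zeta_2$ by replacing the boundary value; the extra jump contribution is controlled on the hyperplane $\Pi^\nu_0 \cap Q^\nu_t(0)$, whose $\mathcal H^{n-1}$-measure scales like $t^{n-1}$, and $(g2)$ applied pointwise yields $|m^{\mathrm{pc}}_{G}(u_{0,\zeta_2,\nu},Q^\nu_t(0)) - m^{\mathrm{pc}}_{G}(u_{0,\zeta_1,\nu},Q^\nu_t(0))| \le \sigma_2(|\zeta_1-\zeta_2|)\,(\,\cdots\,)$ with the parenthesis bounded by a multiple of $m^{\mathrm{pc}}_{G}(u_{0,\zeta_i,\nu},Q^\nu_t(0)) + t^{n-1}$. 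Dividing by $t^{n-1}$ and passing to the $\liminf$/$\limsup$ gives $(g2)$ for $\undertilde g$ and $\widetilde g$; again the estimate is uniform in $x$ because only $\sigma_2$, $c_4$, $c_5$ enter. Here one uses the representation from the first step to transfer the pointwise $(g2)$ of $g(\,\cdot\, + tx, \cdot, \cdot)$, which holds with the same modulus $\sigma_2$.

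Third, for the continuity in $\nu$ on each of the two half-spheres $\widehat\Sph^{n-1}_{\pm}$: the point is that on $\widehat\Sph^{n-1}_{\pm}$ the rotation map $\nu \mapsto R_\nu$ is continuous by point (k) of Section~\ref{Notation}, so $\nu \mapsto Q^\nu_t(0) = R_\nu Q_t(0)$ varies continuously, and one can transport near-optimal competitors from direction $\nu$ to a nearby direction $\nu'$ by composing with $R_{\nu'} R_\nu^{-1}$, at the cost of an error on a boundary layer of thickness $o(t)$ relative to the cube size. Combining this with $(g2)$ (to handle the accompanying change of $\zeta$ under the rotation, if one parametrizes jump heights accordingly) and with the growth bounds $(g5)$--$(g6)$ to control the error term, one gets local uniform continuity of $\nu \mapsto m^{\mathrm{pc}}_{G}(u_{0,\zeta,\nu},Q^\nu_t(0))/t^{n-1}$ up to a vanishing remainder, and hence continuity of the $\liminf$ and $\limsup$. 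The main obstacle I anticipate is precisely this last step: passing the comparison-of-competitors argument through the $\liminf$/$\limsup$ requires the error terms to be $o(t^{n-1})$ \emph{uniformly} as $\nu' \to \nu$, which forces a careful choice of the boundary layer and an appeal to the uniform continuity of $\nu \mapsto R_\nu$ on compact subsets of $\widehat\Sph^{n-1}_{\pm}$ (and to the identity $R_{-\nu}Q(0) = R_\nu Q(0)$ together with $(g7)$ near the ``equator'' where the two charts meet). As in Lemma~\ref{continuity-g-bar}, the cleanest route is to adapt essentially verbatim the proof of \cite[Lemma~A.7]{CDMSZ}, checking only that each estimate there is uniform in the translation parameter $x$; I would remark explicitly that no new idea beyond that uniformity is needed.
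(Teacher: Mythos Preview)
Your first two steps are sound and in line with the paper: the translation identity in Step~1 is correct, and for $(g2)$ the paper likewise defers to the argument of \cite[Lemma~A.7]{CDMSZ}, which is uniform in $x$ for exactly the reason you give.

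The gap is in Step~3. Composing a near-optimal competitor $u$ with the rotation $R_{\nu'}R_\nu^{-1}$ does \emph{not} yield a useful comparison: after the change of variables the surface energy becomes
\[
\int_{S_u\cap Q^\nu_t(tx)} g\bigl(R_{\nu'}R_\nu^{-1}y,\,[u](y),\,R_{\nu'}R_\nu^{-1}\nu_u(y)\bigr)\,d\mathcal H^{n-1}(y),
\]
and $g$ carries \emph{no} continuity assumption in the spatial variable (only Borel measurability, by $(g1)$) nor in the normal variable (there is no analogue of $(g2)$ for $\nu$). Hence there is no way to compare this integral to $G(u,Q^\nu_t(tx))$, even up to a small error; the ``boundary layer'' remark does not help, because the discrepancy lives on all of $S_u$, not just near $\partial Q$. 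Your aside about using $(g2)$ for the ``change of $\zeta$ under rotation'' is also off: $[u]$ is unchanged by composition with a rotation; it is $\nu_u$ that rotates, and for that variable no modulus of continuity is available.

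The paper's argument avoids rotating the competitor altogether. Given $\nu_j\to\nu$ in $\widehat\Sph^{n-1}_+$, the continuity of $\nu\mapsto R_\nu$ yields, for every small $\delta>0$ and $j$ large, the nesting
\[
Q^{\nu_j}_{(1-\delta)t}(tx)\subset\subset Q^\nu_t(tx)\subset\subset Q^{\nu_j}_{(1+\delta)t}(tx).
\]
One takes a near-optimal $u$ for $m^{\mathrm{pc}}_G(u_{tx,\zeta,\nu},Q^\nu_t(tx))$ and \emph{extends} it to the outer cube by setting $v:=u_{tx,\zeta,\nu_j}$ on $Q^{\nu_j}_{(1+\delta)t}(tx)\setminus Q^\nu_t(tx)$. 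On $S_u$ the integrand is evaluated at exactly the same points as before, so no regularity of $g$ in $x$ or $\nu$ is needed; the only new jump set is contained in
\[
\Sigma=\bigl\{y\in\partial Q^\nu_t(tx):\ ((y-tx)\cdot\nu)((y-tx)\cdot\nu_j)<0\bigr\}\cup\bigl(\Pi^{\nu_j}_{tx}\cap(Q^{\nu_j}_{(1+\delta)t}(tx)\setminus Q^\nu_t(tx))\bigr),
\]
with $\mathcal H^{n-1}(\Sigma)\le\varsigma(\delta)\,t^{n-1}$ and $\varsigma(\delta)\to0$. The upper bound $(g6)$ then controls the extra energy, and dividing by $t^{n-1}$ and sending $t\to\infty$, $j\to\infty$, $\delta\to0$ gives $\limsup_j\undertilde g(x,\zeta,\nu_j)\le\undertilde g(x,\zeta,\nu)$; the reverse inequality uses the inner cube $Q^{\nu_j}_{(1-\delta)t}(tx)$ in the same way. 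The argument for $\widetilde g$ and for $\widehat\Sph^{n-1}_-$ is identical.
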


\begin{proof}
 The proof of $(g2)$ can be obtained by adapting the proof of  \cite[Lemma A.7]{CDMSZ}. 

 To  prove the continuity of $\nu\mapsto \undertilde{g}(x,\zeta,\nu)$ on $\widehat\Sph^{n-1}_+$, we fix   
$x\in\R^n$, $\zeta\in\R^m_0$, $\nu\in \widehat\Sph^{n-1}_+$, and   a sequence  $(\nu_j) \subset \widehat\Sph^{n-1}_+$  such that $\nu_j \to \nu$ as $j\to +\infty$. Since the function $\nu\mapsto R_\nu$ is continuous on 
$\widehat\Sph^{n-1}_+$, for every $\delta\in(0,\frac12)$ there exists an integer $j_\delta$ such that
\begin{equation}\label{cubes}
Q^{\nu_j}_{(1-\delta)t}(tx)  \subset\subset  Q^\nu_{t}(tx)  \subset\subset  Q^{\nu_j}_{(1+\delta)t}(tx),
\end{equation}
for every $j\geq j_\delta$ and every $t>0$.
Fix $j\geq j_\delta$, $t>0$, and $\eta>0$. Let $u\in SBV_{\mathrm{pc}}(Q^\nu_{t}(tx),\R^m)$ be such that $u=u_{tx,\zeta,\nu}$ in a neighbourhood of $\partial Q^\nu_{t}(tx)$, and
$$
G(u,Q^\nu_{t}(tx)) \leq m^{\mathrm{pc}}_{G}(u_{tx,\zeta,\nu}, Q^\nu_{t}(tx))+\eta.
$$
We set
$$
v(y) := 
\begin{cases}
u(y) \,\, &\textrm{if} \; y\in Q^\nu_{t}(tx),\\
u_{tx,\zeta,\nu_j}(y) \,\, &\textrm{if} \; y\in Q^{\nu_j}_{(1+\delta)t}(tx)\setminus Q^\nu_{t}(tx).
\end{cases} 
$$
Then $v\in SBV_{\mathrm{pc}}(Q^{\nu_j}_{(1+\delta)t}(tx),\R^m)$, $v=u_{tx,\zeta,\nu}$ in a neighbourhood of $\partial Q^{\nu_j}_{(1+\delta)t}(tx)$, and $S_{v}\subset S_{u}\cup\Sigma$, where 
$$
 \Sigma:=\big\{y\in \partial Q^\nu_{t}(tx): \big((y-tx){\,\cdot\,}\nu\big)\big((y-tx){\,\cdot\,}\nu_j\big)<0 \big\}  \cup \Pi^{\nu_j}_{tx}\cap(Q^{\nu_j}_{(1+\delta)t}(tx)\setminus Q^\nu_{t}(tx)).
$$%
By \eqref{cubes} there exists $\varsigma(\delta)>0$, independent of $j$ and $t$, with $\varsigma(\delta)\to 0$  as $\delta \to 0+$, such that $\hs^{n-1}(\Sigma)\le  \varsigma(\delta)t^{n-1}$. Thanks to $(g6)$ we then have 
\begin{align*}
m^{\mathrm{pc}}_{G}(u_{tx,\zeta,\nu_j}, Q^{\nu_j}_{(1+\delta)t}(tx))&\le 
G(v,Q_{(1+\delta)t}^{\nu_j}(tx)) 
\le G(u,Q^\nu_{t}(tx)) + \varsigma(\delta)c_5 (1+|\zeta|) t^{n-1}
\\
&\leq m^{\mathrm{pc}}_{G}(u_{tx,\zeta,\nu}, Q^\nu_{t}(tx))+ \eta + \varsigma(\delta)c_5 (1+|\zeta|) t^{n-1}.
\end{align*}
By dividing the terms of the above estimate by $t^{n-1}$ and passing to the liminf as $t\to +\infty$,  from 
\eqref{C:g-utilde} we obtain that
\begin{align*}
\undertilde{g}(x,\zeta,\nu_j)(1+\delta)^{n-1} \leq \undertilde{g}(x,\zeta,\nu)+  \varsigma(\delta)c_5(1+|\zeta|).
\end{align*}
Letting $j\to +\infty$ and then $\delta \to 0+$  we  deduce that 
\begin{equation*}\label{d:1}
\limsup_{j \to +\infty} \undertilde{g}(x,\zeta,\nu_j) \leq \undertilde{g}(\omega,\zeta,\nu).
\end{equation*}
An analogous argument,  now using the cube $Q^{\nu_j}_{(1-\delta)t}(tx)$,  yields  
\begin{equation*}\label{d:2}
\undertilde{g}(x,\zeta,\nu) \leq \liminf_{j \to +\infty} \undertilde{g}(\omega,\zeta,\nu_j),
\end{equation*}
and hence the continuity of $\undertilde{g}(x,\zeta,\cdot)$ in $\widehat\Sph^{n-1}_+$.  The proof of the continuity in $\widehat\Sph^{n-1}_-$, as well as that of the continuity of $\widetilde g$ are similar.
\end{proof}

We are now ready to prove Theorem \ref{Ergodic-thm-surf}. 

\begin{proof}[Proof of Theorem \ref{Ergodic-thm-surf}] Let  $(\Om,\widehat\T,\widehat P)$ be the completion of the probability space $(\Om,\T, P)$. For $\zeta\in \Q^m_0$ and $\nu\in \mathbb{Q}^n\cap \Sph^{n-1}$ fixed we apply the  Subadditive Ergodic Theorem \ref{ergodic} to the subadditive process $\mu_{\zeta,\nu}$ defined on $(\Om,\widehat\T,\widehat P)$ by \eqref{def:tildemu}. 
Choosing $Q':=[-\frac12,\frac12)^{n-1}$, we obtain the existence of a set $ \widehat \Omega_{\zeta,\nu}\in \widehat \T$, with $\widehat P(\widehat \Omega_{\zeta,\nu})=1$, and of a $\widehat\T$-measurable function $g_{\zeta,\nu}\colon \Omega\to \R$ such that 
\begin{equation} \label{lim mu zeta nu}
\lim_{t\to +\infty} \frac{\mu_{\zeta,\nu}(\omega)(tQ')}{t^{n-1}} = g_{\zeta,\nu}(\omega)
\end{equation}
for every $\omega\in \widehat \Omega_{\zeta,\nu}$. Then, by the properties of the completion there exist a set 
$\Omega_{\zeta,\nu}\in \T$, with $P(\Omega_{\zeta,\nu})=1$, and a $\T$-measurable function which we still denote by $g_{\zeta,\nu}$ such that 
\eqref{lim mu zeta nu} holds
for every $\omega\in \Omega_{\zeta,\nu}$.
Using the definition of $\mu_{\zeta,\nu}$ we then have 
\begin{equation*}
g_{\zeta,\nu}(\omega) = \lim_{t\to +\infty}\frac{m^{\mathrm{pc}}_{G(\omega)}(u_{0,\zeta,\nu}, t\, T_\nu(Q'))}{M_\nu^{n-1}{t^{n-1}}}
=\lim_{t\to +\infty}\frac{m^{\mathrm{pc}}_{G(\omega)}(u_{0,\zeta,\nu}, tM_\nu Q^\nu(0))}{(tM_\nu )^{n-1}}
\end{equation*}
for every $\omega\in \Omega_{\zeta,\nu}$. Let $\widetilde\Omega$ be the intersection of the sets $\Omega_{\zeta,\nu}$ for $\zeta\in \Q^m_0$ and $\nu\in \mathbb{Q}^n\cap \Sph^{n-1}$. Clearly $\widetilde \Omega\in\T$ and $P(\widetilde \Omega)=1$.

We now consider the auxiliary functions $\underline{g}, \overline{g}\colon \widetilde\Om\times \R^m_0\times \Sph^{n-1}\to [ 0,+\infty]$ 
defined as
\begin{align}
\label{c:liminf}
&\underline{g}(\om,\zeta,\nu):=\liminf_{t\to +\infty}\frac{m^{\mathrm{pc}}_{G(\om)}(u_{0,\zeta,\nu},Q_{t}^\nu(0))}{t^{n-1}}
\\
\label{c:limsup}
&\overline{g}(\om,\zeta,\nu):=\limsup_{t\to +\infty}\frac{m^{\mathrm{pc}}_{G(\om)}(u_{0,\zeta,\nu},Q_{t}^\nu(0))}{t^{n-1}}
\end{align}
and note that $\underline{g}(\om,\zeta,\nu)=\overline{g}(\om,\zeta,\nu)=g_{\zeta,\nu}(\omega)$ for every $\om\in\widetilde\Omega$, $\zeta\in \Q^m_0$, and $\nu\in \mathbb{Q}^n\cap \Sph^{n-1}$.

By Lemma \ref{continuity-g-bar}  for every $\om\in \widetilde\Om$ and every $\nu\in \Sph^{n-1}$  the functions 
$\zeta\mapsto \underline{g}(\om,\zeta,\nu)$ and $\zeta\mapsto \overline{g}(\om,\zeta,\nu)$ are continuous on $\R^m_0$,  and their modulus of continuity does not depend on $\om$ and~$\nu$. This implies that 
\begin{equation} \label{equality on R}
\underline{g}(\om,\zeta,\nu)=\overline{g}(\om,\zeta,\nu)
\hbox{ for every }\om\in\widetilde\Omega,\ \zeta\in \R^m_0,\hbox{ and }\nu\in \mathbb{Q}^n\cap \Sph^{n-1},
\end{equation}
and that the function $\omega\mapsto \overline{g}(\om,\zeta,\nu)$ is $\T$-measurable on $\widetilde\Omega$ for every $\zeta\in \R^m_0$ and $\nu\in \mathbb{Q}^n\cap \Sph^{n-1}$.

Let $\Sph^{n-1}_\pm$ and $\widehat\Sph^{n-1}_\pm$ be the sets defined in (b), Section 2. It is known that $\mathbb{Q}^n\cap \Sph^{n-1}$ is dense in $\Sph^{n-1}$(see, e.g., \cite[Remark A.2]{CDMSZ}). Since $\Sph^{n-1}_\pm$ is open in the relative topology of $\Sph^{n-1}$ and is dense in  $\widehat\Sph^{n-1}_\pm$, we conclude that $\mathbb{Q}^n\cap \Sph^{n-1}_\pm$ is dense in $\widehat\Sph^{n-1}_\pm$.

Since, for fixed $\om\in \widetilde\Om$, the function $\underline g$ in \eqref{c:liminf} coincides with $\undertilde g$ in \eqref{C:g-utilde} (for $G=G(\om)$) evaluated at $x=0$, while $\overline g$ in \eqref{c:limsup} coincides with $\widetilde g$ in \eqref{C:g-tilde} (for $G=G(\om)$) evaluated at $x=0$, by Lemma~\ref{continuity_in_zeta-nu}, for every $\om\in \widetilde\Om$ and $\zeta\in \R^m_0$ the restrictions of the functions $\nu\mapsto \underline{g}(\om,\zeta,\nu)$ and $\nu\mapsto \overline{g}(\om,\zeta,\nu)$  to the sets  $\widehat\Sph^{n-1}_+$ and $\widehat\Sph^{n-1}_-$ are continuous. Therefore \eqref{equality on R} and the density of  $\mathbb{Q}^n\cap \Sph^{n-1}_\pm$ in $\widehat\Sph^{n-1}_\pm$ imply that
\begin{equation} \label{equality on sphere}
\underline{g}(\om,\zeta,\nu)=\overline{g}(\om,\zeta,\nu)
\hbox{ for every }\om\in\widetilde\Omega,\ \zeta\in \R_0^m,\hbox{ and }\nu\in \Sph^{n-1},
\end{equation}
and that the function  $\omega\mapsto \overline{g}(\om,\zeta,\nu)$ is $\T$-measurable on $\widetilde\Omega$ for every $\zeta\in \R^m_0$ and $\nu\in \Sph^{n-1}$.

For every $\om\in\Om$, $\zeta\in \R^m_0$, and $\nu\in  \Sph^{n-1}$ we define
\begin{equation} \label{defghom}
g_{\mathrm{hom}}(\omega,\zeta,\nu)=
\begin{cases}
\overline{g}(\om,\zeta,\nu)&\hbox{if }\om\in\widetilde\Omega,
\\
c_4&\hbox{if }\om\in\Om\setminus\widetilde\Omega.
\end{cases}
\end{equation}
By \eqref{equality on sphere} we may deduce \eqref{lim:density} for every $\om\in\widetilde\Omega$, $\zeta\in \R^m_0$, and $\nu\in \Sph^{n-1}$. Moreover, we have proved that 
\begin{align*}
\om&\mapsto \bar g(\om,\zeta,\nu) \,\, \textrm{is } \T\textrm{-measurable in } \widetilde \Om \,\, \textrm{ for every }\, \zeta\in \R^m_0 \,\textrm{ and } \nu \in \Sph^{n-1}, \\
(\zeta,\nu)  &\mapsto \bar g(\om,\zeta,\nu) \,\, \textrm{is continuous in } \R^m_0\times  \widehat\Sph^{n-1}_\pm  \quad \textrm{ for every }\,  \om \in \widetilde \Om.
\end{align*}
Therefore the $\T$-measurability of the function $\omega\mapsto \overline{g}(\om,\zeta,\nu)$ in $\widetilde\Om$ for every $\zeta\in \R^m_0$ and $\nu\in \Sph^{n-1}$ implies that  the restriction of $\overline{g}$ to $\widetilde\Omega\times\R^m_0\times \widehat\Sph^{n-1}_\pm$ is  
measurable with respect to the $\sigma$-algebra induced in $\widetilde\Omega\times\R^m_0\times \widehat\Sph^{n-1}_\pm$ by $\T \otimes \B^m\otimes \B^{n}_S$. 
This implies the $(\T\otimes \B^m\otimes \B^{n}_S)$-measurability of $g_{\mathrm{hom}}$ on $\Omega\times\R^m_0\times \Sph^{n-1}$, thus showing that 
$g_{\mathrm{hom}}$ satisfies property (c) of Definition \ref{ri}.

Note now that for every $\om\in\Om$ the function 
$(x,\zeta,\nu)\mapsto g_{\mathrm{hom}}(\om,\zeta,\nu)$ defined in \eqref{defghom} belongs to the class $\G$. Indeed, for $\om\in\widetilde\Omega$ this follows from Lemma \ref{continuity-g-bar} while for 
$\om\in\Om\setminus\widetilde\Omega$ this follows from the definition of $g_{\mathrm{hom}}$.
Thus, $g_{\mathrm{hom}}$ satisfies property (d) of Definition \ref{ri}, and this concludes the proof.
\end{proof}

\section{Proof  of the formula for the surface integrand: the  general case}\label{not special}

In this section we extend Theorem \ref{Ergodic-thm-surf} to the case of arbitrary $x\in\R^n$, 
thus concluding the proof of \eqref{phi0}. More precisely, we prove the following result.
\begin{thm}\label{Ergodic-thm-surf2}
Let $g$ be a stationary random surface integrand
with respect to a group $(\tau_z)_{z\in \Z^n}$ of $P$-preserving transformations on $(\Om,\T,P)$. 
Then there exist $\Om'\in\T$, with $P(\Om')=1$, and a random surface  integrand   $g_{\mathrm{hom}}\colon \Om\times \R_0^m \times \Sph^{n-1} \to \R$, independent of $x$, such that
\begin{equation}\label{Key:g}
g_{\mathrm{hom}}(\omega,\zeta,\nu)=
\lim_{t\to +\infty} \frac{m^{\mathrm{pc}}_{G(\om)}(u_{tx,\zeta,\nu},Q^\nu_{ r(t)}(tx))}{ r(t)^{n-1}},
\end{equation} 
for every $\om\in \Om'$, $x\in\R^n$, $\zeta\in \R^m_0$, $\nu\in \Sph^{n-1}$, and for every function $r\colon(0,+\infty)\to(0,+\infty)$ with $r(t)\ge t$ for $t>0$.
Moreover, if $(\tau_z)_{z\in \Z^n}$ is ergodic, then $g_{\mathrm{hom}}$ does not depend on $\om$ and  
\begin{equation}\label{ergodic6}
g_{\mathrm{hom}}(\zeta,\nu)= \lim_{t\to +\infty} \frac{1}{ r(t)^{n-1}} 
\int_{\Omega}m^{\mathrm{pc}}_{G(\om)}(u_{0,\zeta,\nu},Q^\nu_{ r(t)}(0))\,dP(\omega).
\end{equation} 
\end{thm}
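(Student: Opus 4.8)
The plan is to deduce Theorem~\ref{Ergodic-thm-surf2} from Theorem~\ref{Ergodic-thm-surf} (the case $x=0$) in three stages, following the strategy outlined in the introduction: first reduce to integer $x$ via stationarity and ergodic-type arguments, then pass from integer $x$ to arbitrary $x\in\R^n$ by an approximation argument, and finally handle the independence of the auxiliary function $r(\cdot)$.

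\textbf{Step 1: $\tau$-invariance of $g_{\mathrm{hom}}$.} Starting from the set $\widetilde\Om$ and the function $g_{\mathrm{hom}}$ produced by Theorem~\ref{Ergodic-thm-surf}, I would first show that $\om\mapsto g_{\mathrm{hom}}(\om,\zeta,\nu)$ is $\tau$-invariant, i.e.\ $g_{\mathrm{hom}}(\tau_z(\om),\zeta,\nu)=g_{\mathrm{hom}}(\om,\zeta,\nu)$ for all $z\in\Z^n$. This is obtained by combining the stationarity \eqref{covariance} of $g$ (which gives $G(\tau_z(\om))(v,\mathrm{int}\,A)=G(\om)(u,\mathrm{int}\,A+z)$ after the change of variables $v(\cdot)=u(\cdot+z)$) with the identity $u_{0,\zeta,\nu}(x+z)=u_{z,\zeta,\nu}(x)$, so that $m^{\mathrm{pc}}_{G(\tau_z(\om))}(u_{0,\zeta,\nu},Q^\nu_t(0)) = m^{\mathrm{pc}}_{G(\om)}(u_{z,\zeta,\nu},Q^\nu_t(z))$; since the cube $Q^\nu_t(z)$ differs from $Q^\nu_t(0)$ only by a bounded translation while $t\to+\infty$, and the boundary datum $u_{z,\zeta,\nu}$ agrees with $u_{0,\zeta,\nu}$ up to a bounded shift of the jump hyperplane, a comparison-of-cubes argument (as in the proof of Lemma~\ref{continuity_in_zeta-nu}) shows the limit is unchanged, yielding $\tau$-invariance.

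\textbf{Step 2: independence of integer $x$.} For $x=z\in\Z^n$, I would show that the limit in \eqref{Key:g} with $r(t)=t$ coincides with $g_{\mathrm{hom}}(\om,\zeta,\nu)$ on a full-measure set. The key point is that $m^{\mathrm{pc}}_{G(\om)}(u_{tz,\zeta,\nu},Q^\nu_t(tz)) = m^{\mathrm{pc}}_{G(\tau_{\lfloor tz\rfloor}(\om))}(u_{\{tz\},\zeta,\nu},Q^\nu_t(\{tz\}))$ (using stationarity to shift by the integer part $\lfloor tz\rfloor\in\Z^n$, with $\{tz\}$ the bounded remainder); combining the Subadditive Ergodic Theorem~\ref{ergodic} applied along the subsequence $t\in\N$ with the Birkhoff Ergodic Theorem applied to the $P$-preserving transformation associated with $z$, in the spirit of \cite[Proof of Theorem 5.5]{Alciru} and \cite[Proposition 2.10]{BP}, gives that the Cesàro-type averages converge to the $\tau$-invariant limit $g_{\mathrm{hom}}(\om,\zeta,\nu)$. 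Here one works first with $\zeta\in\Q^m_0$ and $\nu\in\Q^n\cap\Sph^{n-1}$ and then extends to all $\zeta,\nu$ by the equicontinuity from Lemmas~\ref{continuity-g-bar}--\ref{continuity_in_zeta-nu}, intersecting over the countable set of rational data to keep a single full-measure set $\Om'$.

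\textbf{Step 3: arbitrary $x\in\R^n$ and arbitrary $r(\cdot)$.} For general $x$, I would fix $\delta>0$, pick $z\in\Z^n$ with $|x-z|$ small relative to $\delta$, and sandwich $Q^\nu_{r(t)}(tx)$ between $Q^\nu_{(1-\delta)r(t)}(tz)$ and $Q^\nu_{(1+\delta)r(t)}(tz)$ for $t$ large; the same construction as in Lemma~\ref{continuity_in_zeta-nu}—extending a near-optimal competitor on the smaller cube by $u_{\cdot,\zeta,\nu}$ on the annulus, at a surface cost controlled by $(g6)$ and proportional to $\varsigma(\delta)r(t)^{n-1}$—together with Step~2 applied with $r(t)$ in place of $t$ (which is legitimate since Step~2 only used $t\to+\infty$), gives $(1-\delta)^{n-1}$- and $(1+\delta)^{n-1}$-type two-sided bounds; letting $t\to+\infty$ and then $\delta\to0^+$ yields \eqref{Key:g}, simultaneously showing the limit does not depend on the choice of $r$ with $r(t)\ge t$. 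The ergodic case \eqref{ergodic6} then follows because $g_{\mathrm{hom}}$ is both $\tau$-invariant and $\T$-measurable, hence $P$-a.e.\ constant, and the convergence of the spatial averages $\frac{1}{r(t)^{n-1}}\int_\Om m^{\mathrm{pc}}_{G(\om)}(u_{0,\zeta,\nu},Q^\nu_{r(t)}(0))\,dP$ is obtained from the pointwise convergence together with the uniform bound \eqref{stimatildemu} via dominated convergence.

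\textbf{Main obstacle.} I expect the hardest part to be Step~2: correctly coupling the Subadditive Ergodic Theorem (which, as stated in Theorem~\ref{ergodic}, controls $\mu(\om,tQ)/\mathcal L^k(tQ)$ along $t\to+\infty$ centred essentially at the origin) with the Birkhoff Ergodic Theorem to absorb the integer translation $tz$, while keeping track of the dimensional mismatch (the surface process $\mu_{\zeta,\nu}$ lives on $(n-1)$-dimensional intervals via $T_\nu$, so the translations must be the perpendicular lattice vectors $z'_\nu=M_\nu R_\nu(z',0)$ rather than arbitrary $z\in\Z^n$) and ensuring the exceptional null sets can be chosen uniformly in the countably many rational data. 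The approximation in Step~3 is technically routine once Lemma~\ref{continuity_in_zeta-nu} and the comparison-of-cubes estimate are in hand.
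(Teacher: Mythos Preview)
Your overall architecture—$\tau$-invariance, then integer base points via Birkhoff combined with the $x=0$ result, then approximation to arbitrary $x$, then continuity in $(\zeta,\nu)$—matches the paper's proof. Step~1 is essentially the paper's Theorem~\ref{invariance3}, and your description of Step~2 (shift by stationarity, then use Birkhoff to locate ``good'' translates along the orbit $(\tau_{iz}(\om))_i$) is the right mechanism, matching the paper's Steps~1--3 and the references you cite. The ergodic conclusion via dominated convergence is also what the paper does.

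There is, however, a concrete gap in your Step~3. You write ``pick $z\in\Z^n$ with $|x-z|$ small relative to $\delta$'' and then sandwich $Q^\nu_{r(t)}(tx)$ between cubes centred at $tz$. But for arbitrary $x\in\R^n$ you \emph{cannot} choose $z\in\Z^n$ with $|x-z|$ arbitrarily small; the distance from $x$ to $\Z^n$ may be of order $1$, and then $|tx-tz|\sim t$ is comparable to $r(t)$, so the sandwich fails. The paper fixes this by approximating $x$ by a \emph{rational} $q\in\Q^n$ with $|x-q|<\eta$, choosing $h\in\N$ so that $z:=hq\in\Z^n$, and comparing $Q^\nu_{r(t_k)}(t_kx)$ with cubes centred at $\lfloor t_k/h\rfloor\, z$. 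This is why the integer result (your Step~2, the paper's Step~3) must be proved in the form
\[
\lim_{k\to+\infty,\ k\in\N}\ \frac{m^{\mathrm{pc}}_{G(\om)}\bigl(u_{-kz,\zeta,\nu},Q^\nu_{m_k}(-kz)\bigr)}{m_k^{n-1}}=g_{\mathrm{hom}}(\om,\zeta,\nu)
\]
for \emph{every} integer sequence $m_k\ge k$ and every fixed $z\in\Z^n$, not merely with $m_k=k$: the flexibility in $m_k$ is what absorbs both the rational denominator $h$ and the auxiliary function $r(\cdot)$ in one stroke. Your parenthetical ``Step~2 applied with $r(t)$ in place of $t$'' is the right instinct, but it has to be built into the integer step from the start.

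One further remark: your ``Main obstacle'' worries about restricting to perpendicular lattice vectors $z'_\nu=M_\nu R_\nu(z',0)$ because the subadditive process $\mu_{\zeta,\nu}$ is $(n{-}1)$-dimensional. That concern is misplaced here. Once Theorem~\ref{Ergodic-thm-surf} is in hand, the proof of Theorem~\ref{Ergodic-thm-surf2} never returns to the $(n{-}1)$-dimensional process; Birkhoff is applied to the single $P$-preserving map $\tau_z$ for an \emph{arbitrary} $z\in\Z^n$ (acting on the events $E^{\zeta,\nu,\eta}_j$), and the comparison of cubes is done directly in $\R^n$ via the extension-by-$u_{\cdot,\zeta,\nu}$ construction you describe.
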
  
The first step in the proof of the above statement is the following invariance result.
In the ergodic case this implies that the function $g_{\mathrm{hom}}$
does not depend on $\om$ (see Corollary \ref{ergodic5}).
\begin{thm}\label{invariance3}
Let $g$ be a stationary random surface integrand
with respect to a group $(\tau_z)_{z\in \Z^n}$ of $P$-preserving transformations on $(\Om,\T,P)$, and
let $\widetilde\Om$ and $g_{\mathrm{hom}}$ be as in Theorem \ref{Ergodic-thm-surf}. Then there exists a set $\widehat{\Omega}\in\T$, with $P(\widehat{\Omega})=1$, $\widehat \Om \subset \widetilde\Om$,  and $\tau_z(\widehat{\Omega})=\widehat{\Omega}$ for every $z\in\Z^n$, such that
\begin{equation}\label{c:invariance}
g_{\mathrm{hom}}(\tau_z(\om),\zeta,\nu)=g_{\mathrm{hom}}(\om,\zeta,\nu)
\end{equation}
for every $z\in \Z^n$, $\omega\in \widehat{\Omega}$, $\zeta\in \R^m_0$, and $\nu\in \Sph^{n-1}$.
\end{thm}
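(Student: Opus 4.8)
The plan is to derive \eqref{c:invariance} from the stationarity of $g$ by a change of variables, together with the fact that translating the reference cube by an integer vector does not affect the cell limit \eqref{lim:density}. I would first fix the invariant set: since $\widetilde\Om\in\T$ has $P(\widetilde\Om)=1$ and each $\tau_z$ is a $P$-preserving bijection whose inverse $\tau_{-z}$ is $\T$-measurable, the set $\widehat\Omega:=\bigcap_{z\in\Z^n}\tau_z(\widetilde\Om)$ belongs to $\T$, has probability $1$, is contained in $\widetilde\Om$ (the case $z=0$), and, by the group property, satisfies $\tau_{z'}(\widehat\Omega)=\widehat\Omega$ for every $z'\in\Z^n$. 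In particular $\tau_z(\om)\in\widehat\Omega\subset\widetilde\Om$ whenever $\om\in\widehat\Omega$ and $z\in\Z^n$, so by Theorem~\ref{Ergodic-thm-surf} both $g_{\mathrm{hom}}(\om,\zeta,\nu)$ and $g_{\mathrm{hom}}(\tau_z(\om),\zeta,\nu)$ are then given by the limit in \eqref{lim:density}.

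The key elementary identity is the following covariance property of the cell problem. Given $u\in L^0(\R^n,\R^m)$ and $z\in\Z^n$, put $v:=u(\,\cdot-z\,)$; then $S_v=S_u+z$, $[v]=[u](\,\cdot-z\,)$, $\nu_v=\nu_u(\,\cdot-z\,)$, and performing the change of variables $y\mapsto y-z$ in the surface integral and invoking $g(\tau_z(\om),y,\cdot,\cdot)=g(\om,y+z,\cdot,\cdot)$ gives $G(\tau_z(\om))(u,A)=G(\om)(v,A+z)$ for every $A\in\A$. Translating the boundary datum in the same way, and using $u_{0,\zeta,\nu}(\,\cdot-z\,)=u_{z,\zeta,\nu}$ and $Q^\nu_t(0)+z=Q^\nu_t(z)$, one obtains
\[
m^{\mathrm{pc}}_{G(\tau_z(\om))}(u_{0,\zeta,\nu},Q^\nu_t(0))=m^{\mathrm{pc}}_{G(\om)}(u_{z,\zeta,\nu},Q^\nu_t(z))
\]
for all $\om\in\Om$, $z\in\Z^n$, $t>0$, $\zeta\in\R^m_0$, $\nu\in\Sph^{n-1}$. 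Dividing by $t^{n-1}$ and letting $t\to+\infty$ for $\om\in\widehat\Omega$, it follows that \eqref{c:invariance} is equivalent to the statement
\[
\lim_{t\to+\infty}\frac{m^{\mathrm{pc}}_{G(\om)}(u_{z,\zeta,\nu},Q^\nu_t(z))}{t^{n-1}}=g_{\mathrm{hom}}(\om,\zeta,\nu)\qquad\text{for every }\om\in\widetilde\Om.
\]

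The remaining step — that the limit is insensitive to this integer translation of the cube — is the only place where genuine work is needed, and it is a gluing argument of the type used in the $x$-independence part of the paper, only simpler because the shift is integer. For $\eta>0$, take an $\eta$-minimiser $u\in SBV_{\mathrm{pc}}(Q^\nu_t(z),\R^m)$ for $m^{\mathrm{pc}}_{G(\om)}(u_{z,\zeta,\nu},Q^\nu_t(z))$ and extend it to the concentric cube $Q^\nu_{t'}(0)$, $t':=t+3|z|$ (which compactly contains $Q^\nu_t(z)$), by letting it equal $u_{0,\zeta,\nu}$ on the annulus $Q^\nu_{t'}(0)\setminus\overline{Q^\nu_t(z)}$. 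This produces an admissible competitor for $m^{\mathrm{pc}}_{G(\om)}(u_{0,\zeta,\nu},Q^\nu_{t'}(0))$ whose jump set exceeds $S_u$ only by (i) the portion of $\partial Q^\nu_t(z)$ on which the traces $u_{z,\zeta,\nu}$ and $u_{0,\zeta,\nu}$ differ — which, since the two jump hyperplanes $\Pi^\nu_z$ and $\Pi^\nu_0$ are parallel and offset by $z\cdot\nu$, lies in the lateral faces inside a slab of width $|z\cdot\nu|$, hence has $\mathcal{H}^{n-1}$-measure at most $C_n|z|\,t^{n-2}$ — and (ii) $\Pi^\nu_0$ intersected with the thin annulus, of measure $(t')^{n-1}-t^{n-1}\le C_n|z|\,t^{n-2}$. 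By $(g6)$ the extra surface energy is at most $C(1+|\zeta|)|z|\,t^{n-2}$, so $m^{\mathrm{pc}}_{G(\om)}(u_{0,\zeta,\nu},Q^\nu_{t'}(0))\le m^{\mathrm{pc}}_{G(\om)}(u_{z,\zeta,\nu},Q^\nu_t(z))+\eta+C(1+|\zeta|)|z|\,t^{n-2}$; dividing by $t^{n-1}$, using $(t')^{n-1}/t^{n-1}\to1$ and sending $\eta\to0$ yields $g_{\mathrm{hom}}(\om,\zeta,\nu)\le\liminf_{t\to+\infty}t^{-(n-1)}m^{\mathrm{pc}}_{G(\om)}(u_{z,\zeta,\nu},Q^\nu_t(z))$. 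The symmetric construction — gluing an $\eta$-minimiser on $Q^\nu_{t-3|z|}(0)$ with $u_{z,\zeta,\nu}$ on the surrounding annulus to get a competitor on $Q^\nu_t(z)$ — produces the matching $\limsup$ inequality. Hence the limit exists and equals $g_{\mathrm{hom}}(\om,\zeta,\nu)$, which together with the covariance identity proves \eqref{c:invariance} on $\widehat\Omega$.

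I expect the surface-measure bookkeeping in this last step to be the main obstacle: one must verify that accommodating the offset $z\cdot\nu$ between the parallel hyperplanes $\Pi^\nu_0$ and $\Pi^\nu_z$ costs only $o(t^{n-1})$ in surface energy — this is precisely why the comparison cubes have side $t\pm 3|z|$ and why the error bounds must be uniform with respect to the chosen $\eta$-minimiser. The other ingredients — the change of variables, the $\T$-measurability and $\tau$-invariance of $\widehat\Omega$, and the reduction to the cube-translation statement — are routine given Theorem~\ref{Ergodic-thm-surf}.
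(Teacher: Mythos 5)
Your proposal is correct and follows essentially the same route as the paper: the same invariant set $\widehat\Omega=\bigcap_{z\in\Z^n}\tau_z(\widetilde\Omega)$, the same stationarity/change-of-variables identity reducing \eqref{c:invariance} to the insensitivity of the cell limit under the integer translation of the cube, and the same gluing of an almost-minimiser into the concentric cube of side $t+3|z|$ with an $O(|z|\,t^{n-2})$ surface-energy error controlled via $(g6)$. The only cosmetic difference is that the paper proves a single inequality and deduces the reverse one by replacing $(\om,z)$ with $(\tau_z(\om),-z)$, whereas you run the gluing construction in both directions.
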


\begin{proof} 
Let us define 
$$
\widehat{\Omega}:= \bigcap_{z\in \mathbb{Z}^n} \tau_z (\widetilde{\Omega}).
$$
Then $\tau_z(\widehat{\Omega})=\widehat{\Omega}\subset \widetilde{\Omega}$ for every $z\in\Z^n$, and $P(\widehat{\Omega})=1$ by property (c) of Definition \ref{P-preserving}. 
To prove \eqref{c:invariance} it is enough to show that 
\begin{equation}\label{c:invariance-enough}
g_{\mathrm{hom}}(\tau_z(\om),\zeta,\nu)\leq g_{\mathrm{hom}}(\om,\zeta,\nu)
\end{equation}
for every $z\in \Z^n$, $\omega\in \widehat{\Omega}$, $\zeta\in \R^m_0$, and $\nu\in \Sph^{n-1}$.
Indeed, the opposite inequality is obtained by applying  \eqref{c:invariance-enough} with $\om$ replaced by $\tau_z(\om)$ and $z$ replaced by $-z$.

Let  $z\in \mathbb{Z}^n$, $\om\in\widehat{\Omega}$,  $\zeta\in \R^m_0$,   and  $\nu\in \Sph^{n-1}$ be fixed. 
 For every $t> 3|z|$, let $u_t \in SBV_{\mathrm{pc}}(Q^\nu_{t}(0),\R^m)$ be such that $u_t =u_{0,\zeta,\nu}$ in a neighbourhood of $\partial Q^\nu_t(0)$, and
\begin{equation}\label{quasi min t}
G(\om)(u_t ,Q^\nu_{t}(0)) \leq m^{\mathrm{pc}}_{G(\om)}(u_{0,\zeta,\nu},Q^\nu_t(0))+ 1.
\end{equation}
 Since $g$ is stationary,  using  \eqref{covariance}  and  a change of variables  we obtain

\begin{equation}\label{trasl:1}
m^{\mathrm{pc}}_{G(\tau_z(\omega))}(u_{0,\zeta,\nu},Q_{t}^\nu(0))=m^{\mathrm{pc}}_{G(\omega)}(u_{z,\zeta,\nu},Q_{t}^\nu(z)).
\end{equation}

We now modify $u_t $ to obtain a competitor  for a   minimisation problem  related to  the right-hand side of \eqref{trasl:1}. 
Noting that $Q^\nu_t(0)  \subset\subset  Q^\nu_{t+3 |z|}(z)$ we define
$$
v_t (y):= 
\begin{cases}
u_t (y) &\textrm{if }\; y\in Q^\nu_t(0),
\cr
u_{z,\zeta,\nu}(y) &\textrm{if }\; y\in Q^{\nu}_{t+3 |z|}(z)\setminus Q^\nu_{t}(0).
\end{cases} 
$$
Clearly $v_t \in SBV_{\mathrm{pc}}(Q_{t+3 |z|}^\nu(z),\R^m)$ and $v_t =u_{z,\zeta,\nu}$ in a neighbourhood of $\partial Q^\nu_{t+3 |z|}(z)$.
It is easy to see that $S_{v_t }= S_{u_t }\cup \Sigma_1\cup \Sigma_2$, where 
$$
\Sigma_1:=\big\{y\in \partial Q^\nu_{t}(0): \big(y{\,\cdot\,}\nu\big)\big((y-z){\,\cdot\,}\nu\big)<0\big\}\quad \text{and}  \quad \Sigma_2:=\Pi^\nu_{z}\cap(Q^{\nu}_{t+3 |z|}(z)\setminus Q^\nu_{t}(0)).
$$%
Moreover $|[v_t ]|=|\zeta|$ $\hs^{n-1}$-a.e.\ on $\Sigma_1\cup \Sigma_2$. Since $3 |z|<t$, we
have $\hs^{n-1}(\Sigma_1)=2 (n-1) |z\cdot \nu| \,t^{n-2}$ and $\hs^{n-1}(\Sigma_2)=(t+3 |z|)^{n-1}-t^{n-1}\le
 3(n-1)|z|(t+3 |z|)^{n-2}< 2^{ n }(n-1) |z|\,t^{n-2}$.
Therefore $(g6)$ gives
$$
G(\om)(v_t ,Q^{\nu}_{t+3 |z|}(z)) \le G(\om)(u_t ,Q^{\nu}_{t}(0))+ M_{\zeta,z}\,t^{n-2},
$$%
where $M_{\zeta,z}:=c_5 (n-1)  (2+2^{n})  |z|(1+|\zeta|)$.
This inequality, combined with
\eqref{quasi min t} and with the definition of  $m^{\mathrm{pc}}_{G(\om)}$, gives
\begin{equation}\label{c:nun}
m^{\mathrm{pc}}_{G(\omega)}(u_{z,\zeta,\nu},Q_{t+3 |z|}^\nu(z))\leq m^{\mathrm{pc}}_{G(\om)}(u_{0,\zeta,\nu},Q^\nu_t(0))+  1  +  M_{\zeta,z}\,t^{n-2}.
\end{equation}
 Recalling that  $\tau_z(\omega)\in\widehat{\Omega}\subset \widetilde\Omega$, by \eqref{lim:density} and \eqref{trasl:1} we get
\begin{align*}
g_{\mathrm{hom}}(\tau_z(\omega),\zeta,\nu) &= \lim_{t\to +\infty} \frac{m^{\mathrm{pc}}_{G(\tau_z(\omega))}(u_{0,\zeta,\nu},Q_{t}^\nu(0))}{t^{n-1}}
= \lim_{t\to +\infty} \frac{m^{\mathrm{pc}}_{G(\omega)}(u_{z,\zeta,\nu},Q_{t}^\nu(z))}{t^{n-1}}
\\
&= \lim_{t\to +\infty} \frac{m^{\mathrm{pc}}_{G(\omega)}(u_{z,\zeta,\nu},Q^\nu_{t+3 |z|}(z))}{t^{n-1}},
\end{align*}
where in the last equality we have used the fact that $t^{n-1}/(t+3 |z|)^{n-1}  \to  1$ as $t\to+\infty$. 
 Therefore, dividing all terms of    \eqref{c:nun}   by $t^n$  and taking the limit as $t\to+\infty$ we obtain from  \eqref{lim:density}  the inequality 
$$
g_{\mathrm{hom}}(\tau_z(\omega),\zeta,\nu)\leq g_{\mathrm{hom}}(\omega,\zeta,\nu),
$$
 which proves  \eqref{c:invariance-enough}.
\end{proof}
 The  next result shows that, in the ergodic case, 
the function $g_{\mathrm{hom}}$ is independent of $\omega$.

\begin{cor}\label{ergodic5} In addition to the assumptions of Theorem \ref{invariance3}, suppose that the group 
$(\tau_z)_{z\in \Z^n}$ of $P$-preserving transformations on $(\Om,\T,P)$ is ergodic. Then there exist a set $\widehat{\Om}_0\in\T$ with $\widehat{\Om}_0\subset \widehat{\Om}$ and $P(\widehat{\Om}_0)=1$, and a surface integrand $\hat g_{\mathrm{hom}}\in\G$, independent of $x$, such that   $g_{\mathrm{hom}}(\om,\zeta,\nu)=\hat g_{\mathrm{hom}}(\zeta,\nu)$ for every $\om\in \widehat{\Om}_0$, $\zeta\in\R^m_0$, and $\nu\in  \Sph^{n-1}$.
\end{cor}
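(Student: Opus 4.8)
The strategy is the classical ``ergodic $\Rightarrow$ a.e.\ constant'' argument applied fibrewise in $(\zeta,\nu)$; the only genuine point requiring care is the production of a \emph{single} exceptional null set valid for all $(\zeta,\nu)$ at once, which I would handle by exploiting the $\om$-uniform continuity of $g_{\mathrm{hom}}$ in $(\zeta,\nu)$ established in Lemmas~\ref{continuity-g-bar} and~\ref{continuity_in_zeta-nu}.

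First, fix $\zeta\in\R^m_0$ and $\nu\in\Sph^{n-1}$. Since $g_{\mathrm{hom}}$ is a random surface integrand (Theorem~\ref{Ergodic-thm-surf}), the section $\om\mapsto g_{\mathrm{hom}}(\om,\zeta,\nu)$ is $\T$-measurable, and by Theorem~\ref{invariance3} it satisfies $g_{\mathrm{hom}}(\tau_z(\om),\zeta,\nu)=g_{\mathrm{hom}}(\om,\zeta,\nu)$ for every $z\in\Z^n$ and every $\om$ in the $\tau$-invariant full-measure set $\widehat\Om$. Extending this section by a fixed constant on $\Om\setminus\widehat\Om$ produces a genuinely $\tau$-invariant $\T$-measurable function, which, by ergodicity of $(\tau_z)_{z\in\Z^n}$, equals $P$-a.e.\ a constant, to be denoted $\hat g_{\mathrm{hom}}(\zeta,\nu)$. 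Hence for each such $(\zeta,\nu)$ there is $\Om_{\zeta,\nu}\in\T$ with $\Om_{\zeta,\nu}\subset\widehat\Om$ and $P(\Om_{\zeta,\nu})=1$ on which $g_{\mathrm{hom}}(\cdot,\zeta,\nu)\equiv\hat g_{\mathrm{hom}}(\zeta,\nu)$. Next I would fix a countable dense set $D\subset\R^m_0$, recall from the proof of Theorem~\ref{Ergodic-thm-surf} that $\Q^n\cap\Sph^{n-1}_\pm$ is dense in $\widehat\Sph^{n-1}_\pm$, and set
$$
\widehat\Om_0:=\bigcap_{\zeta\in D}\ \bigcap_{\nu\in\Q^n\cap\Sph^{n-1}}\Om_{\zeta,\nu}\,.
$$
As a countable intersection of sets of full measure, $\widehat\Om_0\in\T$, $\widehat\Om_0\subset\widehat\Om$, and $P(\widehat\Om_0)=1$, and $g_{\mathrm{hom}}(\om,\zeta,\nu)=\hat g_{\mathrm{hom}}(\zeta,\nu)$ for all $\om\in\widehat\Om_0$ and all $(\zeta,\nu)$ in this countable set.

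Finally I would extend the identity to all $(\zeta,\nu)$ by continuity. For every $\om\in\widehat\Om_0\subset\widetilde\Om$ the function $g_{\mathrm{hom}}(\om,\cdot,\cdot)$ coincides with $\overline g(\om,\cdot,\cdot)\in\G$ (cf.\ the proof of Theorem~\ref{Ergodic-thm-surf}); in particular, by Lemmas~\ref{continuity-g-bar} and~\ref{continuity_in_zeta-nu}, $\zeta\mapsto g_{\mathrm{hom}}(\om,\zeta,\nu)$ is continuous on $\R^m_0$ and the restrictions of $\nu\mapsto g_{\mathrm{hom}}(\om,\zeta,\nu)$ to $\widehat\Sph^{n-1}_+$ and to $\widehat\Sph^{n-1}_-$ are continuous, with moduli of continuity independent of $\om$. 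Consequently $\hat g_{\mathrm{hom}}$, so far defined only on $D\times(\Q^n\cap\Sph^{n-1})$, extends uniquely to a continuous function on $\R^m_0\times\widehat\Sph^{n-1}_\pm$, and the identity $g_{\mathrm{hom}}(\om,\zeta,\nu)=\hat g_{\mathrm{hom}}(\zeta,\nu)$ passes to the limit, holding for every $\om\in\widehat\Om_0$, $\zeta\in\R^m_0$, and $\nu\in\Sph^{n-1}$. Since $\hat g_{\mathrm{hom}}$ inherits $(g1)$--$(g7)$ from $g_{\mathrm{hom}}(\om,\cdot,\cdot)\in\G$ for any $\om\in\widehat\Om_0$, it belongs to $\G$, and it is manifestly independent of $x$. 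The one obstacle worth flagging is precisely this gluing of the sets $\Om_{\zeta,\nu}$: a naive fibrewise application of ergodicity would only give constancy outside a $(\zeta,\nu)$-dependent null set, and it is the $\om$-uniform modulus of continuity from Lemmas~\ref{continuity-g-bar} and~\ref{continuity_in_zeta-nu} that lets us collapse this to a single $\widehat\Om_0$.
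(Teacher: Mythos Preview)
Your proof is correct and follows essentially the same approach as the paper's: establish a.e.\ constancy for each fixed $(\zeta,\nu)$, intersect over a countable dense family, then extend to all $(\zeta,\nu)$ via the continuity properties from Lemmas~\ref{continuity-g-bar} and~\ref{continuity_in_zeta-nu}. The only cosmetic difference is that where you invoke directly the standard fact that a measurable $\tau$-invariant function under an ergodic action is $P$-a.e.\ constant, the paper spells this out via the level-set argument $E_c^{\zeta,\nu}:=\{g_{\mathrm{hom}}(\cdot,\zeta,\nu)\ge c\}$, and where you extend $\hat g_{\mathrm{hom}}$ by continuity from the countable grid, the paper simply fixes $\om_0\in\widehat\Om_0$ and sets $\hat g_{\mathrm{hom}}:=g_{\mathrm{hom}}(\om_0,\cdot,\cdot)$, which automatically yields the continuous extension.
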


\begin{proof} 
Let $\widehat \Omega$ be as in Theorem \ref{invariance3}. We start by showing that for every $\zeta\in\Q^m_0$ and $\nu\in \Q^n\cap \Sph^{n-1}$
there exist $\hat g_{\mathrm{hom}}(\zeta,\nu) \in \mathbb{R}$ and 
a set $\widehat{\Om}^{\zeta,\nu}\in\T$,  with $\widehat{\Om}^{\zeta,\nu}\subset \widehat{\Om}$ 
and $P(\widehat{\Om}^{\zeta,\nu})=1$,  such that
\begin{equation*} 
g_{\mathrm{hom}}(\om,\zeta,\nu)=\hat g_{\mathrm{hom}}(\zeta,\nu)
\quad \text{ for every }  \om\in \widehat{\Om}^{\zeta,\nu}.
\end{equation*}
To this end we fix  $\zeta\in\Q^m_0$ and $\nu\in \Q^n\cap \Sph^{n-1}$ and 
for every  $c\in \R$ we define
$$
E^{\zeta,\nu}_c:=\{ \omega \in \widehat{\Om} :  g_{\mathrm{hom}}(\om,\zeta,\nu) \geq c\}.
$$%
From \eqref{c:invariance} it follows that $\tau_z( E_c^{\zeta,\nu}) = E_c^{\zeta,\nu}$ 
for every $z\in \Z^n$. 
Since $(\tau_z)_{z\in \Z^n}$ is ergodic, we can only have 
\begin{equation}\label{zero-one}
P(E_c^{\zeta,\nu})=0\quad\text{or}\quad P(E_c^{\zeta,\nu})=1.
\end{equation}
 Since $E_{c_1}^{\zeta,\nu}\supset E_{c_2}^{\zeta,\nu}$ for $c_1<c_2$, by \eqref{zero-one}
there exists  $c_0(\zeta, \nu) \in \mathbb{R}$  such that $P(E_c^{\zeta,\nu})=0$  for $c>c_0(\zeta, \nu)$ and $P(E_c^{\zeta,\nu})=1$ for $c<c_0(\zeta, \nu)$. It follows that there exists $ \widehat{\Om}^{\zeta,\nu} \subset \widehat{\Om}$, 
with $P(\widehat{\Om}^{\zeta,\nu})=1$, such that
\begin{equation} \label{P-a.e.}
 g_{\mathrm{hom}}(\om,\zeta,\nu) =c_0 (\zeta, \nu) \quad \text{ for  every } \omega \in  \widehat{\Om}^{\zeta,\nu}.
\end{equation}
We  define  $\widehat{\Omega}_0$  as the intersection of all sets  $\widehat{\Omega}^{\zeta,\nu}$  for
 $\zeta\in \Q^m_0$  and 
$\nu\in \mathbb{Q}^n\cap \Sph^{n-1}$. Then   $\widehat{\Om}_0\subset \widehat{\Om}$ and  $P(\widehat{\Om}_0)=1$.
 We now fix $\om_0\in \widehat{\Om}_0$ and define 
$\hat g_{\mathrm{hom}}(\zeta,\nu):=g_{\mathrm{hom}}(\om_0,\zeta,\nu)$ for every $\zeta\in\R^m_0$ and every $\nu\in \Q^n\cap \Sph^{n-1}$. By \eqref{P-a.e.} we have 
$$
g_{\mathrm{hom}}(\om,\zeta,\nu)= \hat g_{\mathrm{hom}}(\zeta,\nu) \quad
 \text{ for every } \om \in  \widehat{\Omega}_0 ,\  \zeta\in\Q^m_0 ,\  \nu\in \Q^n\cap \Sph^{n-1}.
$$%
The conclusion now follows from the continuity of $(\zeta,\nu)\mapsto g_{\mathrm{hom}}(\om,\zeta,\nu)$ on $\R^m_0\times \widehat\Sph^{n-1}_\pm$ obtained in the proof of Theorem~\ref{Ergodic-thm-surf}.
\end{proof}
We now state some classical results from Probability Theory, which will be crucial for the proof of 
Theorem~\ref{Ergodic-thm-surf2}.
For every $\psi \in L^1(\Omega,\T,P)$ and for every $\sigma$-algebra $\T'\subset\T$, 
we will denote by $\mathbb{E}[\psi|\T']$ the conditional expectation of $\psi$
with respect to $\T'$. This is the unique random variable  in $L^1(\Omega,\T',P)$ 
with the property that
$$
\int_{E} \mathbb{E}[\psi| \T'](\om) \, d P (\om)
= \int_{E} \psi (\om) \, d P (\om) 
\quad \text{ for every } E \in \T'.
$$
We start by stating Birkhoff's Ergodic Theorem (for a proof, see,  e.g.,  \cite[Theorem 2.1.5]{K}). 
\begin{thm}[Birkhoff's Ergodic Theorem]\label{birk} 
Let $(\Omega,\T,P)$ be a probability space, let $T\colon\Omega\to \Omega$ be a $P$-preserving transformation, and let $\mathscr{I}_P(T)$ be the $\sigma$-algebra of $T$-invariants sets. Then for every $\psi \in L^1(\Omega,\T,P)$ we have 
$$
\lim_{k\to + \infty} \frac1k \sum_{i=1}^{k} \psi(T^i ( \omega) ) = \mathbb{E}[\psi| \mathscr{I}_P(T)](\om)
$$
for $P$-a.e. $\omega \in \Omega$.
\end{thm}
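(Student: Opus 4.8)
The plan is to derive Birkhoff's theorem from the \emph{Maximal Ergodic Theorem}, which is the only genuinely non-elementary ingredient; once it is available, the rest is routine manipulation with the invariant $\sigma$-algebra and the conditional expectation. First I record two harmless reductions. Writing $A_k\psi:=\frac1k\sum_{j=0}^{k-1}\psi\circ T^j$, note that $\frac1k\sum_{i=1}^{k}\psi(T^i(\om))=(A_k\psi)(T(\om))$; since $T$ is $P$-preserving and $\mathbb{E}[\psi\,|\,\mathscr{I}_P(T)]$ is $\mathscr{I}_P(T)$-measurable, hence $T$-invariant, it suffices to prove that $A_k\psi\to\mathbb{E}[\psi\,|\,\mathscr{I}_P(T)]$ $P$-a.e.\ (then composing with $T$ gives the stated form). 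Moreover, by treating its real and imaginary parts separately, we may assume $\psi$ is real-valued. I abbreviate $S_0\psi:=0$ and $S_k\psi:=\sum_{j=0}^{k-1}\psi\circ T^j$, so that $A_k\psi=S_k\psi/k$.

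The key step is the Maximal Ergodic Theorem: for every $f\in L^1(\Om,\T,P)$ and every $N\ge1$, setting $M_N f:=\max_{1\le k\le N}S_k f$, one has $\int_{\{M_N f>0\}}f\,dP\ge0$. I would prove this by Garsia's telescoping argument. Put $F_N:=\max_{0\le k\le N}S_k f\ge0$. For $1\le k\le N$ the identity $S_k f=f+(S_{k-1}f)\circ T$ together with $S_{k-1}f\le F_N$ gives $S_k f\le f+F_N\circ T$, and since also $S_0 f=0$ we obtain $F_N\le\max(0,\,f+F_N\circ T)$. On $\{M_N f>0\}$ we have $F_N=M_N f>0$, so the maximum on the right is $f+F_N\circ T$, i.e.\ $f\ge F_N-F_N\circ T$ there. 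Integrating over $\{M_N f>0\}$, using that $F_N\equiv0$ off this set, that $F_N\ge0$, and that $\int_\Om F_N\circ T\,dP=\int_\Om F_N\,dP$ because $T$ preserves $P$, yields $\int_{\{M_N f>0\}}f\,dP\ge\int_\Om F_N\,dP-\int_\Om F_N\,dP=0$. Letting $N\to\infty$ and applying dominated convergence gives $\int_{\{\sup_{k\ge1}S_k f>0\}}f\,dP\ge0$.

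With the maximal inequality in hand I would conclude as follows. Set $\bar\psi:=\mathbb{E}[\psi\,|\,\mathscr{I}_P(T)]$, $\psi^*:=\limsup_k A_k\psi$ and $\psi_*:=\liminf_k A_k\psi$. From $S_{k+1}\psi=\psi+(S_k\psi)\circ T$ one checks that $\psi^*$ and $\psi_*$ are $T$-invariant, hence $\mathscr{I}_P(T)$-measurable, and so is $\bar\psi$, with $\bar\psi\circ T=\bar\psi$ $P$-a.e. Fix $\e>0$ and put $B:=\{\psi^*>\bar\psi+\e\}\in\mathscr{I}_P(T)$. Apply the maximal inequality to $g:=(\psi-\bar\psi-\e)\,\mathbf{1}_B\in L^1$: since $B$ and $\bar\psi$ are $T$-invariant, $S_k g=(S_k\psi-k\bar\psi-k\e)\,\mathbf{1}_B$, so $\{\sup_{k\ge1}S_k g>0\}=B$ (it is empty on $B^c$, and on $B$ one has $\sup_k A_k\psi\ge\psi^*>\bar\psi+\e$). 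Hence $0\le\int_B g\,dP=\int_B(\psi-\bar\psi)\,dP-\e\,P(B)=-\e\,P(B)$, the first integral vanishing by the defining property of the conditional expectation since $B\in\mathscr{I}_P(T)$; therefore $P(B)=0$. Letting $\e\downarrow0$ gives $\psi^*\le\bar\psi$ $P$-a.e., and running the same argument for $-\psi$ (whose conditional expectation is $-\bar\psi$, and with $(-\psi)^*=-\psi_*$) gives $\psi_*\ge\bar\psi$ $P$-a.e. Since $\psi_*\le\psi^*$ pointwise, all three coincide $P$-a.e., i.e.\ $A_k\psi\to\bar\psi$ $P$-a.e., and composing with $T$ and using $\bar\psi\circ T=\bar\psi$ yields the convergence of $\frac1k\sum_{i=1}^{k}\psi(T^i(\om))$ to $\mathbb{E}[\psi\,|\,\mathscr{I}_P(T)](\om)$ as stated.

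The main obstacle is the Maximal Ergodic Theorem of the second step: Garsia's telescoping estimate is the one place where something must genuinely be \emph{proved} rather than bookkept. After that, the only delicate points are the $T$-invariance of $\psi^*$, $\psi_*$ and $\bar\psi$, the integrability of the auxiliary function $g$ (immediate here since $\bar\psi\in L^1$), and — if one wants the $P$-a.e.\ statements to transfer cleanly under composition with $T$ — the fact that $T^{-1}$ preserves $P$-null sets, which is part of $T$ being a $P$-preserving transformation.
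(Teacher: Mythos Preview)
Your proof is correct and follows the classical route via Garsia's proof of the Maximal Ergodic Theorem, followed by the standard $\limsup/\liminf$ sandwich against the conditional expectation. The paper, however, does not supply its own proof of this theorem: it merely states Birkhoff's theorem as a classical result and refers the reader to \cite[Theorem~2.1.5]{K}. So there is no in-paper argument to compare yours against; what you have written is essentially the textbook proof one finds in Krengel or in most ergodic-theory references, and it is carried out cleanly, including the small reduction from the averages $\frac1k\sum_{i=1}^{k}\psi\circ T^i$ to the more usual $A_k\psi=\frac1k\sum_{j=0}^{k-1}\psi\circ T^j$ via composition with $T$.
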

We also recall the Conditional Dominated Convergence Theorem, 
whose proof can be found in \cite[Theorem 2.7]{Bhatta}.
\begin{thm}[Conditional Dominated Convergence]\label{CDC}
Let $ \T' \subset \T$ be a $\sigma$-algebra and let $(\varphi_k)$ be a sequence of  random variables in $(\Omega,\T,P)$ converging  pointwise $P$-a.e.\ in $\Om$ to a random variable $\varphi$. Suppose that there exists $\psi \in L^1(\Omega,\T,P)$ such that $|\varphi_k|\leq \psi$ $P$-a.e.\ in $\Om$ for every $k$. Then 
$\mathbb{E}[\varphi_k| \T'](\om)\to \mathbb{E}[\varphi| \T'](\om)$ for $P$-a.e.\ $\om\in\Om$.
\end{thm}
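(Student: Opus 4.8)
The plan is to reproduce the classical proof of the Dominated Convergence Theorem, with the Lebesgue integral replaced by the conditional expectation; the only extra ingredients needed are the monotonicity of the map $\psi\mapsto\mathbb{E}[\psi|\T']$ and a conditional version of the Monotone Convergence Theorem, both of which follow at once from the defining property of $\mathbb{E}[\,\cdot\,|\T']$.

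First I would record that, since $|\varphi_k|\le\psi$ $P$-a.e.\ and $\varphi_k\to\varphi$ $P$-a.e., letting $k\to+\infty$ gives $|\varphi|\le\psi$ $P$-a.e., so $\varphi\in L^1(\Om,\T,P)$ and all the conditional expectations $\mathbb{E}[\varphi_k|\T']$, $\mathbb{E}[\varphi|\T']$ are well defined. Next, $\mathbb{E}[\,\cdot\,|\T']$ is monotone: if $X\le Y$ $P$-a.e.\ then $\mathbb{E}[X|\T']\le\mathbb{E}[Y|\T']$ $P$-a.e., since otherwise the set $E:=\{\mathbb{E}[X|\T']>\mathbb{E}[Y|\T']\}\in\T'$ would have positive probability while $\int_E X\,dP=\int_E\mathbb{E}[X|\T']\,dP>\int_E\mathbb{E}[Y|\T']\,dP=\int_E Y\,dP$, a contradiction. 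Applying this to $-|\varphi_k-\varphi|\le\varphi_k-\varphi\le|\varphi_k-\varphi|$ together with linearity yields, $P$-a.e.,
\[
\bigl|\mathbb{E}[\varphi_k|\T'](\om)-\mathbb{E}[\varphi|\T'](\om)\bigr|\le\mathbb{E}\bigl[\,|\varphi_k-\varphi|\,\big|\,\T'\,\bigr](\om),
\]
so it suffices to prove that the right-hand side tends to $0$ for $P$-a.e.\ $\om$.

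To this end set $h_k:=\sup_{j\ge k}|\varphi_j-\varphi|$, a $\T$-measurable function with $0\le h_k\le 2\psi$ $P$-a.e.\ and $h_k\downarrow0$ $P$-a.e.\ (because $\varphi_j\to\varphi$ $P$-a.e.). By monotonicity $\mathbb{E}[\,|\varphi_k-\varphi|\,|\,\T']\le\mathbb{E}[h_k|\T']$ $P$-a.e., so it is enough to show $\mathbb{E}[h_k|\T']\downarrow0$ $P$-a.e. The sequence $2\psi-h_k$ is nonnegative, $\T$-measurable, and increases $P$-a.e.\ to $2\psi\in L^1$; the conditional Monotone Convergence Theorem then gives $\mathbb{E}[2\psi-h_k|\T']\uparrow\mathbb{E}[2\psi|\T']$ $P$-a.e. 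This conditional MCT is itself deduced from the ordinary one: by monotonicity $\mathbb{E}[2\psi-h_k|\T']$ is increasing, hence has a $\T'$-measurable $P$-a.e.\ limit $Y$, and for every $E\in\T'$ two applications of the ordinary MCT give $\int_E Y\,dP=\lim_k\int_E\mathbb{E}[2\psi-h_k|\T']\,dP=\lim_k\int_E(2\psi-h_k)\,dP=\int_E 2\psi\,dP$, so $Y=\mathbb{E}[2\psi|\T']$ $P$-a.e. Subtracting the $P$-a.e.\ finite quantity $\mathbb{E}[2\psi|\T']$ yields $\mathbb{E}[h_k|\T']\downarrow0$ $P$-a.e., and combining with the displayed inequality concludes the proof.

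I do not expect a genuine obstacle: the argument is routine. The only points needing attention are the elementary monotonicity of the conditional expectation recalled above and the bookkeeping of null sets — one for each bound $|\varphi_k|\le\psi$, one for the convergence $\varphi_k\to\varphi$, and the countably many arising from the monotonicity steps and from the conditional MCT — which I would collect into a single $P$-null set outside of which all the estimates above hold simultaneously.
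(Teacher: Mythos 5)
Your proof is correct and complete. Note, however, that the paper does not prove this statement at all: it is quoted as a known result, with the proof delegated to the reference [Bhattacharya--Waymire, Theorem 2.7]. Your argument is the standard one and fills that gap in a self-contained way: the reduction $\bigl|\mathbb{E}[\varphi_k|\T']-\mathbb{E}[\varphi|\T']\bigr|\le\mathbb{E}[\,|\varphi_k-\varphi|\,|\,\T']\le\mathbb{E}[h_k|\T']$ with $h_k=\sup_{j\ge k}|\varphi_j-\varphi|$, followed by the conditional Monotone Convergence Theorem applied to $2\psi-h_k\uparrow 2\psi$, is exactly the classical scheme. All the individual steps are sound: the monotonicity of $\mathbb{E}[\,\cdot\,|\T']$ is correctly derived from the defining identity (the strict inequality on the positive-measure set $E\in\T'$ does yield the contradiction), the limit $Y$ in your conditional MCT is $\T'$-measurable and dominated by the integrable function $\mathbb{E}[2\psi|\T']$, so the identification $Y=\mathbb{E}[2\psi|\T']$ via the two applications of the ordinary MCT is legitimate, and the final subtraction is justified because $\mathbb{E}[2\psi|\T']$ is $P$-a.e.\ finite. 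The bookkeeping of the countably many null sets is also handled properly. Nothing further is needed.
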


We are now ready to prove the main result of this section.

\begin{proof}[Proof of Theorem \ref{Ergodic-thm-surf2}] Let $g_{\mathrm{hom}}$  and $\widetilde\Om$  be as in Theorem \ref{Ergodic-thm-surf} and let $\widehat\Om$ be as in Theorem \ref{invariance3}. We will prove the existence of a set $\Omega'\in\T$, with $\Omega'\subset \widehat\Om$ and $P(\Om')=1$,  such that \eqref{Key:g} holds for every $\om \in \Omega'$.
In the following, for every $z\in\Z^n$ the 
sub-$\sigma$-algebra of invariant sets for the measure-preserving map $\tau_{z}$
 is denoted by $\I_{z} \subset \T$
\ie
$\I_{z}:=\{E \in \T : \tau_{z}(E)=E\}.$
Also, for given $\zeta\in \R^m_0$, $\nu\in \Sph^{n-1}$, 
  $\eta>0$, 
we define the sequence of events $(E^{\zeta,\nu,\eta}_j)_{j\in\N}$ as
$$
E^{\zeta,\nu,\eta}_j:=\Big\{\om\in \Om : \Big|\frac{m^{\mathrm{pc}}_{G(\om)}(u_{0,\zeta,\nu},Q^{\nu}_k(0))}{k^{n-1}}-g_{\mathrm{hom}}(\omega,\zeta,\nu)\Big|\leq \eta \hbox{ for every integer } k \geq j \Big\}.
$$
We divide the proof into several steps.  We use the notation for the integer part introduced in (q), Section~\ref{Notation}.
\medskip

\noindent
\textbf{Step 1.}  Let us fix  $z\in\Z^n$, $\zeta\in \R^m_0$, $\nu\in \Sph^{n-1}$, 
 and $\eta>0$. We prove that  there exists a set $ \widetilde\Om^{\zeta,\nu,\eta}_{z}  \in \T$, with $\widetilde\Om^{\zeta,\nu,\eta}_{z}\subset\widehat{\Om}$ and $P( \widetilde\Om^{\zeta,\nu,\eta}_{z})=1$, satisfying the following property:

for every $\delta>0$ and every   $\om\in\widetilde\Om^{\zeta,\nu,\eta}_{z}$
there exists an integer $j_0=j_0(\zeta,\nu,\eta,z,\om, \delta)$ 
such that
\begin{equation}\label{enne_0}
\mathbb{E}[\chi_{E^{\zeta,\nu,\eta}_{j_0}} |\I_{z}](\om) >1-\delta. 
\end{equation}

To prove \eqref{enne_0}  we apply  Theorem \ref{Ergodic-thm-surf}  and we obtain  (since $\widehat{\Omega}\subset \widetilde{\Omega}$)
\begin{equation*}
\lim_{j\to+\infty}\chi_{E^{\zeta,\nu,\eta}_{j}}(\om) = 1 \quad\text{for every }\om\in \widehat\Om.
\end{equation*}
 By the  Conditional Dominated Convergence Theorem \ref{CDC}   there exists  a  set $\widetilde\Om^{\zeta,\nu,\eta}_{z} \in \T$, with $ \widetilde\Om^{\zeta,\nu,\eta}_{z}\subset \widetilde\Om$ and $P(\widetilde\Om^{\zeta,\nu,\eta}_{z})=1$, such that
\begin{equation}\label{lim1}
\lim_{j\to +\infty}\mathbb{E}[\chi_{E^{\zeta,\nu,\eta}_{j}} |\I_{z}](\om)=\mathbb{E}[1 |\I_{z}](\om)=1
\quad \text{for every }\om\in\widetilde\Om^{\zeta,\nu,\eta}_{z}.
\end{equation}
 Given  $\om\in \widetilde\Om^{\zeta,\nu,\eta}_{z}$ and $\delta>0$,  the existence of $j_0$ satisfying    \eqref{enne_0} follows  from  \eqref{lim1}. 

\medskip

\noindent
\textbf{Step 2.}  Let $z$, $\zeta$, $\nu$, and
$\eta$ be as in Step 1 and let $0<\delta<\frac14$.
We prove that there exist a set $\Om^{\zeta,\nu,\eta}_{z} \in \T$, with $\Om^{\zeta,\nu,\eta}_{z}\subset\widetilde\Om^{\zeta,\nu,\eta}_{z}$ and $P(\Om^{\zeta,\nu,\eta}_{z})=1$, and an integer constant $m_0=m_0(\zeta,\nu,\eta,z,\om, \delta)> \frac1\delta$ satisfying the following property:

for every $\om\in \Om^{\zeta,\nu,\eta}_{z}$ and for every integer $m\ge m_0$ there exists $ i =i(\zeta,\nu,\eta,z,\om, \delta, m) \in \{m+1,\dots,m+\ell\}$, with $\ell:= \lfloor 5m\delta\rfloor$, such that 
\begin{equation}\label{stima-centro-sbagliato}
\bigg|\frac{m^{\mathrm{pc}}_{G(\om)}(u_{-i z,\zeta,\nu},Q^{\nu}_k(-i z))}{k^{n-1}}-g_{\mathrm{hom}}(\om,\zeta,\nu)\bigg|\leq \eta \qquad \text{for every\ }  k\geq j_0,
\end{equation}
 where $j_0=j_0(\zeta,\nu,\eta,z,\om, \delta)$ is the  integer introduced in Step 1.

To prove \eqref{stima-centro-sbagliato} we apply  Birkhoff Ergodic Theorem \ref{birk}  with $\psi:=\chi_{E^{\zeta,\nu,\eta}_{j}}$ and $T:=\tau_{z}$, and we obtain that
there exists a set $\Om^{\zeta,\nu,\eta}_{z} \in \T$, with $\Om^{\zeta,\nu,\eta}_{z}\subset\widetilde\Om^{\zeta,\nu,\eta}_{z}$ and $P(\Om^{\zeta,\nu,\eta}_{z})=1$, such that

\begin{equation}\label{lim2}
\lim_{m\to+\infty}\frac{1}{m}\sum_{i=1}^m \chi_{E^{\zeta,\nu,\eta}_{j}}(\tau_{iz}(\om))=\mathbb{E}[\chi_{E^{\zeta,\nu,\eta}_{j}} |\I_{z}](\om)
\end{equation}
for every $j \in \mathbb{N}$ and every $\om\in\Om^{\zeta,\nu,\eta}_{z}$. 
In particular, for a given $\om\in\Om^{\zeta,\nu,\eta}_{z}$, equality \eqref{lim2} holds for the index $j_0=j_0(\zeta,\nu,\eta,z,\om, \delta)$
introduced in Step 1. 
 Therefore,  there exists an integer $\hat m=\hat m(\zeta,\nu,\eta,z,\om, \delta)$ such that
\begin{equation}\label{BET}
\frac{1}{m}\sum_{i=1}^m \chi_{E^{\zeta,\nu,\eta}_{j_0}}(\tau_{iz}(\om))
>
\mathbb{E}[\chi_{E^{\zeta,\nu,\eta}_{j_0}} |\I_{z}](\om)- \delta
\qquad \text{for every } \, m\geq \hat m.
\end{equation}
Fix now an integer $m\ge m_0:= \max\{2\hat m, 2j_0,\lfloor\frac1\delta\rfloor+1\}$ and set $\ell:= \lfloor 5m\delta\rfloor$. 
We claim that 
\begin{equation} \label{existence i}
\text{there exists } i =i(\zeta,\nu,\eta,z,\om, \delta, m) \in \{m+1,\dots,m+\ell\}
\text{ such that } \tau_{i z}(\om) \in E^{\zeta,\nu,\eta}_{j_0}.
\end{equation}
Suppose, by contradiction, that \eqref{existence i} fails.
Then, we have 
$$
\tilde \ell := \# \{ i \in \mathbb{N}, 1 \leq i \leq m:  \chi_{E^{\zeta,\nu,\eta}_{j_0} }(\tau_{iz}(\om))=1\}
= \# \{ i \in \mathbb{N}, 1 \leq i \leq m + \ell:  \chi_{E^{\zeta,\nu,\eta}_{j_0} }(\tau_{iz}(\om))=1\}.
$$
So, \eqref{BET} with $m$ replaced by $m+\ell$ gives
\begin{equation}\label{m+l}
\frac{\tilde\ell}{m+\ell} 
= \frac{1}{m+\ell}\sum_{i=1}^{m+\ell} \chi_{E^{\zeta,\nu,\eta}_{j_0}}(\tau_{iz}(\om))
> \mathbb{E}[\chi_{E^{\zeta,\nu,\eta}_{j_0}} |\I_{z}](\om)- \delta.
\end{equation}
Therefore, using \eqref{enne_0} and \eqref{m+l} we obtain
\begin{equation}\label{bound-on-L}
\delta > \mathbb{E}[\chi_{E^{\zeta,\nu,\eta}_{j_0}} |\I_{z}](\om) - \frac{\tilde\ell}{m+\ell}
=\mathbb{E}[\chi_{E^{\zeta,\nu,\eta}_{j_0}} |\I_{z}](\om)-1+\frac{\ell+m-\tilde\ell}{m+\ell}> \frac{\ell+m-\tilde\ell}{m+\ell} -\delta.
\end{equation}
Since $m-\tilde\ell \geq 0$, from \eqref{bound-on-L} we deduce that 
$\ell(1-2\delta)< 2m\delta$. This, using the fact that $\delta < \frac14$,  
gives $\ell<4m\delta$.
On the other hand, by definition $\ell= \lfloor 5m\delta\rfloor \geq 5m\delta -1 >  4 m\delta$, since $m>\frac1\delta$. This  contradicts the inequality  $\ell < 4m\delta$ and  proves \eqref{existence i}. 
As a consequence, by the definition of $ E^{\zeta,\nu,\eta}_{j_0}$,
$$
\bigg|\frac{m^{\mathrm{pc}}_{G(\tau_{i z}(\om))}(u_{0,\zeta,\nu},Q^{\nu}_k(0))}{k^{n-1}}-g_{\mathrm{hom}}(\tau_{i z}(\om),\zeta,\nu)\bigg|\leq \eta
$$
for every integer $k\geq j_0$.
Since $\om\in\Om^{\zeta,\nu,\eta}_{z}\subset \widehat{\Om}$, 
thanks to \eqref{c:invariance} and \eqref{trasl:1} we get \eqref{stima-centro-sbagliato}.

\medskip

\noindent
\textbf{Step 3.} We show that the result we want to prove is true along integers.  More precisely, we prove that 
there exists $\Om' \in \mathcal{T}$,  with $\Om' \subset \Om$ and $P(\Om')=1$,  such that
\begin{equation}\label{lim on integers}
\lim_{\substack{ k  \to+\infty\\  k  \in\N}}\frac{m^{\mathrm{pc}}_{G(\om)}
(u_{- k   z,\zeta,\nu},Q^{\nu}_{ m_k }(- k   z))}{
 m_k^{ n-1}}=g_{\mathrm{hom}}(\om,\zeta,\nu)
\end{equation}
for every $\omega \in \Om'$, $z \in\Z^n$, $\zeta\in \Q^m_0$, $\nu\in \Q^n\cap \Sph^{n-1}$, and for every 
sequence of integers $(m_k)$ such that $m_k\ge k$ for every~$k$.

 To prove this property, we define $\Om'$ as the intersection of the sets $\Om^{\zeta,\nu,\eta}_{z}$ (introduced in Step~2) for $\zeta\in \Q_0^m$, $\nu\in \Q^n\cap \Sph^{n-1}$, $\eta\in \Q$, with $\eta>0$, and $z\in\Z^n$. It  is clear that $\Omega'\subset \widehat\Om$ and $P(\Om')=1$. 
Let us fix $\omega$, $z$, $\zeta$, $\nu$ and $(m_k)$ as required. Moreover, let us fix $\delta>0$, with $20\,\delta\,(|z|+1)<1$, and $\eta\in \Q$, with $\eta>0$. Let $m_0=m_0(\zeta,\nu,\eta,z,\om, \delta)$ be as in Step 2.
For every $k\ge 2 m_0$ let $\underline m_k, \overline m_k\in \Z$   be defined as 
$$
 \underline m_k  :=  m_k -2(i_k- k )\lfloor |z|+1\rfloor \qquad \textrm{and} \qquad   \overline m_k  :=  m_k +2(i_k- k ) \lfloor|z|+1\rfloor,
$$%
 where $i_k=i(\zeta,\nu,\eta,z,\om, \delta, k)$ is the index introduced in Step 2 corresponding to $m=k$. 
Clearly $ \underline m_k  \leq m_k  \leq  \overline m_k $. Moreover, since  $|z|<\lfloor |z|+1\rfloor$, we have that 
\begin{equation}\label{cubi-inscatolati}
Q^\nu_{ \underline m_k } (-i_k  z) \subset\subset Q^\nu_{ m_k} (- k z) \subset\subset Q^\nu_{ \overline m_k } (-i_k  z).
\end{equation}

Let us now compare  the  minimisation problems for $G(\om)$ relative to the cubes in \eqref{cubi-inscatolati}. 
For every $k$  let $ u_k\in SBV_{\mathrm{pc}}(Q^{\nu}_{ m_k}(- k  z),\R^m)$ be such that with $u_k=u_{- k  z,\zeta,\nu}$ in a neighbourhood of $\partial Q^{\nu}_{ m_k  }(- k  z)$ and
\begin{equation}\label{quasi min}
G(\om)(u_k,Q^{\nu}_{  m_k}(- k  z))\le 
m^{\mathrm{pc}}_{G(\om)}(u_{- k  z,\zeta,\nu},Q^{\nu}_{  m_k }(- k   z))+\eta;
\end{equation}
thanks to \eqref{cubi-inscatolati} the extension of $u_k$ defined as 
$$
v_k(y):=\begin{cases} u_k(y) & \text{if }\; y\in Q^\nu_{ m_k }(- k z),\cr
u_{-i_k z,\zeta,\nu}(y) & \text{if }\; y\in Q^\nu_{ \overline m_k }(-i_k z) \setminus 
Q^\nu_{ m_k }(- k z),
\end{cases}
$$%
belongs to $SBV_{\mathrm{pc}}(Q^{\nu}_{ \overline m_k }(-i_k z),\R^m)$ and  satisfies  $ v_k=u_{-i_k z,\zeta,\nu}$ in a neighbourhood of $\partial Q^{\nu}_{ \overline m_k }(-i_k z)$.
By  the  definition of $v_k$ it follows that $S_{v_k}\subset S_{u_k}\cup \Sigma_k^1\cup \Sigma_k^2$, where 
\begin{align*}
&\Sigma_k^1:= \big\{y\in \partial Q^\nu_{ m_k }(- k z): \big((y+ k z){\,\cdot\,}\nu\big) \big((y+i_k z){\,\cdot\,}\nu\big) <0\big\},
\\
&\Sigma_k^2:=\Pi^\nu_{-i_k z}\cap(Q^\nu_{ \overline m_k }(-i_k z) \setminus Q^\nu_{ m_k }(- k z)).
\end{align*}
Moreover $|[v_k]|=|\zeta|$ $\hs^{n-1}$-a.e.\ on $\Sigma_k^1 \cup \Sigma_k^2$. Since  $ 20\delta (|z|+1)<1$, $k\le m_k$, and $i_k-k\le  5k\delta$ by \eqref{existence i},  we  obtain  $| k z-i_k z|\le  (i_k-k)|z|\le 5 k \delta |z| \le 5 m_k\delta |z|< \frac{m_k}{2}$.  Moreover,  $ \overline m_k -m_k
=  2(i_k-k) \lfloor |z|+1\rfloor \le 10  k\delta \lfloor |z|+1\rfloor  \le  10  m_k\delta \lfloor |z|+1\rfloor
 < \frac{m_k}{2}$,  hence  $ \overline m_k <  2m_{ k}$. From the previous inequalities we obtain
  $\hs^{n-1}(\Sigma_k^1)\le  10 (n-1)  \delta |z|  m^{n-1}_{ k}$ and $\hs^{n-1}(\Sigma_k^2)= \overline m_k ^{n-1}-  m^{n-1}_{ k}\le 5(n-1)2^{n-1}\delta\lfloor |z|+1\rfloor \,m^{n-1}_{ k}$.
Then by the growth condition $(g6)$ we have
$$
G(\om)(v_k, Q^\nu_{ \overline m_k }(-i_k z))\le G(\om)(u_k,Q^{\nu}_{ m_k}(- k z)) + C_{\zeta,z}\delta\, m^{n-1}_{ k},
$$%
where $C_{\zeta,z} := c_5 \, 5 (n-1) (2+ 2^{n-1})  \lfloor |z|+1\rfloor (1+|\zeta|)$. This inequality, combined with
\eqref{quasi min} and with the definition of  $m^{\mathrm{pc}}_{G(\om)}$, gives
\begin{equation}\label{stima-ell2}
m^{\mathrm{pc}}_{G(\om)}(u_{-i_k z,\zeta,\nu},Q^{\nu}_{ \overline m_k }(-i_k z)) 
\leq m^{\mathrm{pc}}_{G(\om)}(u_{- k z,\zeta,\nu},Q^{\nu}_{ m_k}(- k  z))+\eta+
C_{\zeta,z}\delta\,m^{n-1}_{ k }.
\end{equation}
Thus,  dividing  all  terms in \eqref{stima-ell2} by $ \overline m_k ^{n-1}$ and recalling that $ \overline m_k \geq m_k$,  we get
\begin{equation}\label{ell2}
\frac{m^{\mathrm{pc}}_{G(\om)}(u_{-i_k  z,\zeta,\nu},Q^{\nu}_{ \overline m_k }(-i_k  z))}{ \overline m_k^{n-1}} \leq \frac{m^{\mathrm{pc}}_{G(\om)}(u_{- k  z,\zeta,\nu},Q^{\nu}_{m_k }(- k  z))}{ m^{n-1}_{ k }}+\frac{\eta}{ m_k^{n-1}  } +C_{\zeta,z}\delta.
\end{equation}
By the definition of $ \underline m_k $ and since $m_k \geq  \underline m_k $, a similar argument yields 
\begin{equation}\label{ell1}
\frac{m^{\mathrm{pc}}_{G(\om)}(u_{- k  z,\zeta,\nu},Q^{\nu}_{m_k }(- k  z))}{  m^{n-1}_k } \leq \frac{m^{\mathrm{pc}}_{G(\om)}(u_{-i_k  z,\zeta,\nu},Q^{\nu}_{ \underline m_k }(-i_k  z))}{ \underline m^{n-1}_k} +\frac{\eta}{ m_k^{n-1}  } + 2  C_{\zeta,z}\delta.
\end{equation}

 Since  $\underline m_k\to +\infty$ as $k\to +\infty$ and  $\overline m_k \geq  \underline m_k$ for every $k$, we have 
$\overline m_k \geq \ \underline m_k \geq  j_0$ for $k$ large enough, where $j_0=j_0(\zeta,\nu,\eta,z,\om, \delta)$ is the integer introduced in Step 1. 
As  $\om\in \Om^{\zeta,\nu,\eta}_{z}$ , gathering \eqref{stima-centro-sbagliato}, \eqref{ell2}, and \eqref{ell1} gives 
$$
\bigg|\frac{m^{\mathrm{pc}}_{G(\om)}(u_{- k  z,\zeta,\nu},Q^{\nu}_{ m_k}(- k  z))}{ m_k ^{n-1}}-g_{\mathrm{hom}}(\om,\zeta,\nu)\bigg| \leq   \eta + \frac{\eta}{m_k^{n-1}} + 2  C_{\zeta,z}\delta
$$
 for $k$ large enough. 
We conclude that
\begin{equation*}
\limsup_{\substack{ k \to+\infty\\ k \in\N}}\bigg|\frac{m^{\mathrm{pc}}_{G(\om)}(u_{- k  z,\zeta,\nu},Q^{\nu}_{ m_k }(- k  z))}{  m_k^{n-1}}-g_{\mathrm{hom}}(\om,\zeta,\nu)\bigg| \leq  \eta + 2 C_{\zeta,z}\delta.
\end{equation*}
 Since this inequality holds  for  every  $\delta>0$, with $20\delta( |z|+1)<1$, and every $\eta\in \Q$, with $\eta>0$, we obtain \eqref{lim on integers}.

\medskip

\noindent
\textbf{Step 4.} We show that \eqref{Key:g} holds when $\zeta$, and $\nu$ have rational coordinates. Namely,
given  $\om \in \Om'$ (the set introduced in Step~3), $x \in\R^n$, $\zeta\in \Q^m_0$, $\nu\in \Q^n\cap \Sph^{n-1}$, and a function $r\colon(0,+\infty)\to(0,+\infty)$, with $r(t)\ge t$ for every $t>0$, we prove that \eqref{Key:g} holds.

To this aim, we fix $\eta>0$. Then there exist $ q \in\Q^n$  such that $| q-x|<\eta$ and $h\in\N$ such that $z:=h q \in\Z^n$.

Let $(t_k)$ be a sequence of real numbers with $t_k \to +\infty$  and let $s_k:=t_k/h$. By the definition of $m^{\mathrm{pc}}_{G(\om)}$ for every $k$ there exists $ \hat u_k\in SBV_{\mathrm{pc}}(Q^{\nu}_{ r(t_k) }(t_k x),\R^m)$, with $ \hat u_k=u_{t_k x,\zeta,\nu}$ in a neighbourhood of $\partial Q^{\nu}_{ r(t_k) }(t_k  x)$, such that 
\begin{equation}\label{quasi min k}
G(\om)( \hat u_k,Q^{\nu}_{ r(t_k) }(t_k  x))\le m^{\mathrm{pc}}_{G(\om)}(u_{t_k x,\zeta,\nu}, Q^{\nu}_{ r(t_k) }(t_k x))+\eta.
\end{equation}
 We fix an integer $j>2|z|+1$ and define $r_k:=\lfloor r(t_k)+2\eta t_k\rfloor+j$. It is easy to check that  
\begin{equation*}
Q^\nu_{ r(t_k) }(t_k  x) \subset\subset  Q^\nu_{ r_k}( \lfloor s_k \rfloor  z).
\end{equation*}
As usual, we can extend $ \hat u_k$ to $Q^\nu_{ r_k}( \lfloor s_k \rfloor z )$ as
$$
 \hat v_k(y):=
\begin{cases} 
\hat u_k(y) & \text{if } y\in  Q^\nu_{ r(t_k) }(t_k x) \\
u_{ \lfloor s_k \rfloor  z,\zeta,\nu}(y) & \text{if } y\in Q^\nu_{ r_k}( \lfloor s_k \rfloor  z)
\setminus Q^\nu_{ r(t_k) }(t_k x).
\end{cases}
$$%
Then $ \hat v_k\in SBV_{\mathrm{pc}}(Q^\nu_{ r_k}( \lfloor s_k \rfloor  z),\R^m)$ and $ \hat v_k=u_{ \lfloor s_k \rfloor  z,\zeta,\nu}$ in a neighbourhood of 
$\partial Q^\nu_{ r_k}( \lfloor s_k \rfloor  z)$.
By  the  definition of $ \hat v_k$ it follows that $S_{ \hat v_k}= S_{ \hat u_k}\cup  \hat \Sigma^1_k\cup  \hat \Sigma^2_k$, where 
\begin{align*}
& \hat \Sigma^1_k:= \big\{y\in \partial Q^\nu_{ r(t_k) }(t_k x): \big((y  -  t_k x){\,\cdot\,}\nu\big)\big((y- \lfloor s_k \rfloor  z){\,\cdot\,}\nu\big)<0\big\}, \\ 
& \hat \Sigma^2_k:=\Pi^\nu_{ \lfloor s_k \rfloor  z}\cap(Q^\nu_{ r_k}( \lfloor s_k \rfloor  z)\setminus Q^\nu_{ r(t_k) }(t_k x)).
\end{align*}
Moreover $|[ \hat v_k]|=|\zeta|$ $\hs^{n-1}$-a.e.\ on $ \hat \Sigma^1_k\cup  \hat \Sigma^2_k$.  Since 
$|(t_kx-\lfloor s_k \rfloor z){\,\cdot\,}\nu|\le |t_k x - t_k  q| + |s_k z - \lfloor s_k \rfloor z | \le t_k\eta + |z|$ we have
  $\hs^{n-1}( \hat \Sigma^1_k)\le  2(n-1) r(t_k)^{n-2}(t_k\eta + |z|)$ and $\hs^{n-1}( \hat \Sigma^2_k)
= r_k^{n-1}- r(t_k) ^{n-1}\le (n-1)( r(t_k)+2\eta t_k+j)^{n-2}(2\eta t_k + j)$.
Then by the growth conditions $(g6)$ we have
$$
G(\om)( \hat v_k,Q^\nu_{ r_k }( \lfloor s_k \rfloor  z))\le 
G(\om)( \hat u_k,Q^\nu_{ r(t_k) }( t_k x ))+
 C_{\zeta}( r(t_k) +2\eta t_k +j)^{n-2} (2\eta t_k + j),
$$%
where $C_{\zeta}:= 2(n-1) c_5 (1+|\zeta|) $. This inequality, combined with
\eqref{quasi min k} and with the definition of  $m^{\mathrm{pc}}_{G(\om)}$, gives
\begin{equation*}
m^{\mathrm{pc}}_{G(\om)}(u_{ \lfloor s_k \rfloor z,\zeta,\nu},Q^\nu_{ r_k }( \lfloor s_k \rfloor  z) \leq m^{\mathrm{pc}}_{G(\om)}(u_{t_k x,\zeta,\nu}, Q^{\nu}_{ r(t_k)}(t_k x))+\eta + 
C_{\zeta} (1+3\eta)^{n-2}3\eta\, r(t_k)^{n-1},
\end{equation*}
 for $k$ large enough so that $2\eta t_k+j\le 3\eta r(t_k)$. 
Dividing  all  terms  of the previous inequality 
by $ r(t_k)^{n-1}$ and recalling that $r_k\geq r(t_k)$  we get
\begin{equation*}
\frac{m^{\mathrm{pc}}_{G(\om)}(u_{ \lfloor s_k \rfloor z,\zeta,\nu},
Q^\nu_{ r_k}( \lfloor s_k \rfloor  z)}{ r_k^{n-1}} \leq 
\frac{m^{\mathrm{pc}}_{G(\om)}(u_{ t_k x  ,\zeta,\nu}, 
Q^{\nu}_{ r(t_k)}(t_k x))}{ r(t_k) ^{n-1}}+\frac{\eta}{ r(t_k)^{n-1}} +
C_{\zeta} (1+3\eta)^{n-2}3\eta.
\end{equation*}
Finally, since $\omega \in \Omega'$, $r_k\in\N$, $ z \in\Z^n$, and $r_k\ge r(t_k)\ge t_k\ge s_k\ge \lfloor s_k \rfloor$, we can apply \eqref{lim on integers}:  By  taking first the limit as $k\to +\infty$ and then as $\eta\to0+$ we obtain
\begin{equation}\label{in:uno}
g_{\mathrm{hom}}(\omega,\zeta,\nu) \leq 
\liminf_{k\to +\infty} \frac{m^{\mathrm{pc}}_{G(\om)}(u_{t_k x,\zeta,\nu},  Q^{\nu}_{ r(t_k)}(t_k x))}{ r(t_k)^{n-1}}.
\end{equation}%
A similar argument   leads to 
$$
\limsup_{k\to +\infty} \frac{m^{\mathrm{pc}}_{G(\om)}(u_{t_k x,\zeta,\nu},  Q^{\nu}_{ r(t_k)}(t_k x))}{ r(t_k)^{n-1}} \leq g_{\mathrm{hom}}(\omega,\zeta,\nu),
$$
which,  combined with \eqref{in:uno},  proves that \eqref{Key:g} holds for every $\om \in \Om'$, $x \in \R^n$, $\zeta\in \Q^m_0$, and $\nu\in \Q^n\cap \Sph^{n-1}$.

\medskip

\noindent
\textbf{Step 5.}  We conclude the proof. 
We now extend this result to the general case $\zeta\in \R^m_0$ and $\nu\in \Sph^{n-1}$. To this end we fix $\om\in \Om'$ and consider the functions $\undertilde g(\om,\cdot,\cdot,\cdot)$ and $\widetilde g(\om,\cdot,\cdot,\cdot)$ defined on $\R^n\times \R^m_0\times \Sph^{n-1}$ by \eqref{C:g-utilde} and \eqref{C:g-tilde}, with $g$ replaced by  $g(\om,\cdot,\cdot,\cdot)$. In view of 
 Step  4  we  have
\begin{equation}\label{c:ug-dens-set}
\widetilde{g}(\omega,x,\zeta,\nu)=\undertilde{g}(\omega,x,\zeta,\nu)= g_{\mathrm{hom}}(\om,\zeta,\nu)
\end{equation}
for every $x\in\R^n$,  $\zeta \in \Q^m_0$, and $\nu\in \Q^n\cap \Sph^{n-1}$. 
By Lemma \ref{continuity_in_zeta-nu} and arguing as in the last part of the proof of Theorem \ref{Ergodic-thm-surf}, we obtain that \eqref{c:ug-dens-set} holds for every $x\in\R^n$,  $\zeta \in \R^m_0$, and $\nu\in \Sph^{n-1}$. This proves \eqref{Key:g} for every $\om \in \Om'$, $x \in\R^n$, $\zeta\in \R^m_0$, and $\nu\in \Sph^{n-1}$.

Moreover, if $(\tau_z)_{z\in \Z^n}$ is ergodic, then by Corollary \ref{ergodic5} the function $g_{\mathrm{hom}}$ does not depend on $\om$ and \eqref{ergodic6} can be obtained by integrating  \eqref{lim:density}  on $\Om$, and using the Dominated Convergence Theorem thanks to \eqref{stimatildemu}.
\end{proof}

\section*{Appendix. Measurability issues}
\setcounter{thm}0
\renewcommand{\theequation}{A.\arabic{equation}}
\renewcommand{\thethm}{A.\arabic{thm}}

The main result of this section if the following proposition, which gives the 
measurability of the function $\om\mapsto m^{\mathrm{pc}}_{G(\om)}(w,A)$.
This property was crucial  in the proof of Proposition \ref{propr_ms}.
\begin{prop}\label{measurability} Let  $(\Om,\widehat\T,\widehat P)$ be the completion of the probability space $(\Om,\T, P)$, let
 $g$ be a stationary random surface integrand, and let $A\in\A$. Let $G(\om)$ be as in \eqref{surf}, with $g$ replaced by $g(\om,\cdot,\cdot,\cdot)$.  Let $w\in L^0(\R^n,\R^m)$ be such that $w|_A\in SBV_{\mathrm{pc}}(A,\R^m)\cap L^{\infty}(A,\R^m)$, and for every $\om\in \Om$ let $m^{\mathrm{pc}}_{G(\om)}(w,A)$ be as in \eqref{emmeG},  with $G$ replaced by $G(\om)$. Then the function $\om\mapsto m^{\mathrm{pc}}_{G(\om)}(w,A)$ is $\widehat\T$-measurable.
\end{prop}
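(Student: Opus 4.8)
\emph{Plan.} The idea is to reduce the infimum defining $m^{\mathrm{pc}}_{G(\om)}(w,A)$ to an infimum over a \emph{compact} class of competitors, and then to deduce the measurability of the resulting value function from the joint measurability of the surface functional via the measurable projection theorem, which is available precisely because $(\Om,\widehat\T,\widehat P)$ is complete. Since the infimum cannot be reduced to a countable family of competitors (contrary to the separable Sobolev setting of \cite{DMM1,DMM2} or the discrete setting of \cite{Alciru}), this last point is the technical core.

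\emph{Step 1 (reductions).} By Remark~\ref{min-pc-bdd} we have $m^{\mathrm{pc}}_{G(\om)}(w,A)=\inf_k m^k_{G(\om)}(w,A)$, so it suffices to prove the $\widehat\T$-measurability of $\om\mapsto m^k_{G(\om)}(w,A)$ for each integer $k>\|w\|_{L^\infty(A,\R^m)}$. Next, to get rid of the ``variable'' constraint near $\partial A$, fix an increasing sequence of open sets $A_h\subset\subset A_{h+1}\subset\subset A$ with $\bigcup_h A_h=A$ and set $m^{k,h}_{G(\om)}(w,A):=\inf G(\om)(u,A)$ over $u\in L^0(\R^n,\R^m)$ with $u|_A\in SBV_{\mathrm{pc}}(A,\R^m)\cap L^\infty(A,\R^m)$, $\|u\|_{L^\infty(A,\R^m)}\le k$, $\mathcal H^{n-1}(S_u\cap A)\le\alpha$, and $u=w$ $\mathcal L^n$-a.e.\ on the \emph{fixed} set $A\setminus\overline{A_h}$. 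A simple comparison of admissible functions (a competitor for $m^{k,h}$ is one for $m^k$ with neighbourhood $\R^n\setminus\overline{A_h}$; conversely a competitor for $m^k$ equals $w$ on $A\setminus K$ for some compact $K\subset A$, hence is admissible for $m^{k,h}$ once $\overline{A_h}\supset K$) gives $m^k_{G(\om)}(w,A)=\inf_h m^{k,h}_{G(\om)}(w,A)$. As a countable infimum of $\widehat\T$-measurable functions is $\widehat\T$-measurable, it remains to show, for fixed $k$ and $h$, that $\om\mapsto m^{k,h}_{G(\om)}(w,A)$ is $\widehat\T$-measurable.

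\emph{Step 2 (compactness of the competitor class).} Let $\mathcal U_{k,h}\subset L^1(A,\R^m)$ be the class of restrictions to $A$ of the competitors admissible in $m^{k,h}$. From $\nabla u=0$, $\|u\|_{L^\infty(A,\R^m)}\le k$, and $\mathcal H^{n-1}(S_u\cap A)\le\alpha$ one has $|Du|(A)\le 2k\alpha$, so $\mathcal U_{k,h}$ is bounded in $BV(A,\R^m)$; by the compactness theorem in $SBV_{\mathrm{pc}}$ it is relatively compact in $L^1(A,\R^m)$, and it is closed there because the constraints $\|u\|_{L^\infty}\le k$, $u=w$ on the fixed set $A\setminus\overline{A_h}$, and (by lower semicontinuity of $u\mapsto\mathcal H^{n-1}(S_u\cap A)$) $\mathcal H^{n-1}(S_u\cap A)\le\alpha$ all pass to $L^1$-limits. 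Hence $\mathcal U_{k,h}$, with the $L^1$-distance, is a compact metric space, in particular Polish.

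\emph{Step 3 (joint measurability --- the main obstacle).} Consider $\Psi\colon\Om\times\mathcal U_{k,h}\to[0,+\infty]$, $\Psi(\om,u):=G(\om)(u,A)$; the goal is to show that $\Psi$ is $(\widehat\T\otimes\B(\mathcal U_{k,h}))$-measurable. Two ingredients enter. First, for fixed $u$ the function $\om\mapsto G(\om)(u,A)=\int_{S_u\cap A}g(\om,x,[u](x),\nu_u(x))\,d\mathcal H^{n-1}(x)$ is $\T$-measurable: the $\mathcal H^{n-1}$-a.e.\ defined traces $x\mapsto[u](x)\in\R^m_0$ and $x\mapsto\nu_u(x)\in\Sph^{n-1}$ on $S_u$ admit Borel representatives, so $(\om,x)\mapsto g(\om,x,[u](x),\nu_u(x))$ is $(\T\otimes\B^n)$-measurable and nonnegative, and Fubini--Tonelli for the finite product measure $P\otimes(\mathcal H^{n-1}\mres(S_u\cap A))$ gives the $\T$-measurability of the integral. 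Second, for fixed $\om$ the functional $u\mapsto G(\om)(u,A)$ is lower semicontinuous on $\mathcal U_{k,h}$ for $L^1$-convergence, the integrand $g(\om,\cdot,\cdot,\cdot)\in\G$ being bounded below by $c_4>0$. Packaging these: $\mathcal U_{k,h}$ being compact metric, $\Psi$ is a normal integrand, e.g.\ the closed-valued multifunction $\om\mapsto\operatorname{epi}\Psi(\om,\cdot)\subset\mathcal U_{k,h}\times\R$ is $\T$-measurable --- one builds a Castaing representation from a countable dense family of competitors in $\mathcal U_{k,h}$ together with the fibrewise measurability above --- so $\Psi$ is $(\T\otimes\B(\mathcal U_{k,h}))$-measurable. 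I expect the careful verification of this normality, and in particular of the lower semicontinuity input and of the Castaing representation in the absence of any countable reduction, to be the delicate part of the argument.

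\emph{Step 4 (conclusion).} For every $t\in\R$ one has
$$
\{\om\in\Om:\ m^{k,h}_{G(\om)}(w,A)<t\}=\pi_\Om\big(\{(\om,u)\in\Om\times\mathcal U_{k,h}:\ \Psi(\om,u)<t\}\big),
$$
and the set to which the projection $\pi_\Om$ is applied lies in $\T\otimes\B(\mathcal U_{k,h})\subset\widehat\T\otimes\B(\mathcal U_{k,h})$ by Step 3. Since $\mathcal U_{k,h}$ is Polish and $(\Om,\widehat\T,\widehat P)$ is complete, the measurable projection theorem yields $\pi_\Om(\cdots)\in\widehat\T$. Hence $\om\mapsto m^{k,h}_{G(\om)}(w,A)$ is $\widehat\T$-measurable, and by Step 1 so is $\om\mapsto m^{\mathrm{pc}}_{G(\om)}(w,A)=\inf_k\inf_h m^{k,h}_{G(\om)}(w,A)$, which is the assertion.
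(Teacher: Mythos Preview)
Your reductions in Steps~1, 2 and~4 match the paper's strategy: truncate via Remark~\ref{min-pc-bdd}, fix the boundary layer $A\setminus \overline{A_h}$, obtain a compact metrisable class of competitors, and conclude with the Projection Theorem on the complete space $(\Om,\widehat\T,\widehat P)$. The genuine gap is Step~3.

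First, the lower semicontinuity claim is not justified. The hypotheses $(g1)$--$(g7)$ impose growth, symmetry and mild monotonicity on $g$, but no BV-ellipticity, subadditivity in $\zeta$, or convexity of the one-homogeneous extension in~$\nu$. Without such structural conditions the surface functional $u\mapsto \int_{S_u\cap A} g(\om,x,[u],\nu_u)\,d\hs^{n-1}$ need not be lower semicontinuous for $L^1$-convergence on $\mathcal U_{k,h}$: the bound $g\ge c_4$ only yields lsc of $u\mapsto c_4\,\hs^{n-1}(S_u\cap A)$, not of the full integral. Second, even granting lsc, the implication ``$\om\mapsto\Psi(\om,u)$ measurable for each $u$'' $+$ ``$u\mapsto\Psi(\om,u)$ lsc for each $\om$'' $\Rightarrow$ ``$\Psi$ jointly measurable'' is not a theorem; the Castaing-representation sketch you give is circular, since building measurable selections of $\om\mapsto\mathrm{epi}\,\Psi(\om,\cdot)$ already requires knowing that local infima of $\Psi(\om,\cdot)$ over balls are measurable in $\om$, which is precisely the issue. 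Lower semicontinuity lets you bound $\Psi(\om,u)$ from \emph{above} by values on a dense countable set, not recover it.

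The paper circumvents both problems by abandoning the $L^1$-topology on $u$ and instead parametrising competitors by the measure $\mu:=Du\in\mathcal M^R_A$, equipped with the weak$^*$ topology. The key identity is that for $u\in SBV_{\mathrm{pc}}$ one has $\mu=[u]\otimes\nu_u\,\hs^{n-1}\mres S_u$ and $|\mu|=|[u]|\,\hs^{n-1}\mres S_u$, so $[u]\otimes\nu_u$ and $|[u]|$ are recovered as Besicovich densities $\mu^\rho(B_\rho(x)\cap A)$ and $|\mu^\rho|(B_\rho(x)\cap A)$. After encoding $g$ as a bounded function $\tilde g_k$ of the rank-one matrix $\zeta\otimes\nu$, one writes
\[
G(\om)(u,A)=\lim_{\eta\to0+}\lim_{\rho\to0+}\int_A\frac{\tilde g_k\big(\om,x,\mu^\rho(B_\rho(x)\cap A)\big)}{\max\{|\mu^\rho|(B_\rho(x)\cap A),\eta\}}\,d|\mu|(x).
\]
Each integrand here is $\T\otimes\B(A)\otimes\B(\mathcal M^R_A)$-measurable because $(x,\mu)\mapsto\mu^\rho(B_\rho(x)\cap A)$ and $(x,\mu)\mapsto|\mu^\rho|(B_\rho(x)\cap A)$ are Borel (the latter is even lsc), and a monotone-class lemma (Lemma~\ref{l:Measurability1} and Corollary~\ref{l:Measurability2}) shows that integrating such an integrand against $|\mu|$ produces a $\T\otimes\B(\mathcal M^R_A)$-measurable function of $(\om,\mu)$. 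No semicontinuity of $G(\om)(\cdot,A)$ is invoked anywhere.
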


The main difficulty in the proof of Proposition \ref{measurability} is that, although $\om\mapsto G(\om)(u,A)$ is clearly $\T$-measurable, $m^{\mathrm{pc}}_{G(\om)}(w,A)$  is defined as an infimum on an uncountable set. This difficulty is usually solved by means of the Projection Theorem, which requires the completeness of the probability space. It also requires joint measurability in $(\om,u)$ and some topological properties of the space on which the infimum is taken, like separability and metrisability. In our case (see \eqref{emmeG}) the infimum is taken on the space of all functions $u \in L^0(\R^n,\R^m)$ such that $\ u|_A\in SBV_{\mathrm{pc}}(A,\R^m)$ and $u=w$ near $\partial A$, and it is not easy to find a topology on this space with the above mentioned properties and such that  $(\om,u)\mapsto G(\om)(u,A)$ is jointly measurable. Therefore we have to attack the measurability problem in an indirect way, extending (an approximation of) $G(\om)(u,A)$ to  a  suitable subset of the space of bounded Radon measures, which turns out to be compact and metrisable in the weak$^*$ topology.

We start by introducing some notation that will be used later. For every every $A\in \A$ we denote by $\mathcal{M}_b(A,\R^{m\times n})$ the Banach space of all $ \R^{m\times n}$-valued Radon measures on $A$. This space is identified with the dual of the space $C_0(A,\R^{m\times n})$
of all $\R^{m\times n}$-valued continuous functions on $\overline A$ vanishing on $\partial A$. 
For every $R>0$ we set
$$
\mathcal M^R_A:=\{\mu \in \mathcal M_b(A,\R^{m\times n}) \colon |\mu|(A)\leq R\}, 
$$%
where $|\mu|$ denotes the variation of $\mu$ with respect to the Euclidean norm on $\R^{m\times n}$. On $\mathcal M^R_A$ we consider the topology induced by the weak$^*$ topology of $\mathcal{M}_b(A,\R^{m\times n})$.
Before starting the proof of Proposition~\ref{measurability}, we need two preliminary results.
\begin{lem}\label{l:Measurability1}
Let $(\Lambda, \mathcal S)$ be a measurable space, let $A\in\A$, let $R>0$, and let $h \colon \Lambda \times A \to \R$ be a bounded and $\mathcal S \otimes \B(A)$-measurable function. 
Let $H\colon \Lambda \times \mathcal M^R_A \to \R$ be defined by
\begin{equation}\label{l:acca1}
H(\lambda,\mu):=\int_A h(\lambda,x)\,d|\mu|(x).
\end{equation}
Then $H$ is $\mathcal S \otimes \B(\mathcal M^R_A)$-measurable.
\end{lem}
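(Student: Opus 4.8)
The plan is to deduce the joint measurability of $H$ from a functional monotone class argument in the variable $h$, once the dependence on $\mu$ is understood. The crucial point is that, although the total variation of a vector measure is \emph{not} weak$^*$ continuous on $\mathcal M^R_A$, it \emph{is} weak$^*$ lower semicontinuous, and the same holds for its restriction to open sets. Concretely, first I would record that for every open $B\subseteq A$ one has the duality representation
\[
|\mu|(B)=\sup\Big\{\int_A \varphi\cdot d\mu \ : \ \varphi\in C_c(B,\R^{m\times n}),\ |\varphi|\le 1\Big\},
\]
where $\cdot$ is the Euclidean (Frobenius) product on $\R^{m\times n}$. Since $C_c(B,\R^{m\times n})\subseteq C_0(A,\R^{m\times n})$, each map $\mu\mapsto\int_A\varphi\cdot d\mu$ is weak$^*$ continuous on $\mathcal M^R_A$, so $\mu\mapsto|\mu|(B)$ is a supremum of continuous functions, hence weak$^*$ lower semicontinuous and in particular $\B(\mathcal M^R_A)$-measurable.

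The second step upgrades this to arbitrary Borel sets. Let $\mathcal D:=\{B\in\B(A):\mu\mapsto|\mu|(B)\text{ is }\B(\mathcal M^R_A)\text{-measurable}\}$. It contains $A$ (which is open), it is stable under relative complements because $|\mu|(A\setminus B)=|\mu|(A)-|\mu|(B)$ is a difference of two finite measurable functions of $\mu$ (here we use $|\mu|(A)\le R<+\infty$), and it is stable under countable disjoint unions because $|\mu|\big(\bigcup_i B_i\big)=\sum_i|\mu|(B_i)$ is a pointwise limit of measurable functions. Thus $\mathcal D$ is a Dynkin system containing the $\pi$-system of open subsets of $A$, so by the $\pi$-$\lambda$ theorem $\mathcal D=\B(A)$. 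Consequently, for $S\in\mathcal S$ and $B\in\B(A)$ the function $(\lambda,\mu)\mapsto\chi_S(\lambda)\,|\mu|(B)$ is a product of an $\mathcal S$-measurable function of $\lambda$ and a $\B(\mathcal M^R_A)$-measurable function of $\mu$, hence $\mathcal S\otimes\B(\mathcal M^R_A)$-measurable; equivalently, the conclusion of the lemma holds for $h=\chi_{S\times B}$.

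For the third step, let $\mathcal H$ be the collection of bounded $\mathcal S\otimes\B(A)$-measurable functions $h$ for which the associated $H$ in \eqref{l:acca1} is $\mathcal S\otimes\B(\mathcal M^R_A)$-measurable. For fixed $\mu$ the measure $|\mu|$ is fixed, so $h\mapsto H$ is linear and $\mathcal H$ is a vector space; it contains the constant $1$, since then $H(\lambda,\mu)=|\mu|(A)$, which is measurable by the first step. If $h_k\in\mathcal H$ and $0\le h_k\uparrow h$ with $h$ bounded, then for each fixed $(\lambda,\mu)$ the scalar monotone convergence theorem applied to the finite positive measure $|\mu|$ on $A$ gives $H_k(\lambda,\mu)\to H(\lambda,\mu)$, so $H$ is measurable as a pointwise limit and $h\in\mathcal H$. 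By the second step, $\mathcal H$ contains the indicators of every set in the $\pi$-system $\{S\times B:S\in\mathcal S,\ B\in\B(A)\}$, which generates $\mathcal S\otimes\B(A)$. By the functional form of the monotone class theorem (Dynkin's multiplicative system theorem), $\mathcal H$ contains every bounded $\mathcal S\otimes\B(A)$-measurable function, which is exactly the assertion.

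I expect the only genuinely delicate point to be the first step, namely the weak$^*$ lower semicontinuity of $\mu\mapsto|\mu|(B)$ for open $B$ and its extension to Borel $B$: one must resist the temptation to argue that $\mu\mapsto\int_A h\,d|\mu|$ is weak$^*$ \emph{continuous} for continuous $h$ (it is not, since the variation is only lower semicontinuous), and instead route the argument through the duality formula, the Dynkin system $\mathcal D$, and the monotone class theorem. The remaining steps are routine bookkeeping.
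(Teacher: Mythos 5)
Your proof is correct, and it rests on the same engine as the paper's -- the functional Monotone Class Theorem applied to the class $\mathcal H$ of admissible integrands $h$ -- but you verify the base case along a different route. The paper seeds the monotone class with products $h(\lambda,x)=\varphi(\lambda)\psi(x)$, $\psi\in C^0_c(A)$, and dismisses their admissibility as clear; this implicitly requires knowing that $\mu\mapsto\int_A\psi\,d|\mu|$ is Borel on $\mathcal M^R_A$ for continuous $\psi$, which (for $\psi\ge 0$) again comes from writing it as a supremum of weak$^*$ continuous linear functionals, i.e.\ exactly the lower-semicontinuity mechanism you make explicit. You instead seed the class with indicators $\chi_{S\times B}$, which forces you to first prove Borel measurability of $\mu\mapsto|\mu|(B)$ for \emph{all} Borel $B$, and you do this correctly by combining the duality formula on open sets with a Dynkin $\pi$-$\lambda$ argument (using crucially that $|\mu|(A)\le R<\infty$ to subtract). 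The trade-off is that your version spells out the one genuinely analytic ingredient that the paper leaves tacit, at the cost of an extra set-theoretic layer; the paper's choice of continuous generators lets it skip the $\pi$-$\lambda$ step entirely because $C^0_c(A)$ functions already generate $\B(A)$ multiplicatively. Both arguments are complete and the remaining steps (vector space structure, closure under bounded monotone limits via monotone convergence for the finite measure $|\mu|$) coincide.
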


\begin{proof}
Let $\mathcal{H}$ be the set of all bounded, $\mathcal S \otimes \B(A)$-measurable functions $h$ such that the function 
$H$ defined by \eqref{l:acca1} is $\mathcal S \otimes \B(\mathcal M^R_A)$-measurable. Clearly $\mathcal{H}$ is a monotone class (see, e.g., \cite[Definition 4.12]{AliBorder}) which contains all the functions of the form $h(\lambda,x)=\varphi(\lambda)\psi(x)$ with $\varphi$ bounded and $\mathcal S$-measurable and $\psi \in C^0_c(A)$. Then the functional form of the Monotone Class Theorem (see, e.g., \cite[Chapter I, Theorem 21]{Dellacherie}) implies that $\mathcal{H}$  coincides with the class of all bounded and $\mathcal S\otimes\B(A)$-measurable functions and this concludes the proof.
\end{proof}

\begin{cor}\label{l:Measurability2}
Let $A\in\A$, let $R>0$, and let $h \colon \Omega \times A \times \mathcal M^R_A  \to \R$ be a bounded and $\T \otimes \B(A) \otimes \B(\mathcal M^R_A)$-measurable function. Let $H\colon \Omega \times \mathcal M^R_A \to \R$ be defined by
\begin{equation*}
H(\omega,\mu):=\int_A h(\om,x,\mu)\,d|\mu|(x).
\end{equation*}
Then $H$ is $\T \otimes \B(\mathcal M^R_A)$-measurable. 
\end{cor}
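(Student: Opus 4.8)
The plan is to reduce Corollary~\ref{l:Measurability2} to Lemma~\ref{l:Measurability1} by treating the measure $\mu$ that occurs as an argument of $h$ as an extra parameter, and then collapsing two copies of $\mathcal M^R_A$ onto the diagonal. Concretely, first I would set $\Lambda := \Omega \times \mathcal M^R_A$ equipped with the $\sigma$-algebra $\mathcal S := \T \otimes \B(\mathcal M^R_A)$, and define $\hat h \colon \Lambda \times A \to \R$ by $\hat h\big((\omega,\mu),x\big) := h(\omega,x,\mu)$. Since $\hat h$ is obtained from $h$ by a bimeasurable permutation of the three factors $\Omega$, $A$, $\mathcal M^R_A$, it is bounded and $\mathcal S \otimes \B(A)$-measurable.

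Next, applying Lemma~\ref{l:Measurability1} with $(\Lambda,\mathcal S)$ as above and with $h$ there replaced by $\hat h$, I obtain that the function $\hat H \colon \Lambda \times \mathcal M^R_A \to \R$ given by
\[
\hat H\big((\omega,\mu),\mu'\big) := \int_A \hat h\big((\omega,\mu),x\big)\,d|\mu'|(x) = \int_A h(\omega,x,\mu)\,d|\mu'|(x)
\]
is $\mathcal S \otimes \B(\mathcal M^R_A)$-measurable, i.e.\ $\big(\T \otimes \B(\mathcal M^R_A)\big) \otimes \B(\mathcal M^R_A)$-measurable when regarded as a function of $(\omega,\mu,\mu') \in \Omega \times \mathcal M^R_A \times \mathcal M^R_A$.

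Finally, I would observe that $H(\omega,\mu) = \hat H\big((\omega,\mu),\mu\big)$, so that $H = \hat H \circ \iota$, where $\iota \colon \Omega \times \mathcal M^R_A \to \Omega \times \mathcal M^R_A \times \mathcal M^R_A$ is the diagonal embedding $\iota(\omega,\mu) := (\omega,\mu,\mu)$. Since a map into a (finite) product measurable space is measurable precisely when each of its components is, and the three components of $\iota$ (namely $(\omega,\mu) \mapsto \omega$, and twice $(\omega,\mu) \mapsto \mu$) are $\T$- resp.\ $\B(\mathcal M^R_A)$-measurable, the map $\iota$ is $\T \otimes \B(\mathcal M^R_A)$-to-$\T \otimes \B(\mathcal M^R_A) \otimes \B(\mathcal M^R_A)$-measurable. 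Composing with $\hat H$ yields that $H$ is $\T \otimes \B(\mathcal M^R_A)$-measurable, as claimed.

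The argument is essentially bookkeeping, so I do not expect a genuine obstacle; the only point that deserves a little care is the legitimacy of the ``diagonalisation'' step, which rests on the elementary fact that measurability into a finite product $\sigma$-algebra can be checked componentwise. Note in particular that, unlike in Lemma~\ref{l:Measurability1} and in the rest of the Appendix, no metrisability or compactness of $\mathcal M^R_A$ is needed at this stage, since everything is purely measure-theoretic once Lemma~\ref{l:Measurability1} is available.
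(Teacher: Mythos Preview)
Your argument is correct and is essentially identical to the paper's own proof: the paper also applies Lemma~\ref{l:Measurability1} with $\Lambda=\Omega\times\mathcal M^R_A$ and $\mathcal S=\T\otimes\B(\mathcal M^R_A)$ to obtain a measurable augmented functional $\tilde H(\omega,\nu,\mu)=\int_A h(\omega,x,\nu)\,d|\mu|(x)$, and then concludes by composing with the diagonal map $(\omega,\mu)\mapsto(\omega,\mu,\mu)$, whose measurability is checked componentwise. Your only addition is the explicit remark that no topological properties of $\mathcal M^R_A$ are needed at this stage, which is true but unsaid in the paper.
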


\begin{proof}
As a preliminary step, we consider the augmented functional $\tilde H\colon \Omega \times \mathcal \mathcal M^R_A \times \mathcal M^R_A \to \R$ defined by
\begin{equation*}
\tilde H(\omega,\nu,\mu):=\int_A h(\om,x,\nu)\,d|\mu|(x).
\end{equation*}
By applying Lemma \ref{l:Measurability1} to $\tilde H$, with $\Lambda = \Omega\times \mathcal M^R_A$,  $\lambda = (\om,\nu)$, and $\mathcal S= \T  \otimes \B(\mathcal M^R_A)$, we deduce that $\tilde H$ is $\T \otimes \B(\mathcal M^R_A)\otimes \B(\mathcal M^R_A)$-measurable.

The claim then follows by noting that
$
H(\omega,\mu) = \tilde H(\omega,\mu,\mu)
$
and by observing that $(\omega,\mu)\mapsto (\omega,\mu,\mu)$ is measurable for the $\sigma$-algebras $\T \otimes \B( \mathcal M^R_A )$ and $\T \otimes \B( \mathcal M^R_A)\otimes \B(\mathcal M^R_A)$.
\end{proof}

We are now ready to give the proof of Proposition \ref{measurability}.

\begin{proof}[Proof of Proposition \ref{measurability}]
For every $k\in\N$ let $m^k_{G(\om)}(w,A)$ be as in \eqref{emme-kappa}, with $G$ replaced by $G(\om)$. 
In view of \eqref{approx-min-pb-G}, the function $\om\mapsto m_{G(\om)}(w,A)$ is $\widehat\T$-measurable
if  
\begin{equation}\label{mk measurable}
\om\mapsto m^k_{G(\om)}(w,A)\quad\text{is } \widehat\T \text{-measurable}
\end{equation}
 for $k$ sufficiently large. To prove this property
 we fix $k>\|w\|_{L^\infty(A,\R^m)}$ and   observe that 
there is a one-to-one correspondence between the space of rank one $m{\times}n$ matrices and the quotient of   $\R^m_0\times \mathbb{S}^{n-1}$ with respect to the equivalence relation $(\zeta,\nu)\sim(-\zeta,-\nu)$.
Therefore, thanks to $(g6)$ and $(g7)$, for every $k\in\N$ we can define a bounded $\T\otimes\B(A)\otimes\B^{m{\times}n}$-measurable function $\tilde g_k\colon \Om\times A\times \R^{m{\times}n}\to \R$ such that 
\begin{equation*} 
\tilde g_k(\om,x,\zeta \otimes \nu)=g(\om,x,\zeta, \nu)
\quad \text{ for every } \om\in \Om, x\in A, \zeta \in \R^m_0 \text{ with }  |\zeta|\le 2k , \nu \in \mathbb S^{n-1}.
\end{equation*}
This implies that
\begin{equation}\label{G g tilde}
G(\om)(u,A)=\int_{S_u\cap A} g(\om,x,[u], \nu_u)\,d\mathcal H^{n-1}
= \int_{S_u\cap A} \tilde g_k(\om,x,[u]\otimes \nu_u)\,d\mathcal H^{n-1}
\end{equation}
for every $u\in SBV
(A,\R^m) \cap L^\infty(A,\R^m)$ with $\|u\|_{L^\infty(A,\R^m)}\le k$. 

Let $\alpha:=c_5/c_4\, (1+2 \|w\|_{L^\infty(A,\R^m)}) \,\mathcal H^{n-1}(S_w\cap A)$ as in Remark \ref{min-pc-bdd}. 
Given an increasing sequence $(A_j)$ of open sets, with $A_j\subset\subset A$  and $A_j \nearrow A$, we define
\begin{multline*}
\mathcal{X}^k_j:= \{u\in L^0(\R^n,\R^m) \colon u|_A \in SBV_{\mathrm{pc}}(A,\R^m) \cap L^\infty(A,\R^m),\
\|u\|_{L^\infty(A,\R^m)}\leq k,
\\ 
\mathcal{H}^{n-1}(S_u\cap A) \leq \alpha,\ u=w \; \textrm{in } A\setminus A_j \}.
\end{multline*}%
By \eqref{emme-kappa} we have
$$
\lim_{j\to +\infty} \inf_{u\in \mathcal{X}^k_j}G(\om)(u,A) = m^k_{G(\om)}(w,A).
$$
Therefore, to prove  \eqref{mk measurable}, and hence  the $\widehat\T$-measurability of $\om\mapsto m^{\mathrm{pc}}_{G(\om)}(w,A)$ it is enough to show that 
\begin{equation}\label{e:mj}
\om\mapsto \inf_{u\in \mathcal{X}^k_j}G(\om)(u,A) \textrm{ is } \;  \widehat\T\textrm{-measurable}.
\end{equation}
This will be obtained by using the Projection Theorem. To this end we consider $ \mathcal{X}^k_j$ as a topological space, with the topology induced by the weak$^\ast$-topology of $BV(A,\R^m)$, which is metrisable on $\mathcal{X}^k_j$. Indeed $BV(A,\R^m)$ is the dual of a separable space (see \cite[Remark 3.12]{AFP}), and $\mathcal{X}^k_j$ is bounded with respect to the $BV(A,\R^m)$-norm, since every $u\in \mathcal{X}^k_j$ satisfies 
$$
\|u\|_{BV(A,\R^m)} = \|u\|_{L^1(A,\R^m)}+|Du|(A) \leq k\mathcal{L}^n(A)+ 2k\alpha.
$$
Further, by virtue of Ambrosio's Compactness Theorem for $SBV(A,\R^m)$ (see \cite[Theorem 4.8]{AFP}), the topological space $\mathcal{X}^k_j$ is compact.
 
Let $\pi_\Om: \Om\times \mathcal{X}^k_j \to \Om$ be the canonical projection of $\Om\times  \mathcal{X}^k_j $ onto $\Om$. For every $t\in \R$ we have 
\begin{align*}
\left\{\om \in \Om: \inf_{u\in  \mathcal{X}^k_j}  G(\om)(u,A)<t\right\} = \pi_\Om\left(\{(\om,u)\in \Om\times  \mathcal{X}^k_j:  G(\om)(u,A)<t\}\right).
\end{align*}
By the Projection Theorem (see, e.g., \cite[Theorem III.13 and 33(a)]{Dellacherie}),
\eqref{e:mj} follows if we show that
\begin{equation}\label{e:joint_meas0}
(\om, u) \mapsto G(\om)(u,A) \; \textrm{ is } \;  \T\otimes {\B}(\mathcal{X}^k_j)\textrm{-measurable},
\end{equation}%
hence $\widehat\T\otimes {\B}(\mathcal{X}^k_j)$-measurable.

To prove this property we shall use \eqref{G g tilde}. By a Monotone Class argument (see the proof of Lemma~\ref{l:Measurability1})
we can assume, without loss of generality, 
that for every $\om \in \Omega$ and every $x\in \R^n$ the function $\xi \mapsto \tilde g_k(\omega, x, \xi)$ is continuous. 

In  \eqref{G g tilde} it is convenient to express $ [u]\otimes \nu_u$ and the restriction of $\mathcal H^{n-1}$ to $S_u$ by means of
the measure $\mu:=Du$.
By  \cite[Theorems~3.77 and~3.78]{AFP} for every  $B\in \B(A)$ we have
\begin{equation}\label{mu on Su}
\mu ( B)=\int_{ S_u\cap B} [u]\otimes \nu_u \,d\mathcal H^{n-1}\quad\text{and}\quad
|\mu| ( B)=\int_{ S_u\cap B}| [u]| \,d\mathcal H^{n-1},
\end{equation}
hence
\begin{equation}\label{Hn-1 and mu}
 \mathcal H^{n-1}(B)=\int_{ S_u\cap B}\frac{1}{| [u]|}\,d|\mu|.
\end{equation}

To write \eqref{G g tilde}  as a limit of measurable functions, for every $\mu \in \mathcal{M}_b(A,\R^{m\times n})$ and $\rho>0$ we consider the measure
$\mu^\rho\in \mathcal{M}_b(A,\R^{m\times n})$ defined by
\begin{equation*}
\mu^\rho (B):= \frac{\mu (B)}{\omega_{n-1}\rho^{n-1}} \quad \text{for every }B\in \B(A),
\end{equation*}%
where $\omega_{n-1}$ is the measure of the unit ball in $\R^{n-1}$. If $u\in SBV_{\mathrm{pc}}(A,\R^m)$ and $\mu=Du$, by the Besicovich Derivation Theorem and by the rectifiability of $S_u$ (see \cite[Theorems 2.22, 2.83, and 3.78]{AFP}) we deduce from \eqref{mu on Su} that,  when $\rho\to 0+$,
\begin{align}\label{limits on balls 1}
\mu^\rho ( B_\rho(x)\cap A)&\to( [u]\otimes \nu_u)(x)\quad\text{for }\mathcal H^{n-1}\text{-a.e.\ }x\in  S_u\cap A\,,
\\
|\mu^\rho |( B_\rho(x)\cap A)&\to |[u](x)|\quad\text{for }\mathcal H^{n-1}\text{-a.e.\ }x\in  S_u\cap A\,
\label{limits on balls 2}.
\end{align}

Since $\xi \mapsto \tilde g_k(\omega, x, \xi)$ is continuous and bounded uniformly with respect to $x$, by the Dominated Convergence Theorem it follows from \eqref{Hn-1 and mu}, \eqref{limits on balls 1}, and \eqref{limits on balls 2} that for every $u\in  \mathcal{X}^k_j$ we have 
\begin{equation}\label{lim eta rho}
G(\om)(u,A) = \lim_{\eta\to0+}  \lim_{\rho \to 0+} 
\int_A\frac{\tilde g_k\big(\om,x, \mu^\rho (A\cap B_\rho(x))\big)}{\max\{|\mu^\rho| (A\cap B_\rho(x)), \eta \}}\, d|\mu|(x),
\end{equation}%
with $\mu:=D u$. 
 Let $R:=2k\alpha$.  Since the map $u\mapsto Du$ from $BV(A,\R^m)$ into  $\mathcal{M}_b(A,\R^{m\times n})$ is continuous for the weak$^*$ topologies and the image of 
 $\mathcal{X}^k_j$ under this map is contained in $\mathcal M^R_A$, the claim in \eqref{e:joint_meas0} is an obvious consequence of \eqref{lim eta rho} and of the following property: for every $\eta>0$ and $\rho >0$ the function 
\begin{equation}\label{meas-1}
(\om, \mu) \mapsto \int_A\frac{\tilde g_k \big(\om,x, \mu^\rho (A\cap B_\rho(x))\big)}{\max\{ |\mu^\rho| (A\cap B_\rho(x)), \eta \}}\, d|\mu|(x)  \; \textrm{ is } \;  \T\otimes {\B}(\mathcal M^R_A)\textrm{-measurable}.
\end{equation}
To prove this property we observe that 
\begin{equation}\label{semicontinuity}
(x,\mu) \mapsto |\mu^\rho|(A\cap B_\rho(x))\quad \text{is (jointly) lower semicontinuous on }A\times \mathcal M^R_A\,.
\end{equation}
This is a consequence of the equality 
\begin{equation*}
|\mu|(B_\rho(x)\cap A)= \sup\left\{ \int_{A}\varphi(y-x) d \mu (y): \varphi \in C_c^1(B_\rho(0),\R^{m{\times}n}),\ |\varphi|\leq 1 \right\}
\end{equation*}
and of the  (joint) continuity of
$
(x,\mu) \mapsto \int_{A}\varphi(y-x) d \mu (y)
$
on $A\times \mathcal M^R_A$.

We also observe that the $\R^{m{\times}n}$-valued function
\begin{equation}\label{measurability 77}
(x,\mu) \mapsto \mu^\rho(A\cap B_\rho(x))\quad \text{is }\B(A)\otimes {\B}(\mathcal M^R_A)\text{-measurable.}
\end{equation}
Indeed, given a nondecreasing sequence $(\varphi_j)$ of nonnegative functions in $C_c^1(B_\rho(0))$ converging to~$1$, we have
$$
\mu^\rho(A\cap B_\rho(x))=  \frac{1}{\om_{n-1}\rho^{n-1}}  \lim_{j\to+\infty}  \int_{A}\varphi_j(y-x) d \mu (y)\,,
$$
and each function $(x,\mu) \mapsto \int_{A}\varphi_j(y-x) d \mu (y)$
is (jointly) continuous on $A\times \mathcal M^R_A$.
Since $\tilde g_k$ is $\T\otimes\B(A)\otimes\B^{m{\times}n}$-measurable,
from \eqref{semicontinuity} and \eqref{measurability 77} we obtain that
\begin{equation*}
(\om, x, \mu) \mapsto \frac{\tilde g_k \big(\om,x, \mu^\rho (A\cap B_\rho(x))\big)}{ \max\{|\mu^\rho| (A\cap B_\rho(x)), \eta \}}  \; \textrm{ is } \;  
\T\otimes\B(A) \otimes{\B}(\mathcal M^R_A )\textrm{-measurable},
\end{equation*}
and \eqref{meas-1} follows from Corollary \ref{l:Measurability2}.
This concludes the proof.
\end{proof}

\section*{Acknowledgments}
 F. Cagnetti wishes to thank Panagiotis E. Souganidis for suggesting to consider the stochastic counterpart of \cite{CS}.  
The authors are grateful to Marco Cicalese who drew their attention to the argument in \cite[Proof of Theorem 5.5, Step 2]{Alciru} (see also the previous result by Braides and Piatnitski \cite[Proposition 2.10]{BP}) which is crucial in the proof of Theorem \ref{Ergodic-thm-surf2}.
F. Cagnetti was supported by the EPSRC under the Grant EP/P007287/1 ``Symmetry of Minimisers in Calculus of Variations''.
The research of G. Dal Maso was partially funded by the European Research Council under
Grant No. 290888 ``Quasistatic and Dynamic Evolution Problems in Plasticity
and Fracture''. G. Dal Maso is a member of the Gruppo Nazionale per l'Analisi Matematica, la Probabilit\`a e le loro Applicazioni (GNAMPA) of the Istituto Nazionale di Alta Matematica (INdAM).
L. Scardia acknowledges support by the EPSRC under the Grant EP/N035631/1 ``Dislocation patterns 
beyond optimality''.


\end{document}